
\documentclass[a4paper,11pt]{amsart}

\usepackage{latexsym}
\usepackage{amssymb,amsmath}
\usepackage{graphicx}
\usepackage{url}
\usepackage[vcentermath]{youngtab}
  
\usepackage{booktabs}

\newtheorem{theorem}{Theorem}[section]
\newtheorem*{theorem*}{Theorem}
\newtheorem{corollary}[theorem]{Corollary}

\newtheorem{proposition}[theorem]{Proposition}

\newtheorem{lemma}[theorem]{Lemma}

\theoremstyle{definition}
\newtheorem*{definition}{Definition}

\newcommand{\N}{\mathbf{N}}

\newcommand{\Q}{\mathbf{Q}}
\newcommand{\R}{\mathbf{R}}


\renewcommand{\O}{\mathrm{O}}
\newcommand{\e}{\mathrm{e}}

\renewcommand{\epsilon}{\varepsilon}


\DeclareMathOperator{\Sym}{Sym}
\DeclareMathOperator{\BSym}{BSym}
\DeclareMathOperator{\DSym}{DSym}

\DeclareMathOperator{\Tr}{Tr}
\DeclareMathOperator{\sgn}{sgn}
\DeclareMathOperator{\Inf}{Inf}

\newcommand{\Ind}{\big\uparrow}
\newcommand{\Res}{\big\downarrow}
\newcommand{\ind}{\!\uparrow}
\newcommand{\res}{\!\downarrow}

\linespread{1.1395}


\newcommand{\mfrac}[2]{{\textstyle\frac{#1}{#2}}}


\newcounter{thmlistcnt}
	{\setcounter{thmlistcnt}{0}%
	\begin{list}{\emph{(\roman{thmlistcnt})}}{%
		\usecounter{thmlistcnt}%
		\setlength{\topsep}{0pt}%
		\setlength{\leftmargin}{0pt}%
		\setlength{\itemsep}{0pt}%
		\setlength{\labelwidth}{17pt}
		\setlength{\itemindent}{30pt}}%
	}%
	{\end{list}}%

\newcommand{\stir}[2]{\genfrac{\{}{\}}{0pt}{}{#1}{#2}}
\newcommand{\id}{\mathrm{id}}
\newcommand{\Cl}{\mathrm{Cl}}
\newcommand{\D}{\mathrm{Des}}
\renewcommand{\theta}{\vartheta}
\newcommand{\Fix}{\mathrm{Fix}}

\newcommand{\sh}{\mathrm{sh}}

\newcommand{\BB}{B^{\dagger}} 
\newcommand{\BBp}{B^{\dagger\prime}} 
\newcommand{\SB}{S^\dagger} 
\newcommand{\SBp}{S^{\dagger\prime}} 
\newcommand{\MB}{M^\dagger} 
\newcommand{\MBp}{M^{\dagger\prime}} 


\newcommand{\SD}{S^\ddagger}
\newcommand{\SDp}{S^{\ddagger\prime}}
\newcommand{\MD}{M^\ddagger}
\newcommand{\MDp}{M^{\ddagger\prime}}

\newcommand{\sep}{,}

\usepackage{caption}
\captionsetup{width=.95\textwidth}
\captionsetup{font=small}

\begin{document}
\begin{abstract}
Let $B_t(n)$ be the number of set partitions of a set of size~$t$ into at most $n$ parts
and let $B'_t(n)$ be the number of
set partitions of $\{1,\ldots, t\}$ into at most $n$ parts
such that no part contains both $1$ and~$t$ or both $i$ and $i+1$ for any $i \in \{1,\ldots,t-1\}$.
We give two new combinatorial interpretations of the numbers $B_t(n)$ and $B'_t(n)$ using sequences
of random-to-top shuffles, 
and sequences of box moves
on the Young diagrams of partitions. Using these ideas we obtain a very short proof of a
generalization of a result of Phatarfod on the eigenvalues of the random-to-top shuffle.
 We also prove analogous
results for random-to-top shuffles that may flip certain cards.
The proofs use the Solomon descent algebras of Types A, B and~D. We give
generating functions and asymptotic results for all the combinatorial quantities
studied in this paper.
\end{abstract}

\keywords{Bell number, Stirling number, symmetric group, partition, Young diagram, random-to-top shuffle,
top-to-random shuffle}

\title[Bell numbers and partition moves]{Bell numbers, partition moves and the eigenvalues of the random-to-top shuffle in Dynkin Types A, B and D}
\date{\today}
\author{John R.~Britnell} 
\address{Department of Mathematics, Imperial College London, South Kensington Campus, London SW7 2AZ, United Kingdom}
\email{j.britnell@imperial.ac.uk}

\author{Mark Wildon}
\address{Department of Mathematics, Royal Holloway, University of London, Egham, Surrey TW20 0EX, United Kingdom}
\email{mark.wildon@rhul.ac.uk}
\maketitle
\thispagestyle{empty}

\section{Introduction}

For $t$, $n \in \N_0$, let $B_t(n)$ be the number of set partitions of $\{1,\ldots, t\}$
into at most $n$ parts. If $n \ge t$ then $B_t(n)$ is the Bell number $B_t$;
the difference $B_t(n)-B_t(n-1)$ is $\stir{t}{n}$, the Stirling number of the second kind.
Let $B'_t(n)$ be the number of set partitions of $\{1,\ldots, t\}$ into at most $n$ parts
such that no part contains both $1$ and~$t$ or both $i$ and $i+1$ for any $i \in \{1,\ldots,t-1\}$.
%
%

The first object of this paper
is to give two combinatorial interpretations of the numbers $B_t(n)$
and $B'_t(n)$, one involving certain sequences of random-to-top shuffles, and another
involving
sequences of box removals and additions on the Young diagrams of partitions.
The first of these interpretation is justified by means of an explicit bijection. The second interpretation is considerably deeper, and its justification is less direct: our argument requires
the Branching Rule for representations of the symmetric group $\Sym_n$, and
a basic result from the theory of
Solomon's descent algebra.
Using these ideas we obtain a very short
proof of a generalization of a result due to Phatarfod~\cite{Phatarfod} on the eigenvalues of
the random-to-top shuffle.

We also state and prove analogous results for
random-to-top shuffles that may flip the moved card from face-up to face-down, using
the descent algebras associated to the Coxeter groups of Type~B and~D. In doing so, we introduce analogues of the Bell numbers corresponding to these types; these appear not to have been studied previously.
We give generating functions, asymptotic formulae and numerical relationships between these numbers,
and the associated Stirling numbers, in \S6 below.

We now define the quantities which we shall
show are equal to either $B_t(n)$ or $B'_t(n)$.

\begin{definition}
For $m \in \N$, let $\sigma_m$ denote the $m$-cycle $(1,2,\ldots, m)$.  A \emph{random-to-top
shuffle} of $\{1,\ldots, n\}$ is one of the $n$ permutations $\sigma_1, \ldots, \sigma_n$.
Let $S_t(n)$ be the number of sequences of
$t$ random-to-top shuffles whose product is the identity permutation.
Define $S'_t(n)$ analogously, excluding the identity permutation $\sigma_1$.
\end{definition}

We think of the permutations $\sigma_1, \ldots, \sigma_n$ as acting on the $n$ positions
in a deck of $n$ cards;
thus $\sigma_m$ is the permutation moving the card in position $m$ to
position $1$ at the top of the deck. If the cards are labelled by a set $C$
and the card in position $m$ is labelled by $c \in C$ then we
say that $\sigma_m$ \emph{lifts} card $c$.

We represent partitions by Young diagrams. Motivated by the Branching Rule,
we say that a box in a Young diagram is \emph{removable}
if removing it leaves the Young diagram of a partition; a position to which a box may be
added to give a Young diagram of a partition is said to be \emph{addable}.

\begin{definition}
A \emph{move} on a partition consists of the removal of a removable box and
then addition in an addable position of a single box. A move is \emph{exceptional} if it consists
of the removal and then addition in the same place of the lowest removable box.
Given partitions $\lambda$ and $\mu$ of the same size
let $M_t(\lambda, \mu)$ be the number of sequences
of~$t$ moves that start at $\lambda$ and finish at $\mu$.
Let $M_t'(\lambda,\mu)$ be defined analogously, considering
only non-exceptional moves. For $n \in \N_0$, let $M_t(n) = M_t\bigl( (n), (n) \bigr)$
and let $M'_t(n) = M'_t\bigl( (n), (n) \bigr)$.
\end{definition}

We note that if the Young diagram of $\lambda$ has exactly $r$ removable boxes
then $M_1(\lambda,\lambda) = r$ and $M'_1(\lambda,\lambda) = r-1$. 
For example, $(2,1)$ has moves to~$(2,1)$, in two ways, and also to $(3)$, $(1^3)$.

Our first result is as follows.

\begin{theorem}\label{thm:main}
For all $t$, $n \in \N_0$ we have $B_t(n) = S_t(n) = M_t(n)$ and $B'_t(n) = S'_t(n) = M'_t(n)$.
\end{theorem}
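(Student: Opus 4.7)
The plan has two main parts. First, an explicit bijection shows $S_t(n) = B_t(n)$; second, a representation-theoretic argument via the Branching Rule gives $M_t(n) = B_t(n)$. The primed identities then follow from these by inclusion-exclusion.

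For $S_t(n) = B_t(n)$, given a sequence $(\sigma_{m_1}, \ldots, \sigma_{m_t})$ of random-to-top shuffles whose product is the identity, I label step $i$ by the card lifted at that step (the card then at position $m_i$) and declare $i \sim j$ iff the same card is lifted, yielding a set partition of $\{1, \ldots, t\}$ with at most $n$ blocks. For the inverse, given such a partition, I read the indices $t, t-1, \ldots, 1$ in turn and label the blocks $B_1, B_2, \ldots$ in the order their maximum element is encountered (so the block containing $t$ is $B_1$, the block with the next largest maximum is $B_2$, and so on), associate $B_j$ with card~$j$, and take $m_i$ to be the current position of the card associated with the block containing $i$. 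The resulting product really is the identity because the rule ``lifted card goes to position $1$'' forces the $k$ labelled cards to end at positions $1, 2, \ldots, k$ respectively, while the unused cards never move. For the primed version, $\sigma_{m_i} = \sigma_1$ iff the card at position~$1$ (the most recently lifted card) is lifted again, which is exactly the condition that step $i$ lies in the same block as step $i-1$; the boundary case $i = 1$ interacts cyclically with step $t$, giving the ``no $\{1, t\}$'' condition.

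For $M_t(n) = B_t(n)$, the Branching Rule identifies $M_t(\lambda, \mu)$ with the multiplicity of $S^\mu$ in $(\mathrm{Ind}\,\mathrm{Res})^t S^\lambda$, so $M_t(n) = \langle S^{(n)}, (\mathrm{Ind}\,\mathrm{Res})^t S^{(n)}\rangle_{\Sym_n}$. By Frobenius reciprocity this equals $\langle S^{(n-1)}, (\mathrm{Res}\,\mathrm{Ind})^{t-1} S^{(n-1)}\rangle_{\Sym_{n-1}}$. The basic fact from Solomon's descent algebra that I would invoke is the Mackey-type decomposition
\[ \mathrm{Res}_{\Sym_{n-1}}^{\Sym_n}\mathrm{Ind}_{\Sym_{n-1}}^{\Sym_n} \;\cong\; \mathrm{id} \;\oplus\; \mathrm{Ind}_{\Sym_{n-2}}^{\Sym_{n-1}}\mathrm{Res}_{\Sym_{n-2}}^{\Sym_{n-1}} \]
of endofunctors on $\Sym_{n-1}$-modules, equivalent to the Young's lattice commutation $DU - UD = I$ for the up- and down-operators. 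Since $\mathrm{id}$ commutes with $\mathrm{Ind}'\mathrm{Res}'$, binomial expansion gives the recurrence $M_t(n) = \sum_{k=0}^{t-1}\binom{t-1}{k} M_k(n-1)$. The same recurrence for $B_t(n)$ is elementary, obtained by conditioning on the block containing~$1$: if this block has $t - 1 - k$ other elements, there are $\binom{t-1}{k}$ choices and $B_k(n-1)$ partitions of the remaining $k$ elements. Induction on $t$, with base case $M_0(n) = B_0(n) = 1$, completes the proof.

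The primed identities follow by inclusion-exclusion. Since every partition of $n \ge 1$ has a removable box, the exceptional self-loop is present at every vertex of the move graph on partitions of~$n$, so the non-exceptional adjacency matrix is $A - I$; binomial expansion gives $M'_t(n) = \sum_{k=0}^t (-1)^k\binom{t}{k} M_{t-k}(n)$. The matching formula $B'_t(n) = \sum_{k=0}^t (-1)^k\binom{t}{k} B_{t-k}(n)$ comes from inclusion-exclusion on the $t$ cyclic edges of $C_t$ forbidden by the definition of $B'_t(n)$: a subset of $k < t$ edges is a forest that collapses the vertex set to $t - k$ components, contributing $B_{t-k}(n)$ partitions, while all $t$ edges collapse everything to one component, absorbed into the formula via $B_0(n) = 1$. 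Combined with the established $M_t(n) = B_t(n)$ this gives $M'_t(n) = B'_t(n)$, and $S'_t(n) = B'_t(n)$ is immediate from the bijection. The main obstacle is the Mackey/branching decomposition $\mathrm{Res}\,\mathrm{Ind} \cong \mathrm{id}\oplus\mathrm{Ind}'\mathrm{Res}'$; once this is in hand, the recurrence and all remaining identities fall out easily.
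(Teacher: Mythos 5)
Your overall architecture is correct, but it is genuinely different from the paper's in the algebraic half. The bijection you describe (label the blocks by decreasing maxima, attach block $B_j$ to card $j$, and note that step $i$ uses $\sigma_1$ exactly when $i$ shares a block with $i-1$, cyclically with $t$ when $i=1$) is essentially the paper's proof that $B_t(n)=S_t(n)$ and $B'_t(n)=S'_t(n)$. The paper, however, never compares $M_t(n)$ with $B_t(n)$ directly: it proves $M_t(\lambda,\mu)=\langle\chi^\lambda\pi^t,\chi^\mu\rangle$ and $M'_t(\lambda,\mu)=\langle\chi^\lambda\theta^t,\chi^\mu\rangle$ by the Branching Rule, and then gets $S_t(n)=M_t(n)$ and $S'_t(n)=M'_t(n)$ in one stroke from the fact that $\Xi=\sum_m\sigma_m^{-1}$ and $\Delta=\Xi-\id_{\Sym_n}$ lie in Solomon's descent algebra $\D(\Sym_n)$, whose canonical epimorphism to $\Cl(\Sym_n)$ sends $\Xi\mapsto\pi$, $\Delta\mapsto\theta$ and is an isometry, so the coefficient of the identity in $\Xi^t$ (resp.\ $\Delta^t$) equals both $S_t(n)$ and $\langle\pi^t,1_{\Sym_n}\rangle$ (resp.\ $S'_t(n)$ and $\langle\theta^t,1_{\Sym_n}\rangle$). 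You instead prove $M_t(n)=B_t(n)$ by matching recurrences: on the $M$ side via adjunction and the identity $\mathrm{Res}\,\mathrm{Ind}\cong\mathrm{id}\oplus\mathrm{Ind}\,\mathrm{Res}$ (which is Mackey's theorem, equivalently the character identity that $\pi$ restricted to $\Sym_{n-1}$ is $1_{\Sym_{n-1}}$ plus the natural permutation character of $\Sym_{n-1}$ --- it is not really a descent-algebra fact, though it encodes $DU-UD=I$ on Young's lattice), on the $B$ side by conditioning on the block containing $1$; and you handle the primed quantities by inclusion--exclusion, using that the unique exceptional move is a self-loop at every partition of $n\ge1$, against inclusion--exclusion over the $t$ cyclic adjacencies for $B'_t(n)$. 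I checked these identities, including the degenerate cases $t=1,2$ where the ``cycle'' has a loop or a doubled edge, and they are correct. Your route avoids the descent algebra and the isometry entirely and is more elementary; the paper's route treats the primed and unprimed cases uniformly and, more importantly, is the one that scales to the $k$-shuffle eigenvalue theorem and to Types B and D later in the paper.

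Two points need shoring up. First, in the bijection you verify only one composite: you show that the sequence built from a partition has identity product, which gives $B_t(n)\le S_t(n)$, but you never argue that an arbitrary identity-product sequence is recovered from its induced time-partition, i.e.\ that the assignment of blocks to cards is forced. The missing step is short and is exactly the paper's: if a card $c'>c$ were lifted after the final lift of card $c$, then from that moment cards $c$ and $c'$ remain in the wrong relative order, so the product cannot be the identity; hence the block containing $t$ must belong to card $1$, and inductively the labelling is determined, so the map from sequences to partitions is injective. Second, your recurrence and inclusion--exclusion formulas need $n$ not too small: the Mackey identity as stated requires $n\ge2$ (for $n=1$ simply note $M_t(1)=B_t(1)=1$), and for $n=0$ both displayed formulas fail as written (the empty partition has no exceptional self-loop, and the cyclic inclusion--exclusion gives $(-1)^t$ rather than $0$), but there the theorem is trivial since all six quantities equal $1$ if $t=0$ and $0$ otherwise. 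These are easily patched and do not affect the soundness of the approach.
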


In \S 3 we consider the more general $k$-shuffles for $k \in \N$ and use
our methods to give a short proof of Theorem~4.1 in \cite{DFP}
on the eigenvalues of the associated Markov chain.
Theorem~\ref{thm:main} and this result have analogues for Types B and D. We state and prove these
results in \S\ref{sec:typeB} and \S\ref{sec:typeD}.

We remark that the sequences of partition moves counted by $M'_t\bigl( \lambda, \mu \bigr)$
are in  bijection with the Kronecker tableaux $KT^t_{\lambda, \mu}$
defined in \cite[Definition~2]{GoupilChauve}. In \S 7
we show that, when $n \ge t$,
the equality $B'_t(n) = M'_t(n)$ is a corollary of a special case
of~\cite[Lemma~1]{GoupilChauve}. The proof of this lemma depends on results on
the RSK correspondence for oscillating tableaux, as developed in~\cite{DDF} and \cite{Sundaram}.
Our proof, 
which goes via the numbers $S'_t(n)$, is
different and  significantly shorter.
In \S7 we also
 make some remarks on the connections with earlier work
 of Bernhart \cite{Bernhart} and
Fulman \cite{FulmanCardShuffling} and
show an unexpected obstacle to a purely bijective proof of Theorem~\ref{thm:main}:
this line of argument is motivated by \cite{FulmanCardShuffling} and \cite{GoupilChauve}.
We end in \S8 by discussing appearances of the numbers studied in this paper in OEIS \cite{OEIS};
with the expected exception of $B_t(n)$ and $\stir{t}{n}$ these are few, and occur for
particular choices of the parameters $t$ and $n$.









\section{Proof of Theorem~\ref{thm:main}}

We prove the first equality in Theorem~\ref{thm:main} using
an explicit bijection.

\begin{lemma}\label{lemma:BS}
If $t$, $n \in \N_0$ then $B_t(n) = S_t(n)$ and $B'_t(n) = S'_t(n)$.
\end{lemma}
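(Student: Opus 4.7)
The plan is to exhibit an explicit bijection between shuffle sequences with product $\id$ and set partitions of $\{1,\ldots,t\}$ into at most $n$ blocks. The forward direction is almost a matter of definition; the content is in constructing a well-defined inverse.

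First I would define the forward map. Given a sequence $(\sigma_{m_1}, \ldots, \sigma_{m_t})$ with product $\id$, track the cards lifted: starting from the deck $[1,2,\ldots,n]$, let $c_i$ denote the card lifted to the top by $\sigma_{m_i}$. Declare $i \sim j$ iff $c_i = c_j$; the resulting partition $\phi(m_1,\ldots,m_t)$ has at most $n$ blocks because cards come from $\{1,\ldots,n\}$, and it is a partition of $\{1,\ldots,t\}$ by construction. The bijection $B'_t(n) = S'_t(n)$ will come for free at the end, so I focus on $B_t(n) = S_t(n)$.

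The key ingredient is a description of the deck configuration after the lifts $c_1,\ldots,c_t$: the top of the deck consists of the distinct cards in this sequence listed in reverse order of their last occurrences, followed by the un-lifted cards in their original order. This is an easy induction on $t$, since each new lift places its card on top while preserving the relative order of everything else. From this lemma, the product of the $\sigma_{m_i}$ is $\id$ iff the distinct lifted cards, in reverse order of last occurrence, are exactly $1,2,\ldots,k$ in that order, where $k$ is the number of distinct cards appearing. Equivalently: $c_t = 1$, the card with next-latest last occurrence is $2$, and so on, and only cards $1,\ldots,k$ are lifted.

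This observation pins down the inverse map. Given a set partition of $\{1,\ldots,t\}$ into $k \le n$ blocks, order the blocks $P_1, P_2, \ldots, P_k$ by decreasing maximum element, and label the indices in $P_j$ with card $j$; this is the unique labelling consistent with the constraint above. Then set $m_i$ to be the current position of the card labelling step $i$, just before step $i$ is performed. The forward map sends this shuffle sequence back to the original partition, and the deck ends at $[1,\ldots,n]$ by the lemma, so the product is indeed $\id$. This proves $B_t(n) = S_t(n)$.

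For the primed equality, note that $\sigma_1 = \id$ lifts whatever card is on top. Hence for $i \ge 2$, $\sigma_{m_i} = \sigma_1$ iff $c_i = c_{i-1}$, i.e.\ iff $i-1$ and $i$ lie in the same block; and for $i=1$, $\sigma_{m_1} = \sigma_1$ iff $c_1 = 1$, which (since $c_t = 1$ always) happens iff $1$ and $t$ lie in the same block. So the sequences avoiding $\sigma_1$ correspond under $\phi$ precisely to the partitions in which no two consecutive indices, and not both $1$ and $t$, lie in a common block, giving $B'_t(n) = S'_t(n)$. The main obstacle is really just verifying the deck-configuration lemma and extracting the canonical block labelling from it; once that is in place the two equalities drop out together.
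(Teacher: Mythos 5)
Your proposal is correct and is essentially the paper's own argument: both group the times $\{1,\ldots,t\}$ by the card lifted, and recover the unique preimage by labelling the blocks in decreasing order of their maxima, which forces the lifted card at each time; your deck-configuration lemma is just a slightly more explicit packaging of the verification that the product is the identity. The treatment of the primed case (consecutive times in a block correspond to repeated lifts, and $1,t$ in a common block to an initial $\sigma_1$) likewise matches the paper.
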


\begin{proof}
Suppose that $\tau_1, \ldots, \tau_t$ is a sequence of top-to-random shuffles such
that $\tau_1 \ldots \tau_t = \id_{\Sym_n}$.  Take a deck of
cards labelled by $\{1,\ldots, n\}$, so that card $c$ starts in position $c$,
and apply the shuffles so that at time $s \in \{1,\ldots, t\}$ we
permute the positions of the deck by $\tau_s$.
For each $c \in \{1,\ldots, n\}$ let
$A_c$ be the set of $s \in \{1,\ldots, t\}$
such that $\tau_s$ lifts  card~$c$.
Removing any empty sets from the list $A_1, \ldots, A_n$ we obtain
a set partition of $\{1,\ldots, t\}$ into at most $n$ sets.

Conversely, given a set partition of $\{1,\ldots, t\}$ into $m$ parts where \hbox{$m \le n$},
we claim that there is a unique way to label its parts $A_1, \ldots, A_m$,
and a unique sequence of $t$ random-to-top shuffles leaving the deck invariant, such that~$A_c$ is
the set of times when card $c$ is lifted by the shuffles in this sequence.
If such a labelling exists, then for each $c \in \{1,\ldots, m\}$,
the set~$A_c$ must contain the greatest element of $\{1,\ldots, t\} \backslash
(A_1\cup \cdots \cup A_{c-1})$, since otherwise a card $c'$ with $c' > c$ is lifted
after the final time when card $c$ is lifted, and from this time onwards, cards~$c$
and~$c'$ are in the wrong order.
Using this condition to fix $A_1, \ldots, A_m$ determines
the card lifted at each time, and so determines a unique sequence of random-to-top shuffles
that clearly leaves the deck invariant. It follows that $B_t(n) = S_t(n)$.

A set partition corresponds to a sequence of non-identity shuffles if and only
if the same card is never lifted at consecutive times, and the first card lifted
is not card $1$. Therefore the bijection just defined restricts to give
$B'_t(n) = S'_t(n)$.
\end{proof}

The second half of the proof is algebraic.
Let $\Cl(\Sym_n)$ denote the ring of class
functions of $\Sym_n$. Let $\pi \in \Cl(\Sym_n)$ be the natural permutation
character of $\Sym_n$, defined by $\pi(\tau) = |\Fix\ \tau|$ for $\tau \in \Sym_n$.
Let $\chi^\lambda$ denote the irreducible character of $\Sym_n$ canonically
labelled by the partition $\lambda$ of~$n$. Let $\theta = \pi - 1_{\Sym_n}$
where $1_{\Sym_n}$ is the trivial character of $\Sym_n$; note that $\theta = \chi^{(n-1,1)}$.
The main idea in the following lemma is well known: it appears in
\cite[Lemma 4.1]{BessenrodtKleshchev}, and the special case
for $M'_t\bigl( (n),\mu \bigr)$ is proved in
\cite[Proposition 1]{GoupilChauve}.

\begin{lemma}\label{lemma:branching}
Let $t \in \N_0$. If $\lambda$ and $\mu$ are partitions of $n \in \N$ then
$M_t(\lambda, \mu) = \langle \chi^\lambda \pi^t, \chi^\mu \rangle$ and
$M'_t(\lambda,\mu) = \langle \chi^\lambda \theta^t, \chi^\mu \rangle$.
\end{lemma}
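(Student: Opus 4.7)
The plan is to prove the first identity by a straightforward application of Frobenius reciprocity combined with the Branching Rule, and then to deduce the second identity by a simple $\pi - 1$ accounting.

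First I would observe that $\pi = 1_{\Sym_{n-1}}\!\uparrow^{\Sym_n}$, the permutation character on cosets of $\Sym_{n-1}$. Hence by the projection formula (the character-theoretic form of Frobenius reciprocity),
\[
\chi^\lambda \pi \;=\; \chi^\lambda \cdot (1_{\Sym_{n-1}}\!\uparrow^{\Sym_n})
\;=\; (\chi^\lambda\!\downarrow_{\Sym_{n-1}})\!\uparrow^{\Sym_n}.
\]
Applying the Branching Rule twice, this equals
\[
\Bigl(\sum_{\lambda^-} \chi^{\lambda^-}\Bigr)\!\uparrow^{\Sym_n}
\;=\; \sum_{\lambda^-}\sum_{\nu} \chi^{\nu},
\]
where $\lambda^-$ ranges over partitions obtained by removing a removable box from $\lambda$, and $\nu$ ranges over partitions obtained by adding an addable box to $\lambda^-$. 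The multiplicity of $\chi^\nu$ in $\chi^\lambda \pi$ is therefore exactly the number of pairs (removable box of $\lambda$, addable position of the resulting diagram) that convert $\lambda$ into $\nu$, which is precisely $M_1(\lambda,\nu)$.

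I would then iterate: assuming inductively that $\chi^\lambda \pi^{t-1} = \sum_\kappa M_{t-1}(\lambda,\kappa)\chi^\kappa$, multiplying by $\pi$ and applying the base case to each $\chi^\kappa$ gives
\[
\chi^\lambda \pi^t \;=\; \sum_\kappa M_{t-1}(\lambda,\kappa) \sum_\mu M_1(\kappa,\mu)\chi^\mu
\;=\; \sum_\mu M_t(\lambda,\mu)\chi^\mu,
\]
since the number of move sequences $\lambda\to\cdots\to\mu$ of length $t$ splits as a sum over the intermediate partition reached after $t-1$ steps. Taking the inner product with $\chi^\mu$ yields the first equality.

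For the second equality, the key observation is that the exceptional move from $\lambda$ (remove and replace the lowest removable box) is a single move $\lambda\to\lambda$, so
\[
M'_1(\lambda,\nu) \;=\; M_1(\lambda,\nu) - \delta_{\lambda\nu}.
\]
Hence $\chi^\lambda \theta = \chi^\lambda(\pi - 1_{\Sym_n}) = \sum_\nu M_1(\lambda,\nu)\chi^\nu - \chi^\lambda = \sum_\nu M'_1(\lambda,\nu)\chi^\nu$, and the same inductive argument as above, with $\pi$ replaced by $\theta$ and $M_t$ by $M'_t$, gives $\chi^\lambda \theta^t = \sum_\mu M'_t(\lambda,\mu)\chi^\mu$, completing the proof.

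There is no serious obstacle: the whole argument hinges on correctly identifying the multiplicity of $\chi^\nu$ in $\chi^\lambda\pi$ with the number of single moves from $\lambda$ to $\nu$, which follows cleanly from the Branching Rule. The only point requiring care is the identification of the unique "diagonal" move cancelled by $-1_{\Sym_n}$ as the exceptional one, but this is immediate from the explicit formula $M_1(\lambda,\lambda)=r$, $M'_1(\lambda,\lambda)=r-1$ recorded after the definition.
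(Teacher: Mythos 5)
Your proposal is correct and follows essentially the same route as the paper: write $\pi = 1_{\Sym_{n-1}}\!\uparrow^{\Sym_n}$, use the projection formula together with the Branching Rule (the paper phrases the addition step via Frobenius reciprocity, which is equivalent to your second application of the Branching Rule), induct on $t$, and handle the primed case via $\theta = \pi - 1_{\Sym_n}$ accounting for the exceptional move. No substantive differences.
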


\begin{proof}
We have
\[ \chi^\lambda \pi = \chi^\lambda \bigl(1\Ind_{\Sym_{n-1}}^{\Sym_n} \bigr) =
\bigl( \chi^\lambda \Res_{\Sym_{n-1}}\bigr) \Ind^{\Sym_n}. \]
By the Branching Rule for $\Sym_n$ (see \cite[Chapter~9]{James}) we have
$\chi^\lambda \hskip-0.5pt\res_{\Sym_{n-1}} = \sum_\nu \chi^\nu$ where the sum
is over all partitions $\nu$ whose Young diagram is obtained from $\lambda$ by
removing a single box. The case $t=1$ of the first part of the
lemma now follows by Frobenius reciprocity, and the general case follows by induction.
The second part is proved similarly, using that
$\chi^\lambda \theta = \chi^\lambda (\pi - 1_{\Sym_n}) =
\bigl( \chi^\lambda \res_{\Sym_{n-1}}\bigr) \ind^{\Sym_n}\! -{}\hskip1pt \chi^\lambda$.
\end{proof}

Let $\D(\Sym_n)$ be the descent subalgebra of the rational group algebra $\Q\Sym_n$,
as defined in \cite[page 7]{BlessenohlSchocker} or \cite[Theorem~1]{Solomon}. 
Let $\Xi = \sum_{m=1}^n \sigma_m^{-1}$ and let $\Delta = \Xi - \id_{\Sym_n}$. 
Note that $\tau \in \Sym_n$ is a summand of $\Delta$ if and only if
\[ 1 \tau > 2 \tau < 3 \tau < \ldots < n\tau. \]
Thus $\Delta$ is the sum of all $\tau \in \Sym_n$ such that the unique descent of $\tau$ is in position $1$.
Hence $\Delta$, $\Xi \in \D(\Sym_n)$. By \cite[Theorem~1.2]{BlessenohlSchocker} or \cite[Theorem~1]{Solomon},
under the canonical
algebra epimorphism $\D(\Sym_n) \rightarrow \Cl(\Sym_n)$, we have $\Xi \mapsto \pi$, $\id_{\Sym_n} \mapsto
1_{\Sym_n}$ and $\Delta \mapsto \theta$. Define a bilinear form on $\Q\Sym_n$ by
$(g,h) = 1$ if $g = h^{-1}$ and $(g,h) = 0$ if $g \not= h^{-1}$.
By \cite[Theorem~1.2]{BlessenohlSchocker},
this epimorphism is an isometry with respect
to the bilinear form on $\D(\Sym_n)$ defined by restriction of $(-,-)$.


\begin{lemma}\label{lemma:SM}
If $t$, $n \in \N_0$  then $S_t(n) = M_t(n)$ and $S'_t(n) = M'_t(n)$.
\end{lemma}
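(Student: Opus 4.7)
The plan is to combine the two tools already in hand: Lemma~\ref{lemma:branching} lets us express $M_t(n)$ and $M'_t(n)$ as inner products of characters, and the isometric epimorphism $\phi : \D(\Sym_n) \to \Cl(\Sym_n)$ with $\phi(\Xi) = \pi$, $\phi(\Delta) = \theta$ and $\phi(\id_{\Sym_n}) = 1_{\Sym_n}$ lets us transport these inner products back to the group algebra, where they become coefficients of $\id_{\Sym_n}$ in the expansions of $\Xi^t$ and $\Delta^t$.

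Concretely, using $\chi^{(n)} = 1_{\Sym_n}$, Lemma~\ref{lemma:branching} with $\lambda = \mu = (n)$ gives
\[ M_t(n) = \langle \pi^t, 1_{\Sym_n}\rangle \quad\text{and}\quad M'_t(n) = \langle \theta^t, 1_{\Sym_n}\rangle. \]
Since $\D(\Sym_n)$ is a subalgebra of $\Q\Sym_n$ and $\phi$ is an algebra homomorphism, $\phi(\Xi^t) = \pi^t$ and $\phi(\Delta^t) = \theta^t$. The isometry property then yields
\[ \langle \pi^t, 1_{\Sym_n}\rangle = (\Xi^t, \id_{\Sym_n}) \quad\text{and}\quad \langle \theta^t, 1_{\Sym_n}\rangle = (\Delta^t, \id_{\Sym_n}), \]
and by definition of the bilinear form $(-,-)$ each of these is the coefficient of $\id_{\Sym_n}$ in the corresponding power.

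It remains to identify these coefficients combinatorially. Expanding
\[ \Xi^t = \sum_{(m_1,\ldots,m_t) \in \{1,\ldots,n\}^t} \sigma_{m_1}^{-1}\cdots\sigma_{m_t}^{-1}, \]
the coefficient of $\id_{\Sym_n}$ is the number of tuples with $\sigma_{m_1}^{-1}\cdots \sigma_{m_t}^{-1} = \id_{\Sym_n}$, equivalently $\sigma_{m_t}\cdots\sigma_{m_1} = \id_{\Sym_n}$; reindexing by $\tau_i = \sigma_{m_{t+1-i}}$ gives a bijection with the sequences of random-to-top shuffles counted by $S_t(n)$. Since $\sigma_1 = \id_{\Sym_n}$, we have $\Delta = \sum_{m=2}^{n} \sigma_m^{-1}$, so the same expansion applied to $\Delta^t$ restricts the indices to $\{2,\ldots,n\}$ and gives $S'_t(n)$.

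There is no real obstacle here; the argument is a short chain of identifications, and the only point that requires a moment of care is the direction reversal in the final bijection, where one passes from $\sigma_{m_1}^{-1}\cdots\sigma_{m_t}^{-1} = \id_{\Sym_n}$ to $\tau_1\cdots\tau_t = \id_{\Sym_n}$ by inverting and reversing the tuple.
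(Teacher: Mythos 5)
Your argument is correct and takes essentially the same route as the paper's proof: Lemma~\ref{lemma:branching} with $\lambda=\mu=(n)$, the isometric descent-algebra epimorphism sending $\Xi \mapsto \pi$ and $\Delta \mapsto \theta$, and the identification of the coefficient of $\id_{\Sym_n}$ in $\Xi^t$ and $\Delta^t$ with $S_t(n)$ and $S'_t(n)$ (which you spell out in more detail than the paper). The only omission is the trivial case $n=0$, where Lemma~\ref{lemma:branching} does not apply and which the paper checks directly in one line; add that sentence and your proof is complete.
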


\begin{proof}
If $n = 0$ then clearly $S_t(0)=M_t(0)=S'_t(0)=M'_t(0)$; the common value is $0$ if $t \in \N$
and $1$ if $t=0$. Suppose that $n \in \N$.
By Lemma~\ref{lemma:branching} it is sufficient to prove that
$\langle \pi^t, 1_{\Sym_n} \rangle = S_t(n)$ and $\langle \theta^t, 1_{\Sym_n}\rangle = S'_t(n) $.
Write $[\id_{\Sym_n}]$ for the coefficient of the identity permutation in an element
of $\Q \Sym_n$. By the remark before this lemma, if $\Gamma \in \D(\Sym_n)$ maps to $\phi \in \Cl(\Sym_n)$
under the epimorphism  $\D(\Sym_n) \rightarrow \Cl(\Sym_n)$, then
\begin{equation}\label{eq:iso}
[\id_{\Sym_n}]\Gamma = (\Gamma, \id_{\Sym_n}) = \langle \phi, 1_{\Sym_n} \rangle.
\end{equation}
The two required results now follow from Lemma~\ref{lemma:branching} using the obvious equalities
$[\id_{\Sym_n}] \Xi^t = S_t(n)$ and $[\id_{\Sym_n}] \Delta^t = S'_t(n)$.
\end{proof}

Theorem~\ref{thm:main} now follows from Lemmas~\ref{lemma:BS} and~\ref{lemma:SM}.

\section{Eigenvalues of the $k$-shuffle}\label{sec:Phatarfod}

Fix $n$, $k \in \N$ with $k \le n$. A $k$-shuffle of a deck of $n$ cards
takes any $k$ cards in the deck and moves them to the top of the deck, preserving their
relative order. Note that a $1$-shuffle is a random-to-top shuffle as already defined
and that the inverse of a $k$-shuffle is a riffle shuffle involving the top~$k$ cards.
 Let $P(k)$ be the 
transition matrix of the Markov chain on $\Sym_n$ in which each step is
 given by multiplication by one of
 the $\binom{n}{k}$ $k$-shuffles, chosen
uniformly at random.
Thus for $\sigma$, $\tau \in \Sym_n$ we have
\[ P(k)_{\sigma\tau} = \begin{cases} \binom{n}{k}^{-1} & \text{if $\sigma^{-1}\tau$ is
a $k$-shuffle}
\\ 0 & \text{otherwise.} \end{cases}. \]

It was proved by Phatarfod in \cite{Phatarfod} that
the eigenvalues of $P(1)$ are exactly the numbers $|\Fix\ \tau|/n$ for $\tau \in \Sym_n$.
More generally, it follows from a statement
by Diaconis, Fill and Pitman \cite[(6.1)]{DFP}
that
if~$\pi_k(\tau)$ is the
number of fixed points of $\tau \in \Sym_n$ in its action on $k$-subsets of $\{1,\ldots, n\}$,
then the eigenvalues of~$P(k)$
are exactly the numbers $\pi_k(\tau)/\binom{n}{k}$ for $\tau \in \Sym_n$.
The proof of this statement in \cite{DFP} is referred to unpublished work of Diaconis, Hanlon and Rockmore.
We provide a short proof here.

Observe that $(\Tr P(k)^t)/n!$ is the probability that $t$ sequential
$k$-shuffles leave the deck invariant.
It is easily seen that $\tau^{-1}$ is a $k$-shuffle if and only if
$1 \tau < \ldots < k \tau$ and $(k+1)\tau < \ldots < n\tau$.
Hence the sum of the inverses of the $k$-shuffles is the
basis element $\Xi^{(k,n-k)} \in \D(\Sym_n)$ defined in \cite[page 7]{BlessenohlSchocker},
which maps to $\pi_k = 1_{\Sym_k \times \Sym_{n-k}}\!\ind^{\Sym_n}$ under the canonical
epimorphism $\D(\Sym_n) \rightarrow \Cl(\Sym_n)$. 
Therefore~\eqref{eq:iso} in the proof of Lemma~\ref{lemma:SM} implies that
\begin{align*}
 \frac{\Tr P(k)^t}{n!} &= [\mathrm{id}_{\Sym_n}]  \binom{n}{k}^{-t}
{\Xi^{(k,n-k)}}^t 
 \\
&= \binom{n}{k}^{-t} \langle \pi_k^t, 1_{\Sym_n} \rangle \\
&= \binom{n}{k}^{-t} \frac{1}{n!} \sum_{\tau \in \Sym_n}
\pi_k(\tau)^t
\end{align*}
for all $t \in \N_0$. It follows that if $\epsilon_1, \ldots, \epsilon_{n!}$ are the eigenvalues
of $P(k)$, then $\sum_{i=1}^{n!} \epsilon_i^t = \sum_{\tau \in \Sym_n}
\pi_k(\tau)^t/\binom{n}{k}^t $
for all $t \in \N_0$. Thus the multisets $\{\epsilon_i : 1 \le i \le n!\}$ and
$\{ \pi_k(\tau) / \binom{n}{k} : \tau \in \Sym_n \}$ are equal, as required.

The analogous result for the $k$-shuffle with the identity permutation
excluded is stated below.

\begin{proposition}\label{prop:eigenvaluesA}
Let $n \ge 2$ and
let $P'(k)$ be the transition matrix of the $k$-shuffle, modified
so that at each time one of the $\binom{n}{k}-1$ non-identity
$k$-shuffles is chosen uniformly at random.
The eigenvalues
of $P'(k)$ are $(\pi_k(\tau) - 1) / (\binom{n}{k}-1)$
for $\tau \in \Sym_n$. 
\end{proposition}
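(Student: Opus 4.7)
The plan is to mimic the argument just given for $P(k)$, with the sum $\Xi^{(k,n-k)}$ of inverses of $k$-shuffles replaced by the element $\Omega := \Xi^{(k,n-k)} - \id_{\Sym_n}$, which is the sum of the inverses of the $\binom{n}{k}-1$ non-identity $k$-shuffles. Since both $\Xi^{(k,n-k)}$ and $\id_{\Sym_n}$ lie in $\D(\Sym_n)$, so does $\Omega$; and because the canonical epimorphism $\D(\Sym_n) \to \Cl(\Sym_n)$ sends $\Xi^{(k,n-k)} \mapsto \pi_k$ and $\id_{\Sym_n} \mapsto 1_{\Sym_n}$, it sends $\Omega \mapsto \pi_k - 1_{\Sym_n}$.

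First I would observe that $(\Tr P'(k)^t)/n!$ is the probability that a sequence of $t$ non-identity $k$-shuffles leaves the deck invariant, and so equals $\bigl(\binom{n}{k}-1\bigr)^{-t}[\id_{\Sym_n}]\Omega^t$. Applying~\eqref{eq:iso} from the proof of Lemma~\ref{lemma:SM} to the descent-algebra element $\Omega^t$ then gives
\begin{align*}
\frac{\Tr P'(k)^t}{n!} &= \Bigl(\binom{n}{k}-1\Bigr)^{-t}\bigl\langle (\pi_k - 1_{\Sym_n})^t,\, 1_{\Sym_n}\bigr\rangle \\
&= \Bigl(\binom{n}{k}-1\Bigr)^{-t}\frac{1}{n!}\sum_{\tau \in \Sym_n}\bigl(\pi_k(\tau)-1\bigr)^t
\end{align*}
for every $t \in \N_0$.

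Writing $\epsilon_1, \ldots, \epsilon_{n!}$ for the eigenvalues of $P'(k)$, the displayed identity rearranges to
$$\sum_{i=1}^{n!}\epsilon_i^t \;=\; \sum_{\tau \in \Sym_n}\left(\frac{\pi_k(\tau)-1}{\binom{n}{k}-1}\right)^{\!t}$$
for every $t \in \N_0$. Since the power sums of a finite multiset of real numbers determine the multiset (as already used at the end of the argument for $P(k)$), it follows that the eigenvalues of $P'(k)$ are precisely the numbers $(\pi_k(\tau)-1)/(\binom{n}{k}-1)$ as $\tau$ ranges over $\Sym_n$. I do not anticipate any serious obstacle: once $\Omega$ has been identified, the computation is a line-by-line transcription of the one for $P(k)$, and the only point requiring verification is that subtracting $\id_{\Sym_n}$ on the group-algebra side corresponds, via the descent-algebra epimorphism, to subtracting $1_{\Sym_n}$ on the character side, which is immediate from the facts about the map $\D(\Sym_n) \to \Cl(\Sym_n)$ recalled just before Lemma~\ref{lemma:SM}.
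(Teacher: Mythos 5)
Your argument is correct: $\Omega = \Xi^{(k,n-k)} - \id_{\Sym_n}$ does lie in $\D(\Sym_n)$, maps to $\pi_k - 1_{\Sym_n}$ under the canonical epimorphism, and the trace/power-sum computation then goes through verbatim, so the multiset of eigenvalues of $P'(k)$ is as claimed. This is, however, not the proof the paper actually writes out: it is the adaptation that the paper merely alludes to in its first sentence. The printed proof is a one-liner, namely the matrix identity $\bigl(\binom{n}{k}-1\bigr)P'(k) = \binom{n}{k}P(k) - I_n$, which exhibits $P'(k)$ as an affine polynomial in $P(k)$; since applying a polynomial to a matrix applies it to each eigenvalue with multiplicity, the spectrum of $P'(k)$ is read off immediately from the already-established spectrum of $P(k)$, with no need to rerun the descent-algebra and power-sum argument. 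What your route buys is a self-contained derivation that only uses the isometry \eqref{eq:iso} and the facts about the epimorphism recalled before Lemma~\ref{lemma:SM}; what the paper's route buys is brevity and an automatic transfer of multiplicities, at the cost of having first proved the unmodified case. Both are sound, and your only extra obligation -- checking that subtracting $\id_{\Sym_n}$ in $\D(\Sym_n)$ corresponds to subtracting $1_{\Sym_n}$ in $\Cl(\Sym_n)$ -- is indeed immediate from the stated properties of the epimorphism.
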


\begin{proof}
The proposition may be proved by adapting our proof of the result of Diaconis, Fill and Pitman.
Alternatively,
it may be obtained as a straightforward corollary to that result, by observing that \[\Bigl( \binom{n}{k}-1 \Bigr)P'(k)
= \binom{n}{k}P(k) - I_n.\qedhere\]
\end{proof}

We note that the eigenvalues of the
$k$-top-to-random shuffles considered in Theorem~4.1 and the following Remark 1 in \cite{DFP}
may be determined by a  similar short argument using the element $\Xi^{(1,\ldots,1,n-k)} \in \D(\Sym_n)$.


\section{Oriented random-to-top shuffles: Type B}\label{sec:typeB}

In this section we state and prove
the analogue of Theorem~\ref{thm:main} for the Coxeter group 
of Type~B.
Henceforth we shall always use the symbol $\dagger$ 
to indicate quantities relevant to this type. 

\subsection{Type B set partitions, shuffles and partition moves}\label{sec:typeBstatement}

We begin by defining the three quantities that we shall show are equal. Let $t$, $n \in \N_0$.

Let $\BB_t(n)$ be the number of set partitions of $\{1,\ldots, t\}$ into at most $n$ parts with
an even number of elements in each part distinguished by \emph{marks}. Let $\BBp_t(n)$ be defined similarly,
counting only those set partitions such that if  $1$ and $t$ are in the same part then $1$ is marked,
and if both $i$ and $i+1$ are in the
same part then $i+1$ is marked.

We define the Coxeter group $\BSym_n$  to be the maximal subgroup
of the symmetric group on the set $\{-1,\ldots, -n\} \cup \{1,\ldots, n\}$
that permutes $\{-1,1\}, \ldots, \{-n,n\}$ as blocks for its action. Let $\sigma \in \BSym_n$.
We interpret~$\sigma$  as the shuffle of a deck of $n$ cards, each oriented either \emph{face-up} or
\emph{face-down}, that moves a card in position $i \in \{1,\ldots, n\}$
to position $|i \sigma|$, changing the orientation of the card
if and only if $i\sigma \in \{-1,\dots, -n\}$.

\begin{definition}
For $m \in \N$ let
\begin{align*}
\rho_m &= (-1,\ldots, -m)(1,\ldots, m), \\
\bar{\rho}_m &= (-1, \ldots, -m, 1, \ldots, m).
\end{align*}
We say that $\rho_m$ and $\bar{\rho}_m$ are \emph{oriented
random-to-top shuffles}.
Let $\SB_t(n)$ be the number of sequences of $t$ oriented random-to-top shuffles whose product
is the identity permutation.
Define $\SBp_t(n)$ analogously, excluding the identity permutation~$\rho_1$.
\end{definition}

Observe that, under our card shuffling interpretation,
$\bar{\rho}_m$ lifts the card in position $m$ to the top of the deck and then flips it.

A pair $(\lambda, \lambda^\star)$ of partitions such that the
sum of the sizes of $\lambda$ and $\lambda^\star$ is~$n \in \N_0$ will be
called a \emph{double partition of $n$}.
By \cite[Theorem 4.3.34]{JK}
the irreducible characters of $\BSym_n$ are
canonically labelled by double partitions of~$n$. We briefly
recall this construction. Given $m \in \N$,
let $\widetilde{\sgn}^{\times m}$ denote the linear character of $C_2 \wr \Sym_m$ on which
each factor of the base group $C_2 \times \cdots \times C_2$ acts as $-1$.
Let $\Inf$ denote the inflation functor. Define
\[ \chi^{(\lambda \sep \lambda^\star)} =
\bigl( \Inf^{C_2 \wr \Sym_\ell}_{\Sym_\ell} \chi^\lambda \,\times\, \widetilde{\sgn}^{\times \ell^\star}
\Inf^{C_2 \wr \Sym_{\ell^\star}}_{\Sym_{\ell^\star}} \chi^\mu \bigr) \Ind^{\BSym_n} \]
where $\lambda$ is a partition of $\ell$ and $\lambda^\star$ is a partition of $\ell^\star$.
The $\chi^{(\lambda,\lambda^\star)}$ for $(\lambda,\lambda^\star)$ a double partition of $n$
are exactly the irreducible characters of $\BSym_n$.

\begin{definition}
Let $(\lambda, \lambda^\star)$ be a double partition. A \emph{double-move}
on $(\lambda, \lambda^\star)$ consists of the removal and then
addition of a single box on the corresponding Young diagrams.
(The box need not be added to the diagram from which it is removed.)
A double-move is \emph{exceptional} if it consists of the removal and then
addition in the same place of the lowest removable box in the diagram of $\lambda$,
or if $\lambda$ is empty, of the lowest removable box in the diagram of $\lambda^\star$.
Let $\MB_t\bigl( (\lambda, \lambda^\star), (\mu,\mu^\star) \bigr)$
be the number of sequences
of~$t$ double-moves that start at $(\lambda, \lambda^\star)$ and finish at $(\mu,\mu^\star)$.
Let $\MBp_t\bigl( (\lambda, \lambda^\star), (\mu,\mu^\star) \bigr)$ be defined analogously, considering
only non-exceptional double-moves. Let $\MB_t(n) = \MB_t\bigl( ((n), \varnothing),
((n), \varnothing) \bigr)$
and let $\MBp_t(n) = \MBp_t\bigl( ((n), \varnothing), ((n), \varnothing) \bigr)$.
\end{definition}

This definition of `exceptional' is convenient for small examples, but can be replaced
with any other that always excludes exactly one double-move from the
set counted by $\MB_1\bigl( (\lambda, \lambda^\star), (\lambda,\lambda^\star) \bigr)$.

We can now state the analogue of Theorem~\ref{thm:main} for Type B.

\begin{theorem}\label{thm:mainB}
For all $t$, $n \in \N_0$ we have
$B^\dagger_t(n) = S^\dagger_t(n) = M^\dagger_t(n)$ and
$B^{\dagger\prime}_t(n) = S^{\dagger\prime}_t(n) = M^{\dagger\prime}_t(n)$.
\end{theorem}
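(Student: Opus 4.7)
The plan is to mirror the proof of Theorem~\ref{thm:main}, paralleling Lemmas~\ref{lemma:BS}, \ref{lemma:branching}, and~\ref{lemma:SM}: a bijection establishing $\BB_t(n) = \SB_t(n)$ and $\BBp_t(n) = \SBp_t(n)$, a Type~B branching-type identity, and an isometry argument inside Solomon's descent algebra of $\BSym_n$.

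For the bijection, I would extend the map of Lemma~\ref{lemma:BS} by recording flips as marks. Given $\tau_1, \ldots, \tau_t$ oriented random-to-top shuffles with identity product, let $A_c$ be the set of times $s$ at which $\tau_s$ lifts card~$c$, and mark $s \in A_c$ precisely when $\tau_s = \bar\rho_m$ for some~$m$. Because the deck must return to the fully face-up configuration, each card is flipped an even number of times, so the marks in each part are even in number, giving an element of $\BB_t(n)$; the reverse map uses the Type~A recipe to determine the card lifted at each time, with the mark selecting between $\rho_m$ and $\bar\rho_m$. For the primed version, observe that $\rho_1 = \id_{\BSym_n}$ is excluded but $\bar\rho_1 = s_0$ is not, so whenever the same card is lifted at two consecutive times, or at both times $1$ and $t$, that card sits at position~$1$ and must be moved by $\bar\rho_1$, forcing the marks prescribed in the definition of $\BBp_t(n)$.

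For the branching identity, set $\pi^\dagger = 1_{\BSym_{n-1}} \ind^{\BSym_n}$, the permutation character of $\BSym_n$ on $\{\pm 1, \ldots, \pm n\}$, and $\theta^\dagger = \pi^\dagger - 1_{\BSym_n}$. By the branching rule for $\BSym_n \downarrow \BSym_{n-1}$, $\chi^{(\lambda \sep \lambda^\star)}\res_{\BSym_{n-1}} = \sum_\nu \chi^{(\nu \sep \lambda^\star)} + \sum_{\nu^\star} \chi^{(\lambda \sep \nu^\star)}$, the sums over box removals from each diagram. Frobenius reciprocity gives $\chi^{(\lambda \sep \lambda^\star)}\pi^\dagger = \bigl(\chi^{(\lambda \sep \lambda^\star)}\res\bigr)\ind$, which decomposes as the sum of $\chi^{(\mu \sep \mu^\star)}$ over all $(\mu,\mu^\star)$ reachable from $(\lambda,\lambda^\star)$ by a single double-move; iterating yields $\MB_t\bigl((\lambda,\lambda^\star),(\mu,\mu^\star)\bigr) = \langle \chi^{(\lambda \sep \lambda^\star)}(\pi^\dagger)^t, \chi^{(\mu \sep \mu^\star)}\rangle$, with the analogous identity for $\MBp_t$ using $\theta^\dagger$. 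Specialising to $(\lambda,\lambda^\star) = (\mu,\mu^\star) = ((n),\varnothing)$, for which $\chi^{((n) \sep \varnothing)} = 1_{\BSym_n}$, gives $\MB_t(n) = \langle (\pi^\dagger)^t, 1_{\BSym_n}\rangle$.

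For the algebraic step, define $\Xi^\dagger = \sum_m (\rho_m^{-1} + \bar\rho_m^{-1})$ and $\Delta^\dagger = \Xi^\dagger - \id_{\BSym_n}$, so that $[\id_{\BSym_n}](\Xi^\dagger)^t = \SB_t(n)$ and $[\id_{\BSym_n}](\Delta^\dagger)^t = \SBp_t(n)$ by direct expansion. The identity $\bar\rho_m = \rho_m s_0$ with $s_0 = \bar\rho_1$ allows one to write $\Xi^\dagger = (\id_{\BSym_n} + s_0)\sum_m \rho_m^{-1}$. The objective is to identify $\Xi^\dagger$ with an element of $\D(\BSym_n)$ whose image under the canonical Solomon epimorphism is $\pi^\dagger$; granted this, the isometry of~\eqref{eq:iso} yields $[\id](\Xi^\dagger)^t = \langle (\pi^\dagger)^t, 1_{\BSym_n}\rangle = \MB_t(n)$, and similarly $\SBp_t(n) = \MBp_t(n)$ via $\Delta^\dagger$. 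The main obstacle is precisely this identification: the summands $\rho_m^{-1}$ and $\bar\rho_m^{-1}$ have heterogeneous Type~B descent sets (some $\{s_1\}$, others $\{s_0\}$, with $\rho_1^{-1} = \id$ having none), so $\Xi^\dagger$ is not a single basis element of $\D(\BSym_n)$; establishing that it nevertheless lies in the descent algebra with the correct image---exploiting the $(\id + s_0)A$ factorisation together with the Type~A descent algebra structure of $A = \sum_m \rho_m^{-1}$, or alternatively bypassing the algebra via a Burnside orbit-count on $(\{\pm 1, \ldots, \pm n\})^t$ showing directly that $\BB_t(n) = \langle (\pi^\dagger)^t, 1_{\BSym_n}\rangle$---is the technical crux.
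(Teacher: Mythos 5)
Your overall architecture is the same as the paper's (bijection for $\BB_t(n)=\SB_t(n)$ and $\BBp_t(n)=\SBp_t(n)$, a Type~B branching lemma giving $\MB_t(n)=\langle \pi^{\dagger t},1_{\BSym_n}\rangle$, then a descent-algebra isometry), and your first two steps are essentially correct and match Lemma~\ref{lemma:BS} and Lemma~\ref{lemma:branchingB}. But the step you yourself flag as ``the technical crux'' --- showing that $\Xi^\dagger=\sum_m(\rho_m^{-1}+\bar\rho_m^{-1})$ lies in $\D(\BSym_n)$ with Solomon image $\pi^\dagger$ --- is left unresolved, and this is exactly where the paper's proof does its real work (Lemmas~\ref{lemma:interval} and~\ref{lemma:XiB}). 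The difficulty you describe is an artefact of implicitly taking the short simple root at position $1$ (your $s_0=(-1,1)=\bar\rho_1$). The paper instead chooses the simple system $\alpha_i=e_i-e_{i+1}$ ($1\le i<n$) together with $\beta=e_n$, i.e.\ the sign-change reflection is $(-n,n)$, at the \emph{bottom} of the deck. With this choice a direct check (Lemma~\ref{lemma:interval}) shows that every $\rho_m^{-1}$ with $m\ge 2$ and every $\bar\rho_m^{-1}$ has descent set exactly $\{\alpha_1\}$, so $\Xi^\dagger$ is the single canonical basis element $\Xi^\dagger_{\{\alpha_1\}}$, and its image under Solomon's homomorphism is the permutation character on the cosets of the parabolic subgroup generated by $s_{\alpha_2},\ldots,s_{\alpha_{n-1}},s_\beta$, which is precisely $\BSym_{n-1}=\{\sigma:1\sigma=1\}$, hence $\pi^\dagger$. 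Your factorisation $\Xi^\dagger=(\id+s_0)\sum_m\rho_m^{-1}$ does not by itself show membership in $\D(\BSym_n)$ (a product of a Type~B descent element with a Type~A one need not be analysed this way), so as written the argument does not close.

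Two further points. First, even granting the identification of $\Xi^\dagger$, you invoke the isometry~\eqref{eq:iso}, which the paper establishes only for $\Sym_n$ via Blessenohl--Schocker; for $\BSym_n$ you need the corresponding fact that the Solomon epimorphism $\D(\BSym_n)\to\Cl(\BSym_n)$ is an isometry for the form $(-,-)$, which the paper quotes from Hohlweg, and you must also treat $n=0,1$ separately since the descent-algebra argument needs $n\ge 2$. Second, your suggested ``bypass'' by a Burnside orbit count on $(\{\pm1,\ldots,\pm n\})^t$ would indeed give $\BB_t(n)=\langle\pi^{\dagger t},1_{\BSym_n}\rangle$ and hence the unprimed chain of equalities, but it does not handle the primed case: $\theta^\dagger=\pi^\dagger-1_{\BSym_n}$ is a virtual character, not a permutation character, and reducing $\langle\theta^{\dagger t},1_{\BSym_n}\rangle$ to $\BBp_t(n)$ by binomial expansion is an identity essentially equivalent to the theorem itself. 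So the proposal is incomplete at the decisive lemma.
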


The proof of this theorem follows the same general plan as the proof of Theorem~\ref{thm:main}
so we shall present it quite briefly. The most important difference is that we must make
explicit use of the underlying root system.

\subsection{Proof of Theorem~\ref{thm:mainB}}
The proof of Lemma~\ref{lemma:BS} used a bijection between
set partitions of $\{1,\ldots, t\}$ into at most $n$ parts and sequences
of~$t$ random-to-top shuffles leaving a deck of $n$ cards invariant. This bijection
can be modified to give
a bijection between the marked set partitions counted by $\BB_t(n)$ and the shuffle
sequences counted by $\SB_t(n)$: given a marked set partition~$\mathcal{P}$,
the corresponding shuffle flips the orientation of the card lifted at time $s \in \{1,\ldots, t\}$
if and only if $s$ is a marked element of~$\mathcal{P}$. This map restricts to
 a bijection between the marked set partitions counted by $\BBp_t(n)$ and the shuffle
sequences counted by $\SBp_t(n)$. Thus $\BB_t(n) = \SB_t(n)$ and $\BBp_t(n) = \SBp_t(n)$
for all $t$, $n \in \N_0$.

Let $\BSym_{n-1} = \{ \sigma \in \BSym_{n} : 1 \sigma = 1\}$ 
and let $\pi^\dagger = 1_{\BSym_{n-1}} \ind^{\BSym_n}$. Inducing via the
subgroup $\langle (-1,1) \rangle \times \BSym_{n-1}$ of $\BSym_n$ one finds that
\[ \pi^\dagger = \chi^{( (n) \sep \varnothing )} + \chi^{( (n-1,1) \sep \varnothing)}
 + \chi^{( (n-1) \sep (1) )}.\]
Let $\theta^\dagger = \pi^\dagger - 1_{\BSym_n}$.
The analogue of Lemma~\ref{lemma:branching} is as follows.

\begin{lemma}\label{lemma:branchingB}
If $(\lambda, \lambda^\star)$ and $(\mu, \mu^\star)$ are double partitions of $n \in \N$ then
\begin{align*}
\MB_t\bigl( (\lambda, \lambda^\star), (\mu,\mu^\star) \bigr) &= \langle \chi^{(\lambda \sep \lambda^\star)}
\pi^t, \chi^{(\mu \sep  \mu^\star)} \rangle, \\
\MBp_t\bigl( (\lambda, \lambda^\star), (\mu,\mu^\star) \bigr) &=
\langle \chi^{(\lambda \sep \lambda^\star)} \theta^t, \chi^{(\mu \sep \mu^\star)} \rangle.
\end{align*}
\end{lemma}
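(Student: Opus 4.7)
The plan is to mimic the proof of Lemma \ref{lemma:branching}, replacing the Branching Rule for $\Sym_n$ with its Type B analogue. Taking $t = 1$ first, the projection formula and Frobenius reciprocity combine to give
\begin{equation*}
\langle \chi^{(\lambda \sep \lambda^\star)} \pi^\dagger, \chi^{(\mu \sep \mu^\star)} \rangle
= \bigl\langle \chi^{(\lambda \sep \lambda^\star)}\res_{\BSym_{n-1}},\, \chi^{(\mu \sep \mu^\star)}\res_{\BSym_{n-1}} \bigr\rangle_{\BSym_{n-1}}.
\end{equation*}
The Type B branching rule asserts that $\chi^{(\lambda \sep \lambda^\star)}\res_{\BSym_{n-1}}$ is the sum of those $\chi^{(\nu \sep \nu^\star)}$ for which $(\nu,\nu^\star)$ is obtained from $(\lambda,\lambda^\star)$ by removing a single removable box from exactly one of the two Young diagrams; this follows from the induced-character construction of $\chi^{(\lambda \sep \lambda^\star)}$ recalled in \S\ref{sec:typeBstatement} together with the classical Branching Rule applied to each of $\chi^\lambda$ and $\chi^{\lambda^\star}$. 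Substituting, the right-hand side counts the intermediate double partitions $(\nu,\nu^\star)$ appearing in both restrictions, which is precisely the number of double-moves from $(\lambda,\lambda^\star)$ to $(\mu,\mu^\star)$. A routine induction on $t$, factoring $(\pi^\dagger)^t = (\pi^\dagger)^{t-1}\cdot\pi^\dagger$, extends the identity to all $t \in \N_0$.

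For the second identity, expanding $\theta^\dagger = \pi^\dagger - 1_{\BSym_n}$ yields
\begin{equation*}
\langle \chi^{(\lambda \sep \lambda^\star)} \theta^\dagger, \chi^{(\mu \sep \mu^\star)} \rangle
= \MB_1\bigl((\lambda,\lambda^\star),(\mu,\mu^\star)\bigr) - \delta_{(\lambda,\lambda^\star),(\mu,\mu^\star)}.
\end{equation*}
The definition of \emph{exceptional} has been arranged so that every double partition supports exactly one exceptional double-move, and this move is always a self-loop---using the diagram of $\lambda$ when it is non-empty, and that of $\lambda^\star$ otherwise. Hence $\MBp_1 = \MB_1 - \delta$ pointwise, giving the $t=1$ case, and a second induction on~$t$ completes the proof.

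I expect the only substantive obstacle to be cleanly articulating the Type B branching rule in the form needed above; once it is in place, the ring-theoretic manipulations transfer verbatim from the Type A proof, and the subtraction of $1_{\BSym_n}$ that turns $\pi^\dagger$ into $\theta^\dagger$ matches the unique exceptional double-move at each double partition by design.
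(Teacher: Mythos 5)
Your proof is correct and follows essentially the same route as the paper: the Type B Branching Rule (justified, as in the paper, by the induced-character construction together with the classical Branching Rule), Frobenius reciprocity for the case $t=1$, the observation that each double partition admits exactly one exceptional double-move (necessarily a self-loop) so that subtracting $1_{\BSym_n}$ gives the primed count, and induction on $t$. No gaps worth noting.
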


\begin{proof}
The Branching Rule for $\BSym_n$ is
$\chi^{(\lambda \sep \lambda^\star)}\!\!\res_{\BSym_{n-1}}
= \sum_{(\nu,\nu^\star)} \chi^{(\nu \sep \nu^\star)}$ where the sum is over all double partitions
$(\nu,\nu^\star)$ obtained by removing a single box from either the Young diagram of $\lambda$
or the Young diagram of $\lambda^\star$.
(It is routine to prove this using the theory in \cite[\S 4.3]{JK} and
the Branching Rule for $\Sym_n$.)
The case $t=1$ of the first part of the lemma now
follows by Frobenius reciprocity, and the second part is proved similarly.
The general case then follows by induction on $t$.
\end{proof}

The group $\BSym_n$ acts on the $n$-dimensional real vector
space $E = \langle e_1, \ldots, e_n \rangle$ by $e_i \sigma = \pm e_j$ where $j = |i \sigma|$
and the sign is the sign of $i\sigma$.
Let $\Phi^+ = \{e_i - e_j : 1 \le i < j \le n \} \cup \{e_i + e_j : 1 \le i < j \le n\}
\cup \{ e_k : 1 \le k \le n\}$. Then $\Phi^+$
is the set of positive roots in a root system of type B, having simple roots
$\alpha_i = e_i - e_{i+1}$ for $1 \le i < n$
and $\beta = e_n$.
Let $\D(\BSym_n)$ be the descent subalgebra of the rational group algebra $\Q\!\BSym_n$,
as defined in \cite[Theorem~1]{Solomon}, for this choice of simple roots. Thus $\D(\BSym_n)$ has
as a basis the elements $\Xi^\dagger_I$ defined by
\[ \Xi_I^\dagger = \bigl\{ \sigma \in \BSym_n : \{ \alpha \in \Phi^+ : \alpha \sigma \in -\Phi^+ \}
\subseteq I \} \bigr\} \]
for $I \subseteq \{ \alpha_1, \ldots, \alpha_{n-1}, \beta \}$.

We need the following basic lemma describing descents in Type B for our choice of simple roots.

\begin{lemma}\label{lemma:interval}
Let $\sigma \in \BSym_n$. If $\ell$ and $m$ are such that $\alpha_\ell \sigma, \ldots,
\alpha_{m-1} \sigma \in \Phi^+$ then either $\ell \sigma, \ldots, m\sigma$ all have the same sign
and $\ell \sigma < \ldots < m \sigma$,
 or there exists a unique
$q \in \{\ell, \ldots, m-1\}$ such that $1 \le \ell \sigma < \ldots < q \sigma$
and $(q+1) \sigma < \ldots < m \sigma \le -1$.
\end{lemma}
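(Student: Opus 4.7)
The plan is a direct case analysis on the signs of the consecutive values $i\sigma$ and $(i+1)\sigma$ for each $i \in \{\ell, \ldots, m-1\}$. Recalling that $e_i\sigma = \sgn(i\sigma)\,e_{|i\sigma|}$, I would expand
\[
\alpha_i\sigma \;=\; e_i\sigma - e_{i+1}\sigma \;=\; \sgn(i\sigma)\,e_{|i\sigma|} \;-\; \sgn((i+1)\sigma)\,e_{|(i+1)\sigma|}
\]
and classify this vector according to the pair of signs. Comparing each of the four resulting expressions with the explicit description of $\Phi^+$, one finds that $\alpha_i\sigma \in \Phi^+$ precisely when either (a)~$i\sigma$ and $(i+1)\sigma$ have the same sign and $i\sigma < (i+1)\sigma$ (as elements of $\Z$), or (b)~$i\sigma > 0$ and $(i+1)\sigma < 0$. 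Equivalently, the only forbidden local configurations are the ``sign rise'' with $i\sigma < 0$ and $(i+1)\sigma > 0$, and the equal-sign case with $i\sigma > (i+1)\sigma$.

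Next I would apply this local criterion to every $i \in \{\ell,\ldots,m-1\}$ under the hypothesis that each $\alpha_i\sigma$ lies in $\Phi^+$. The key structural consequence is that the sign sequence of $\ell\sigma, (\ell+1)\sigma, \ldots, m\sigma$ permits no negative-to-positive transition. Hence this list decomposes into an initial (possibly empty) run of positive values followed by a (possibly empty) run of negative values, and within each run the values are strictly increasing in $\Z$ by case~(a). Note that on the negative run, strict increase in $\Z$ corresponds to strict decrease of absolute values; this bookkeeping between $<$ on $\Z$ and the sign/absolute-value decomposition is the only subtle point, and it is routine rather than conceptual.

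To finish, I would split into the two alternatives of the lemma. If one of the two runs is empty, all of $\ell\sigma,\ldots,m\sigma$ share a common sign and are strictly increasing, giving the first alternative. Otherwise, let $q$ be the last index of the positive run, so that $q\sigma \ge 1$ and $(q+1)\sigma \le -1$. Uniqueness of $q$ is immediate from the absence of negative-to-positive transitions: once the sequence becomes negative it cannot return to being positive. The within-run monotonicity then gives the two chains $1 \le \ell\sigma < \cdots < q\sigma$ and $(q+1)\sigma < \cdots < m\sigma \le -1$, which is the second alternative. The entire argument is just a careful unwinding of the definition of the Type~B descent condition, with the only mild hazard being the sign/absolute-value interchange on the negative stretch.
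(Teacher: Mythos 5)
Your proposal is correct and follows essentially the same route as the paper: you establish the local criterion that $\alpha_i\sigma \in \Phi^+$ exactly when $i\sigma$ and $(i+1)\sigma$ have the same sign with $i\sigma < (i+1)\sigma$, or $i\sigma > 0 > (i+1)\sigma$, and then read off the run structure of the sign sequence, which is precisely the paper's argument (stated there as cases (a)--(c) with the conclusion left as ``follows easily''). Your explicit handling of the sign/absolute-value bookkeeping on the negative run is a fine, slightly more detailed write-up of the same proof.
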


\begin{proof}
It is routine to check that $\alpha_i \sigma \in \Phi^+$ if and only if one of
\begin{itemize}
\item[(a)] $i \sigma \in \{1,\ldots, n\}$, $(i+1) \sigma \in \{1,\ldots, n\}$ and $i \sigma < (i+1) \sigma$;
\item[(b)] $i \sigma \in \{1,\ldots, n\}$, $(i+1) \sigma \in \{-1,\ldots, -n\}$;
\item[(c)] $i \sigma \in \{-1,\ldots,-n\}$, $(i+1) \sigma \in \{-1,\ldots, -n\}$ and
$i\sigma < (i+1)\sigma$.
\end{itemize}
The lemma now follows easily.
\end{proof}

Let $\Xi^\dagger = \sum_{m=1}^n (\rho_m^{-1} + {\bar{\rho}^{-1}_m})$ and let $\Delta^\dagger = \Xi - \id_{\BSym_n}$.

\begin{lemma}\label{lemma:XiB}
Let $n \ge 2$. Then
$\Xi^\dagger$, $\Delta^\dagger \in \D(\BSym_n)$ and under the canonical algebra epimorphism
$\D(\BSym_n) \rightarrow \Cl(\BSym_n)$ we have $\Xi^\dagger \mapsto \pi^\dagger$ and
$\Delta^\dagger \mapsto \pi^\dagger - 1_{\Sym_n}$.
\end{lemma}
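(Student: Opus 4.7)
My plan is to identify $\Xi^\dagger$ with the descent basis element $\Xi^\dagger_{\{\alpha_1\}}\in\D(\BSym_n)$ defined in the excerpt, and then read off both images from Solomon's theorem, which sends $\Xi^\dagger_I$ to $1_{W_{I^c}}\!\ind^{\BSym_n}$, where $I^c$ denotes the complement of $I$ in the set of simple roots. Once this identification is in hand, both claims follow: $\Xi^\dagger\in\D(\BSym_n)$ directly, and $\Delta^\dagger=\Xi^\dagger-\id_{\BSym_n}=\Xi^\dagger_{\{\alpha_1\}}-\Xi^\dagger_{\emptyset}$ as a difference of two descent basis elements; since $\id_{\BSym_n}$ maps to $1_{\BSym_n}$, the image of $\Delta^\dagger$ is obtained by subtraction.

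The central task is therefore to show $\Xi^\dagger=\Xi^\dagger_{\{\alpha_1\}}$. To describe the right-hand side I will apply Lemma~\ref{lemma:interval} with $\ell=2$ and $m=n$, combined with the extra condition $\beta\sigma\in\Phi^+$, which forces $n\sigma>0$ and so rules out the mixed-sign alternative of that lemma. The resulting characterisation is that $\sigma\in\BSym_n$ has descent set contained in $\{\alpha_1\}$ if and only if $0<2\sigma<3\sigma<\cdots<n\sigma$; the value $|1\sigma|$ is then forced to be the unique missing element of $\{1,\ldots,n\}$ and its sign is free, giving exactly $2n$ such $\sigma$. Reading off the cycle expressions in the definitions of $\rho_m$ and $\bar{\rho}_m$ shows that both $\rho_m^{-1}$ and $\bar{\rho}_m^{-1}$ send $i\mapsto i-1$ for $2\le i\le m$ and fix $\pm(m+1),\ldots,\pm n$, while $1\rho_m^{-1}=m$ and $1\bar{\rho}_m^{-1}=-m$. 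Hence each of these inverses satisfies the increasing condition, and as $m$ ranges over $\{1,\ldots,n\}$ the values $1\sigma=\pm m$ exhaust the $2n$ possibilities, so $\Xi^\dagger=\Xi^\dagger_{\{\alpha_1\}}$.

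Finally, the parabolic $W_{\{\alpha_2,\ldots,\alpha_{n-1},\beta\}}$ is generated by simple reflections that all fix $\pm 1$, and these jointly give a copy of $\BSym_{n-1}$ acting on the coordinates $\pm 2,\ldots,\pm n$; this is exactly the point stabiliser $\BSym_{n-1}=\{\sigma:1\sigma=1\}$ used to define $\pi^\dagger$. Applying the canonical epimorphism therefore yields $\Xi^\dagger\mapsto 1_{\BSym_{n-1}}\!\ind^{\BSym_n}=\pi^\dagger$ and hence $\Delta^\dagger\mapsto\pi^\dagger-1_{\BSym_n}=\theta^\dagger$ (the subscript reading $1_{\Sym_n}$ in the printed statement appears to be a typo for $1_{\BSym_n}$). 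The one delicate point is the reduction of the descent condition to $0<2\sigma<\cdots<n\sigma$; this is precisely why Lemma~\ref{lemma:interval} has been prepared in advance, and armed with it the remaining cycle bookkeeping is routine.
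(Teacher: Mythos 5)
Your proposal is correct and follows essentially the same route as the paper: Lemma~\ref{lemma:interval} together with $\beta\sigma\in\Phi^+$ identifies the support of $\Xi^\dagger_{\{\alpha_1\}}$ as exactly the $2n$ inverses $\rho_m^{-1},\bar{\rho}_m^{-1}$, the parabolic generated by the reflections in $\alpha_2,\ldots,\alpha_{n-1},\beta$ is the stabiliser $\BSym_{n-1}$, and Solomon's theorem then gives the images (and you are right that $1_{\Sym_n}$ in the statement is a typo for $1_{\BSym_n}$). The only cosmetic difference is that you characterise descent set $\subseteq\{\alpha_1\}$ in one stroke ($0<2\sigma<\cdots<n\sigma$ with the sign of $1\sigma$ free) and conclude by counting, whereas the paper splits into the two sign cases for $1\sigma$; the substance is the same.
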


\begin{proof}
By Lemma~\ref{lemma:interval}, if
 $\alpha_i \sigma \in \Phi^+$ for $2 \le i < n$
then there exists a unique $p \in \{1,\ldots, n\}$ such that $\{2,\ldots, p\} \sigma \subseteq \{1,\ldots, n\}$ and
$\{p+1,\ldots, n\} \sigma \subseteq \{-1,\ldots, -n\}$. If in addition $\beta \sigma \in \Phi^+$,
we must have $p = n$. Therefore
$\alpha_1$ is the unique simple root $\delta$ such
that $\delta \sigma \not\in \Phi^+$ if and only if $\{2,\ldots, n\}\sigma \subseteq \{1,\ldots, n\}$
and \emph{either}
\begin{itemize}
\item[(1)] $1 \sigma \in \{1,\ldots, n\}$ and $1\sigma > 2 \sigma < \ldots < n \sigma$, \emph{or}
\item[(2)] $1 \sigma \in \{-1,\ldots, -n\}$ and $2 \sigma < \ldots < n\sigma$.
\end{itemize}
The permutations in Case (1) are $\{\rho_2^{-1}, \ldots, \rho_n^{-1} \}$ and the permutations
in Case (2) are $\{\bar{\rho}_1^{-1}, \ldots, \bar{\rho}_n^{-1} \}$. Hence $\Xi^\dagger
= \Xi^\dagger_{\{\alpha_1\}}$. This proves the first part of the lemma.

The reflections in the simple roots $\alpha_2, \ldots, \alpha_{n-1}$ and $\beta$ are
$s_{\alpha_i} = (i,i+1)(-i,-(i+1))$ for $2 \le i \le n-1$ and $s_\beta = (-n,n)$.
These generate the subgroup  $\BSym_{n-1}$ defined above. Hence $\BSym_{n-1}$ is a parabolic
subgroup of $\BSym_n$ and
the permutation character of $\BSym_{n}$ acting on the cosets of $\BSym_{n-1}$
is $\pi^\dagger$.
By \cite[Theorem~1]{Solomon}, under the canonical algebra epimorphism
$\D(\BSym_n) \rightarrow \Cl(\BSym_n)$, we have $\Xi^\dagger \mapsto \pi^\dagger$. This completes the proof.
\end{proof}

We are now ready to prove the second equality in Theorem~\ref{thm:mainB}. 
We require the fact that the
epimorphism $\D(\BSym_n) \rightarrow \Cl(\BSym_n)$ is an isometry
with respect to the bilinear form on $\D(\BSym_n)$ defined by restriction of the
form $(-,-)$ on $\Q\Sym_{2n}$; 
this is proved in
\cite[Theorem 3.1]{HohlwegSolomon}.

\begin{lemma}\label{lemma:daggerSM} 
For $t$, $n \in \N_0$ we have $\SB_t(n) = \MB_t(n)$ and $\SBp_t(n) = \MBp_t(n)$.
\end{lemma}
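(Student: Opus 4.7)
The plan is to copy the Type A argument of Lemma~\ref{lemma:SM} almost verbatim, using the ingredients already assembled. After disposing of the trivial cases $n = 0$ and $n = 1$ (where $\BSym_n$ is small enough for direct verification), assume $n \ge 2$ so that Lemma~\ref{lemma:XiB} applies. The key identity to establish, playing the role of equation~\eqref{eq:iso} in the Type A proof, is
\begin{equation*}
[\id_{\BSym_n}]\Gamma \;=\; (\Gamma, \id_{\BSym_n}) \;=\; \langle \phi, 1_{\BSym_n}\rangle
\end{equation*}
for any $\Gamma \in \D(\BSym_n)$ mapping to $\phi \in \Cl(\BSym_n)$ under the canonical epimorphism; this is immediate from the Hohlweg--Solomon isometry \cite[Theorem~3.1]{HohlwegSolomon}.

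With this in hand, I would apply the identity to $\Gamma = (\Xi^\dagger)^t$ and $\Gamma = (\Delta^\dagger)^t$. Expanding $\Xi^\dagger = \sum_{m=1}^n (\rho_m^{-1} + \bar{\rho}_m^{-1})$, the coefficient $[\id_{\BSym_n}](\Xi^\dagger)^t$ counts tuples $(g_1,\ldots,g_t)$, with each $g_i$ the inverse of an oriented random-to-top shuffle, satisfying $g_1\cdots g_t = \id_{\BSym_n}$; setting $\sigma_i = g_{t+1-i}^{-1}$ gives a bijection with the sequences counted by $\SB_t(n)$. Since $\Delta^\dagger = \Xi^\dagger - \id_{\BSym_n}$ precisely deletes the summand $\rho_1^{-1} = \id_{\BSym_n}$, the same bijection yields $[\id_{\BSym_n}](\Delta^\dagger)^t = \SBp_t(n)$. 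By Lemma~\ref{lemma:XiB} the images of $(\Xi^\dagger)^t$ and $(\Delta^\dagger)^t$ in $\Cl(\BSym_n)$ are $(\pi^\dagger)^t$ and $(\theta^\dagger)^t$ respectively, so the displayed identity rewrites these coefficients as $\langle (\pi^\dagger)^t, 1_{\BSym_n}\rangle$ and $\langle (\theta^\dagger)^t, 1_{\BSym_n}\rangle$.

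To finish, I would observe that $\chi^{((n)\sep \varnothing)}$ is the trivial character of $\BSym_n$: since $\ell^\star = 0$, the defining construction reduces to $\Inf^{\BSym_n}_{\Sym_n} \chi^{(n)} = 1_{\BSym_n}$. Therefore Lemma~\ref{lemma:branchingB} with $(\lambda,\lambda^\star) = (\mu,\mu^\star) = ((n),\varnothing)$ identifies the two inner products as $\MB_t(n)$ and $\MBp_t(n)$, and combining these identifications gives $\SB_t(n) = \MB_t(n)$ and $\SBp_t(n) = \MBp_t(n)$.

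The only real obstacle is conceptual bookkeeping rather than a technical step: all the serious work---identifying $\Xi^\dagger$ as a descent-algebra basis element via the Type B root system, verifying that $\BSym_{n-1}$ is the relevant parabolic, and establishing the Hohlweg--Solomon isometry---has been packaged into Lemma~\ref{lemma:XiB} and the cited theorem, so the present lemma becomes an essentially formal consequence. The minor subtleties to double-check are the direction of the reindexing $g_i \leftrightarrow \sigma_i$ (needed because $\Xi^\dagger$ is built from inverse shuffles) and the boundary case $n = 1$ where Lemma~\ref{lemma:XiB} is not formally stated.
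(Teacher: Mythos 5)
Your proposal is correct and matches the paper's argument essentially verbatim: dispose of $n=0,1$ directly, then for $n \ge 2$ use Lemma~\ref{lemma:XiB}, the Hohlweg--Solomon isometry to transport the coefficient of the identity to $\langle (\pi^\dagger)^t, 1_{\BSym_n}\rangle$ and $\langle (\theta^\dagger)^t, 1_{\BSym_n}\rangle$, and Lemma~\ref{lemma:branchingB} at $((n),\varnothing)$ to identify these with $\MB_t(n)$ and $\MBp_t(n)$. The points you flag (the inverse/reindexing bookkeeping and the $n=1$ boundary case) are handled in the paper exactly as you suggest.
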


\begin{proof}
If $n=0$ the result is clear.
When $n = 1$ the oriented top-to-random shuffles are the identity and $(-1,1)$
and it is easily checked that $\SB_0(1) = \MB_0(1) = 1$ and
$\SB_t(1) = \MB_t(1) = 2^{t-1}$ for all $t \in \N$. Similarly
$\SBp_t(1) = \MBp_t(1) = 1$ if $t$ is even and $\SBp_t(1) = \MBp_t(1) = 0$ if $t$ is odd.
When $n \ge 2$ we follow the
proof of Lemma~\ref{lemma:SM}, using that
$[\id_{\BSym_n}] \Xi^{\dagger t} = \SB_t(n)$ and $[\id_{\BSym_n}] \Delta^{\dagger t} = \SBp_t(n)$.
\end{proof}

This completes the proof of Theorem~\ref{thm:mainB}. 

%
%

\subsection{Eigenvalues of the oriented $k$-shuffle}
The analogue for Type~B of the~$k$-shuffle is most conveniently
defined using our interpretation of elements
of $\BSym_n$ as shuffles (with flips) of a deck of $n$ cards.

\begin{definition}
Let $n$, $k \in \N$ with $k \le n$.
An \emph{oriented $k$-shuffle} is performed as follows.
Remove $k$ cards from the deck. Then choose any $j \in \{0,1,\ldots,k\}$ of the
$k$ cards
and flip these $j$ cards  over as a block.
Place the $j$ flipped cards on top of the deck, and then put the $k-j$
unflipped cards on top of them.
\end{definition}

Thus there are $2^k\binom{n}{k}$ oriented $k$-shuffles.
After an oriented $k$-shuffle that flips $j$ cards, the newly
flipped cards occupy positions $k-j+1, \ldots, k$, and appear in the reverse of their
order in the original deck. 

%

For $k < n$, let
$\Xi^\dagger_k = \Xi^\dagger_{\{\alpha_k\}} \in \D(\BSym_n)$
and let $\Xi^\dagger_n = \Xi^\dagger_{\{\beta\}} \in \D(\BSym_n)$.
The following lemma is proved by extending the argument used in the proof of
Lemma~\ref{lemma:XiB}.

\begin{lemma}\label{lemma:XiBgen}
Let $k \le n$.
Let $\sigma \in \BSym_n$. Then $\sigma$ is in the support of $\Xi^\dagger_k$ if and only
if $\sigma^{-1}$ is
an oriented $k$-shuffle.
\end{lemma}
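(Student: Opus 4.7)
The plan is to characterise the support of $\Xi^\dagger_k$ using Lemma~\ref{lemma:interval} and match it directly to the shuffle description. This extends the analysis of $\Xi^\dagger = \Xi^\dagger_{\{\alpha_1\}}$ carried out in the proof of Lemma~\ref{lemma:XiB}.

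First I would treat $k<n$, where $\Xi^\dagger_k = \Xi^\dagger_{\{\alpha_k\}}$. Membership means $\alpha_i\sigma\in\Phi^+$ for every $i\in\{1,\ldots,n-1\}\setminus\{k\}$ and $\beta\sigma\in\Phi^+$; the latter forces $n\sigma>0$. Applying Lemma~\ref{lemma:interval} with $\ell=k+1$, $m=n$ then rules out the mixed-sign case (which would give $n\sigma\le -1$), yielding $0<(k+1)\sigma<\cdots<n\sigma$. Applying it with $\ell=1$, $m=k$ shows that $1\sigma,\ldots,k\sigma$ splits as a positive increasing prefix of length $k-j$ followed by a negative increasing suffix of length $j$, for some $j\in\{0,\ldots,k\}$ (either block may be empty).

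Next I would translate this into the shuffle performed by $\tau=\sigma^{-1}$. Writing the one-line notation of $\sigma$ as $(u_1,\ldots,u_{k-j},-v_j,\ldots,-v_1,w_1,\ldots,w_{n-k})$ with $u_1<\cdots<u_{k-j}$, $v_1<\cdots<v_j$ and $w_1<\cdots<w_{n-k}$ partitioning $\{1,\ldots,n\}$, and using $(-i)\sigma=-(i\sigma)$, one checks directly that $\tau$ moves the $k$ cards originally at positions $\{u_1,\ldots,u_{k-j}\}\cup\{v_1,\ldots,v_j\}$ to the top of the deck: the $u$-cards land face-up at new positions $1,\ldots,k-j$ in their original relative order; the $v$-cards land face-down at positions $k-j+1,\ldots,k$ in reversed order (reversed because their one-line entries are negative and increasing); and the remaining cards keep their relative order in positions $k+1,\ldots,n$. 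This is exactly an oriented $k$-shuffle flipping $j$ cards. The case $k=n$ is identical, except that $\Xi^\dagger_{\{\beta\}}$ puts no constraint on the sign of $n\sigma$ and the $w$-block is empty.

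For the converse I would observe that $\tau\mapsto\tau^{-1}$ from oriented $k$-shuffles into $\mathrm{supp}(\Xi^\dagger_k)$ is plainly injective, and then conclude by counting: there are $\sum_{j=0}^k\binom{n}{k}\binom{k}{j}=2^k\binom{n}{k}$ oriented $k$-shuffles, and the one-line description above gives precisely the same count on the other side. The main obstacle is only the bookkeeping of sign conventions and the reversal forced on the flipped block; the substantive combinatorial content is already packaged into Lemma~\ref{lemma:interval}.
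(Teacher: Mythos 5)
Your forward direction is essentially the paper's argument: from the condition that the only possible descents of $\sigma$ lie in $\{\alpha_k\}$ (resp.\ $\{\beta\}$ when $k=n$) you apply Lemma~\ref{lemma:interval} to positions $1,\ldots,k$ and, when $k<n$, to positions $k+1,\ldots,n$ together with $\beta\sigma\in\Phi^+$, and then read off that $\sigma^{-1}$ is an oriented $k$-shuffle; your explicit one-line computation of $\sigma^{-1}$ (unflipped block in order, flipped block reversed in positions $k-j+1,\ldots,k$, remainder in order) is if anything more detailed than the paper's verbal identification. The problem is the converse. As written it is circular: you describe $\tau\mapsto\tau^{-1}$ as a map from oriented $k$-shuffles \emph{into} $\mathrm{supp}(\Xi^\dagger_k)$, but that its image lies in the support is precisely the implication you still have to prove. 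Read instead as a counting argument, it does not close either: what you have established is only the inclusion of $\mathrm{supp}(\Xi^\dagger_k)$ in the set of $\sigma$ with the stated one-line shape, hence the upper bound $|\mathrm{supp}(\Xi^\dagger_k)|\le 2^k\binom{n}{k}$. Injectivity of inversion together with an upper bound on the support cannot give surjectivity onto the $2^k\binom{n}{k}$ oriented $k$-shuffles; you need the lower bound, i.e.\ the reverse inclusion that every $\sigma$ of the one-line shape really has all its descents among $\{\alpha_k\}$. Note that the statement of Lemma~\ref{lemma:interval} is one-directional, so citing it alone cannot supply this.

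The repair is short, which is why this is a missing step rather than a wrong strategy. Using the characterisation (a)--(c) of $\alpha_i\sigma\in\Phi^+$ given in the proof of Lemma~\ref{lemma:interval}: consecutive positions inside the $u$-block, the $v$-block and the $w$-block fall under cases (a) or (c), the junction between the $u$- and $v$-blocks falls under case (b), and when $k<n$ we have $n\sigma=w_{n-k}\ge 1$, so $\beta\sigma\in\Phi^+$; hence every $\sigma$ of the one-line shape lies in the support, and your count of that set then really is the count of the support. Alternatively you could quote the standard Coxeter-theoretic fact that the elements whose descent set is contained in $\{\alpha_k\}$ form a transversal of the cosets of the parabolic subgroup $\Sym_k\times(C_2\wr\Sym_{n-k})$, whose index in $\BSym_n$ is $2^k\binom{n}{k}$. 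With either addition your argument is complete and agrees with the paper's, which asserts the identification of the derived one-line shapes with the inverses of the oriented $k$-shuffles directly rather than by counting.
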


\begin{proof}
Let $\sigma$ be in the support of $\Xi^\dagger_k$.
Lemma~\ref{lemma:interval} implies that either $1 \le 1 \sigma < \ldots < k \sigma$
or there exists a unique $j \in \{1,\ldots, k\}$
such that $1 \le 1 \sigma < \ldots < (j-1)\sigma$ and $j \sigma < \ldots < k\sigma \le -1$.
Moreover, if $k < n$, then since $\beta \sigma \in \Phi^+$, Lemma~\ref{lemma:interval} implies that
$1 \le (k+1) \sigma < \ldots < n \sigma$.

Thus in our card shuffling interpretation,
$\sigma$ takes the top $k$ cards, flips the bottom $j$ of these cards as a block, for some $j \in \{0,1,
\ldots, k\}$,
and then inserts the blocks of $k-j$ unflipped cards and $j$ flipped cards
into the deck, preserving the order within each block. These shuffles
are exactly the inverses of the oriented $k$-shuffles.
\end{proof}

Under the canonical epimorphism $\D(\BSym_n)
\rightarrow \Cl(\BSym_n)$, the image of $\Xi^\dagger_k$ is
the permutation character $\pi_k^\dagger$ of $\BSym_n$ acting on the cosets
of the parabolic subgroup $\Sym_k \times (C_2 \wr \Sym_{n-k})$
generated by the reflections in the simple roots other than $\alpha_k$ if $k < n$,
or other than $\beta$ if $k=n$.
A convenient model for this coset space
is the set $\Omega_k$ of $k$-subsets of the short roots $\{ \pm e_1, \ldots, \pm e_n \}$
that contain at most one element of each pair $\{ e_i, -e_i\}$.
The action of $\BSym_n$
on $\Omega_k$ is inherited from its action on $\langle e_1, \ldots, e_n\rangle$.
Using the notation for elements of $\BSym_n$ introduced immediately before
(4.1.12) in \cite{JK}, it is clear that a $k$-subset $A = \{s_j e_j : j \in J \}
\in \Omega_k$
is fixed by $(u_1, \ldots, u_n ; \tau) \in \BSym_n$ if and only if
(i)~$J$ is fixed by $\tau$ and (ii)~$u_j s_j = s_{j \tau}$ for each $j \in J$.

Let $P^\dagger(k)$ be the transition matrix of the Markov chain on
$\BSym_n$ in which each step is given by choosing one of the
$2^k\binom{n}{k}$ oriented
$k$-shuffles uniformly at random. Let $P^{\dagger'}(k)$ be the analogous transition matrix
when only non-identity shuffles are chosen.
The same argument used in \S\ref{sec:Phatarfod} now proves the following proposition.

\begin{proposition}\label{prop:eigenvaluesB} 
The eigenvalues of $P^\dagger(k)$
are $\pi^\dagger_k(\tau)/ 2^k\binom{n}{k}$
for $\tau \in \BSym_n$ and the eigenvalues of
$P^{\dagger'}(k)$ are $(\pi^\dagger_k(\tau)-1) / ( 2^k\binom{n}{k} - 1)$.\hfill$\qed$
\end{proposition}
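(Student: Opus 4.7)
The plan is to follow almost verbatim the argument given in Section 3 for Type A, with the Type B ingredients already assembled in Lemmas 4.6 and 4.7 and the isometry result of Hohlweg--Solomon. First, since $|\BSym_n| = 2^n n!$, I observe that $(\Tr P^\dagger(k)^t)/(2^n n!)$ equals the probability that $t$ independent, uniformly chosen oriented $k$-shuffles multiply to the identity in $\BSym_n$. By Lemma 4.7, the sum of the inverses of the $2^k\binom{n}{k}$ oriented $k$-shuffles is exactly the descent basis element $\Xi^\dagger_k \in \D(\BSym_n)$, which maps to the permutation character $\pi^\dagger_k$ under the canonical epimorphism $\D(\BSym_n) \rightarrow \Cl(\BSym_n)$.

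Next I invoke the analogue of equation~\eqref{eq:iso}, which is valid in Type B by \cite[Theorem 3.1]{HohlwegSolomon}: for any $\Gamma \in \D(\BSym_n)$ with image $\phi \in \Cl(\BSym_n)$ we have $[\id_{\BSym_n}]\Gamma = \langle \phi, 1_{\BSym_n}\rangle$. Applying this to $\Gamma = (\Xi^\dagger_k)^t$ and collecting constants,
\begin{align*}
\frac{\Tr P^\dagger(k)^t}{2^n n!} &= \Bigl( 2^k\tbinom{n}{k} \Bigr)^{-t} [\id_{\BSym_n}] (\Xi^\dagger_k)^t \\
&= \Bigl( 2^k\tbinom{n}{k} \Bigr)^{-t} \langle (\pi^\dagger_k)^t, 1_{\BSym_n} \rangle \\
&= \Bigl( 2^k\tbinom{n}{k} \Bigr)^{-t} \frac{1}{2^n n!} \sum_{\tau \in \BSym_n} \pi^\dagger_k(\tau)^t.
\end{align*}
If $\epsilon_1,\ldots,\epsilon_{2^n n!}$ are the eigenvalues of $P^\dagger(k)$, then the left-hand side is $\sum_i \epsilon_i^t/(2^n n!)$. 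Comparing power sums for all $t \in \N_0$ forces the equality of multisets
\[
\{ \epsilon_i \} = \bigl\{ \pi^\dagger_k(\tau)/2^k\tbinom{n}{k} : \tau \in \BSym_n \bigr\},
\]
which is the first claim.

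For the second claim I argue exactly as in Proposition~\ref{prop:eigenvaluesA}: since removing the identity from the support of the oriented $k$-shuffle distribution gives
\[ \Bigl( 2^k\tbinom{n}{k} - 1 \Bigr) P^{\dagger\prime}(k) = 2^k\tbinom{n}{k} P^\dagger(k) - I, \]
where $I$ is the identity matrix on $\BSym_n$, each eigenvalue $\pi^\dagger_k(\tau)/2^k\binom{n}{k}$ of $P^\dagger(k)$ yields the eigenvalue $(\pi^\dagger_k(\tau) - 1)/(2^k\binom{n}{k} - 1)$ of $P^{\dagger\prime}(k)$ on the same eigenvector.

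I do not anticipate any serious obstacle: Lemma~4.7 has already done the combinatorial work of identifying the support of $\Xi^\dagger_k$ with the oriented $k$-shuffles, and the isometry of the descent epimorphism for Type B, together with the parabolic description of $\pi^\dagger_k$ given after Lemma~4.7, packages the representation-theoretic side. The one point that deserves a brief sanity check is that $\Xi^\dagger_k$ really is a sum of \emph{inverses} of shuffles rather than of the shuffles themselves; this is harmless because the character values $\pi^\dagger_k(\tau)$ are invariant under inversion, so reindexing the sum over $\tau$ by $\tau \mapsto \tau^{-1}$ leaves the power sum identity intact.
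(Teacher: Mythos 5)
Your proposal is correct and follows essentially the same route as the paper: the paper proves Proposition~\ref{prop:eigenvaluesB} by exactly the argument of \S\ref{sec:Phatarfod}, using Lemma~\ref{lemma:XiBgen} to identify $\Xi^\dagger_k$ with the sum of inverses of oriented $k$-shuffles, the canonical epimorphism sending $\Xi^\dagger_k$ to $\pi^\dagger_k$, the Hohlweg--Solomon isometry to equate $[\id_{\BSym_n}](\Xi^\dagger_k)^t$ with $\langle (\pi^\dagger_k)^t, 1_{\BSym_n}\rangle$, and a comparison of power sums, with the primed case handled by the same affine relation between $P^{\dagger\prime}(k)$ and $P^\dagger(k)$ used in Proposition~\ref{prop:eigenvaluesA}. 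No gaps to report.
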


The two extreme cases are worth noting. It is clear that $\pi^\dagger_1(\sigma)
= \pi^\dagger(\sigma)$ for $\sigma \in \BSym_n$; the model $\Omega_1$ for the coset space
shows that $\pi^\dagger\bigl( (u_1,\ldots, u_n ; \tau) \bigr)$ is the number of
elements $e_i$ such that $u_i=1$ and $i\tau = i$.
An $n$-shuffle is the inverse of
the shuffle performed by separating the deck into two parts, flipping the part
containing the bottom card, and then riffle-shuffling the two parts.
By (i) and (ii) we see that $\pi_n^\dagger\bigl( (u_1,\ldots, u_n ; \tau) \bigr)$
is the number of sequences $(s_1,\ldots,s_n)$ such that $s_j \in \{+1,-1\}$
and $u_j s_j = s_{j \tau}$
for each $j$. Hence the corresponding
eigenvalue is $2^d$, where~$d$ is the number of orbits of $\tau$ on $\{1,\ldots, n\}$.

\section{Cheating random-to-top shuffles: Type D}\label{sec:typeD}

\subsection{Type D shuffles and partition moves}
For $n \in \N$ we define the Coxeter group $\DSym_n$ of Type D by
\[ \DSym_n = \bigl\{ \sigma \in \BSym_n : \text{$\bigl|\{1,\ldots, n\} \sigma \cap \{-1,\ldots, -n\}\bigr|$
is even} \bigr\}. \]
The shuffles and partition moves relevant to this case are defined as follows.
We shall use the superscript $\ddagger$ to denote quantities relevant to this type.

\begin{definition}
Let $n \in \N$ and let $k \le n$.
Let $\sigma \in \DSym_n$.
Then $\sigma$ is a \emph{cheating $k$-shuffle}
if and only if either $\sigma$ or $\sigma(-n,n)$ is a $k$-shuffle.
A cheating $1$-shuffle
will be called a \emph{cheating random-to-top shuffle}.
Let~$\SD_t(n)$ be the number of sequences of~$t$ cheating random-to-top shuffles
whose product is $\id_{\DSym_n}$. Define $\SDp_t(n)$ analogously, excluding the
identity permutation $\rho_1$.
\end{definition}

Thus the cheating $k$-shuffles are obtained
from the oriented $k$-shuffles by composing with
a `cheating' flip of the bottom card, whenever this is necessary to
arrive in the group $\DSym_n$. It easily follows that the cheating
random-to-top shuffles are
\[ \{ \rho_m : 1 \le m \le n \} \cup \{ \bar{\rho}_m (-n,n) : 1 \le m \le n \} \]
where $\rho_m$ and $\bar{\rho}_m$ are as defined in
\S\ref{sec:typeBstatement}.
Note that a cheating random-to-top shuffle either flips no cards, or flips both the lifted card and
the (new, in the case of $\bar{\rho}_n$) bottom card.

\begin{definition}
Let $t$, $n \in \N_0$.
Let $\MD_t(n)$ be the number of sequences of $t$ double-moves that start at $((n), \varnothing)$
and finish at either $((n), \varnothing)$ or $(\varnothing, (n))$. Define $\MDp_t(n)$
analogously, considering only non-exceptional double-moves.
\end{definition}

 As in the case of Type B, the sequences of shuffles counted by $\SD_t(n)$ and $\SDp_t(n)$ are
in bijection with certain marked set partitions of $\{1,\ldots, t\}$ into at most $n$ parts, but
it now seems impossible to give a more direct description of these partitions.
Our strongest analogue of Theorem~\ref{thm:main} is as follows.

\begin{theorem}\label{thm:mainD}
For all $t \in \N_0$ and $n \in \N$ such that $n\ge 2$ we have $\SD_t(n) = \MD_t(n)$ and $\SDp_t(n) = \MDp_t(n)$.
\end{theorem}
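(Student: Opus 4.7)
The plan is to mimic the Type~B argument of Lemma~\ref{lemma:daggerSM}, working inside the descent subalgebra $\D(\DSym_n)$ and exploiting the $\BSym_n/\DSym_n$ grading of the characters of $\BSym_n$.

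First I would translate $\MD_t(n)$ and $\MDp_t(n)$ into inner products of characters of $\DSym_n$. The key observation is that $\chi^{((n)\sep\varnothing)} = 1_{\BSym_n}$ while $\chi^{(\varnothing\sep(n))}$ is the linear character of $\BSym_n$ with kernel $\DSym_n$, so that $\chi^{((n)\sep\varnothing)} + \chi^{(\varnothing\sep(n))} = 1_{\DSym_n}\!\ind^{\BSym_n}$. Combining this with Lemma~\ref{lemma:branchingB} and Frobenius reciprocity gives
\[
\MD_t(n) = \langle (\pi^\ddagger)^t, 1_{\DSym_n} \rangle, \qquad \MDp_t(n) = \langle (\pi^\ddagger - 1_{\DSym_n})^t, 1_{\DSym_n} \rangle,
\]
where $\pi^\ddagger := \pi^\dagger\!\res_{\DSym_n}$. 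A brief Mackey calculation, using that $\BSym_{n-1}\cdot\DSym_n = \BSym_n$, shows $\pi^\ddagger = 1_{\DSym_{n-1}}\!\ind^{\DSym_n}$, where $\DSym_{n-1} := \BSym_{n-1} \cap \DSym_n$ is itself the Coxeter group of Type~D and rank $n-1$.

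The central technical step is to identify the sum
\[
\Xi^\ddagger := \sum_{m=1}^n \bigl( \rho_m^{-1} + (\bar{\rho}_m(-n,n))^{-1} \bigr)
\]
with the basis element $\Xi^\ddagger_{\{\alpha_1\}}$ of $\D(\DSym_n)$ associated to the complement of $\{\alpha_1\}$ among the Type~D simple roots $\alpha_1, \ldots, \alpha_{n-1}, \beta'$, where $\beta' = e_{n-1} + e_n$. Following Lemma~\ref{lemma:XiB}, I would apply Lemma~\ref{lemma:interval} to the positive-root conditions on $\alpha_2, \ldots, \alpha_{n-1}$ and then handle $\beta'$ by direct case analysis on the signs and magnitudes of $(n-1)\sigma$ and $n\sigma$. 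The outcome should be that $\sigma \in \DSym_n$ satisfies $\alpha_i\sigma \in \Phi^+$ for all $i \ge 2$ and $\beta'\sigma \in \Phi^+$ if and only if $\sigma^{-1}$ is one of the $2n$ cheating random-to-top shuffles; the counts agree because the parabolic subgroup generated by $\{\alpha_2, \ldots, \alpha_{n-1}, \beta'\}$ is $\DSym_{n-1}$, and so $|\DSym_n : \DSym_{n-1}| = 2n$. By \cite[Theorem~1]{Solomon}, $\Xi^\ddagger$ then maps to $\pi^\ddagger$ under the canonical epimorphism $\D(\DSym_n) \to \Cl(\DSym_n)$, and $\Delta^\ddagger := \Xi^\ddagger - \id_{\DSym_n}$ maps to $\pi^\ddagger - 1_{\DSym_n}$.

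The proof then concludes in the manner of Lemma~\ref{lemma:SM}. The Type~D isometry of \cite[Theorem~3.1]{HohlwegSolomon} implies that for any $\Gamma \in \D(\DSym_n)$ mapping to $\phi \in \Cl(\DSym_n)$, the coefficient $[\id_{\DSym_n}]\Gamma$ equals $\langle \phi, 1_{\DSym_n}\rangle$. Applying this to $(\Xi^\ddagger)^t$ and $(\Delta^\ddagger)^t$, together with the obvious identifications $[\id_{\DSym_n}](\Xi^\ddagger)^t = \SD_t(n)$ and $[\id_{\DSym_n}](\Delta^\ddagger)^t = \SDp_t(n)$, yields both equalities of the theorem. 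The main obstacle is the descent analysis for $\beta'$; the essential new feature over Type~B is that $\bar{\rho}_m$ itself has determinant $-1$ and therefore lies outside $\DSym_n$, and the `cheating' factor $(-n,n)$ is precisely what produces the correct minimal-length coset representative of $\DSym_n/\DSym_{n-1}$ whose unique descent is at $\alpha_1$.
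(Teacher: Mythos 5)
Your outline for $n \ge 3$ is essentially the paper's argument: the reduction of $\MD_t(n)$, $\MDp_t(n)$ to inner products $\langle \pi^{\ddagger t},1_{\DSym_n}\rangle$, $\langle \theta^{\ddagger t},1_{\DSym_n}\rangle$ via $1_{\DSym_n}\!\ind^{\BSym_n}=\chi^{((n)\sep\varnothing)}+\chi^{(\varnothing\sep(n))}$ and Lemma~\ref{lemma:branchingB}, the identification $\Xi^\ddagger=\Xi^\ddagger_{\{\alpha_1\}}$ by a descent analysis using Lemma~\ref{lemma:interval} together with the evenness condition defining $\DSym_n$, the fact that the parabolic subgroup generated by the remaining simple roots is $\DSym_{n-1}$ so that $\Xi^\ddagger\mapsto\pi^\ddagger$ by Solomon's theorem, and the conclusion via the isometry as in Lemma~\ref{lemma:SM}. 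That all matches Lemmas~\ref{lemma:branchingD} and~\ref{lemma:XiD} and the ensuing argument.

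The gap is the case $n=2$, which the theorem explicitly includes and which your central technical step does not cover. For $n=2$ the Type D system is reducible ($D_2=A_1\times A_1$, simple roots $\alpha_1=e_1-e_2$ and $\gamma=e_1+e_2$), and \emph{every} element of the Klein four-group $\DSym_2$ is a cheating random-to-top shuffle: $\rho_1=\id$, $\rho_2=s_{\alpha_1}$, $\bar\rho_1(-2,2)=(-1,1)(-2,2)$ (the longest element, with descents at both simple roots) and $\bar\rho_2(-2,2)=s_\gamma$. Hence $\Xi^\ddagger$ is the sum of all four group elements, i.e.\ $\Xi^\ddagger_{\{\alpha_1,\gamma\}}$, not $\Xi^\ddagger_{\{\alpha_1\}}$ (whose support is only $\{\id,\rho_2\}$); likewise the parabolic subgroup generated by $\gamma$ alone has order $2$, not $|\DSym_1|=1$, so the index is $2$ rather than $2n=4$. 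So the claimed descent characterization and the coset count both fail at $n=2$, and your argument as written proves the theorem only for $n\ge 3$ (exactly the range of the paper's Lemma~\ref{lemma:XiD}). The paper closes this by a separate short argument: since $\DSym_2$ is abelian, $\Q\!\DSym_2\cong\Cl(\DSym_2)$, $\pi^\ddagger$ is the regular character (the sum of all irreducible characters), and the coefficient of $\id_{\DSym_2}$ in $\Xi^{\ddagger t}$ (resp.\ $\Delta^{\ddagger t}$) equals $\langle\pi^{\ddagger t},1_{\DSym_2}\rangle$ (resp.\ $\langle\theta^{\ddagger t},1_{\DSym_2}\rangle$), after which Lemma~\ref{lemma:branchingD} finishes the proof. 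You should either add such a direct treatment of $n=2$ or restrict your descent-algebra step to $n\ge 3$ and handle the remaining case separately.
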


We remark that since $\DSym_1$ is the trivial group we have $\SD_t(1) = 1$
whereas $\MD_t(1) = 2^t$ for all $t \in \N_0$.
Similarly  $\SDp_0(1) = 1$ and $\SDp_t(1) = 0$ for all $t \in \N$,
whereas $\MDp_t(1) = 1$ for all $t \in \N_0$.



\subsection{Proof of Theorem~\ref{thm:mainD}}
We begin with the character theoretic part
of the proof.
Let $\pi^\ddagger = \pi^\dagger \res_{\DSym_n}$
and  let $\theta^\ddagger = \pi^\ddagger - 1_{\DSym_n}$.


\begin{lemma}\label{lemma:branchingD}
Let $t \in \N_0$ and let $n \in \N$. 
Then $\MD_t(n) = \langle \pi^{\ddagger t}, 1_{\DSym_n} \rangle$
and $\MDp_t(n) = \langle  \theta^{\ddagger t}, 1_{\DSym_n} \rangle$.
\end{lemma}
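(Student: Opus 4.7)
The plan is to reduce this lemma to Lemma~\ref{lemma:branchingB} by splitting $\MD_t(n)$ according to its two allowed endpoints and then invoking Frobenius reciprocity between $\BSym_n$ and $\DSym_n$. By the definition of $\MD_t(n)$ and Lemma~\ref{lemma:branchingB},
\[
\MD_t(n) = \MB_t\bigl(((n),\varnothing),((n),\varnothing)\bigr) + \MB_t\bigl(((n),\varnothing),(\varnothing,(n))\bigr)
 = \langle \chi^{((n) \sep \varnothing)} \pi^{\dagger t},\, \chi^{((n) \sep \varnothing)} + \chi^{(\varnothing \sep (n))} \rangle_{\BSym_n},
\]
and the analogous identity with $\theta^\dagger$ in place of $\pi^\dagger$ computes $\MDp_t(n)$.

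The key step is to identify the sum $\chi^{((n) \sep \varnothing)} + \chi^{(\varnothing \sep (n))}$ with $1_{\DSym_n}\ind^{\BSym_n}$. First, unpacking the definition of $\chi^{(\lambda \sep \lambda^\star)}$ from \S\ref{sec:typeBstatement}, one sees that $\chi^{((n) \sep \varnothing)}$ is the trivial character $1_{\BSym_n}$. For $\chi^{(\varnothing \sep (n))}$, the parameter $\ell$ is $0$, so the induction is from $\BSym_n$ to itself and acts trivially; what remains is $\widetilde{\sgn}^{\times n}$ applied to the trivial inflation of $\chi^{(n)}$, giving the linear character $(u_1,\ldots,u_n;\tau) \mapsto u_1 \cdots u_n$. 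This character is $+1$ precisely on $\DSym_n$, so it is the unique non-trivial linear character of the order-$2$ quotient $\BSym_n/\DSym_n$; since index-$2$ subgroups induce the sum of the two linear characters of this quotient, we conclude that $1_{\DSym_n}\ind^{\BSym_n} = \chi^{((n) \sep \varnothing)} + \chi^{(\varnothing \sep (n))}$.

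Given this identification, multiplication by $\chi^{((n) \sep \varnothing)} = 1_{\BSym_n}$ is trivial, and Frobenius reciprocity yields
\[
\langle \pi^{\dagger t}, 1_{\DSym_n}\ind^{\BSym_n} \rangle_{\BSym_n} = \langle \pi^{\dagger t}\res_{\DSym_n}, 1_{\DSym_n} \rangle_{\DSym_n} = \langle \pi^{\ddagger t}, 1_{\DSym_n} \rangle_{\DSym_n},
\]
where the last equality uses that restriction is multiplicative, so $\pi^{\ddagger t} = (\pi^\dagger\res_{\DSym_n})^t = \pi^{\dagger t}\res_{\DSym_n}$. This gives the first claimed equality. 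The proof for $\MDp_t(n)$ is word-for-word the same once one checks that $\theta^\ddagger = \theta^\dagger\res_{\DSym_n}$, which follows immediately from $1_{\BSym_n}\res_{\DSym_n} = 1_{\DSym_n}$. The only non-bookkeeping step is the identification of $\chi^{(\varnothing \sep (n))}$ as the sign character of $\BSym_n/\DSym_n$, and this reduces to an easy unpacking of the definitions in the degenerate case $\ell = 0$.
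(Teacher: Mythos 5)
Your proof is correct and follows essentially the same route as the paper: you combine Lemma~\ref{lemma:branchingB} with Frobenius reciprocity and the decomposition $1_{\DSym_n}\!\ind^{\BSym_n} = \chi^{((n) \sep \varnothing)} + \chi^{(\varnothing \sep (n))}$, which is exactly the paper's argument run in the opposite direction. The only difference is that you supply the (easy) verification of this decomposition via the sign character of $\BSym_n/\DSym_n$, which the paper simply asserts.
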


\begin{proof}
Since $1_{\DSym_n}\!\!\ind^{\BSym_n} = \chi^{((n), \varnothing)} + \chi^{(\varnothing, (n))}$,
it follows from Frobenius reciprocity that
\[
\langle \pi^{\ddagger t}, 1_{\DSym_n} \rangle
= \langle \pi^{\dagger t}\Res_{\DSym_n}, 1_{\DSym_n} \rangle \\
= \langle \pi^{\dagger t}, \chi^{((n) \sep \varnothing)} + \chi^{(\varnothing \sep (n))} \rangle.
\]
The first part of the lemma now follows from Lemma~\ref{lemma:branchingB}. 
The second part is proved analogously.
\end{proof}

%

A root system for $\DSym_n$, constructed inside the span of the root system for $\BSym_n$ already defined,
has positive roots $\Psi^+ = \{ e_i-e_j : 1 \le j \le n \} \cup \{e_i + e_j : 1 \le j \le n \}$,
and simple roots $\alpha_1, \ldots, \alpha_{n-1}$ and
$\gamma$, where $\gamma = e_{n-1} + e_n$.
Let $\D(\DSym_n)$ be the descent subalgebra of the rational group algebra $\Q\!\DSym_n$,
as defined in \cite[Theorem~1]{Solomon}, for this choice of simple roots. Let $\Xi^\ddagger_I$
for $I \subseteq \{ \alpha_1, \ldots, \alpha_{n-1}, \gamma \}$ be the canonical basis elements,
defined in the same way as for Type B.

Let $\Xi^\ddagger = \sum_{m=1}^n (\rho_m^{-1} + (-n,n)\bar{\rho}_m^{-1})$ and let
$\Delta^\ddagger = \Xi^\ddagger - \mathrm{id}_{\DSym_n}$.
The following lemma is the analogue of Lemma~\ref{lemma:XiB} for Type D. 

\begin{lemma}\label{lemma:XiD}
Let $n \in \N$ be such that $n \ge 3$. Then $\Xi^\ddagger$, $\Delta^\ddagger \in \D(\DSym_n)$ and under the canonical algebra epimorphism
$\D(\DSym_n) \rightarrow \Cl(\DSym_n)$ we have $\Xi^\ddagger \mapsto \pi^\ddagger$ and
$\Delta^\ddagger \mapsto \pi^\ddagger - 1_{\Sym_n}$.
\end{lemma}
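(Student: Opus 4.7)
The plan is to adapt the proof of Lemma~\ref{lemma:XiB} to Type~D, the new ingredient being an analysis of the action of $\sigma \in \DSym_n$ on the extra simple root $\gamma = e_{n-1}+e_n$, together with a check that the $\DSym_n$-parity condition matches the presence of the cheating flip $(-n,n)$ in half of the summands of $\Xi^\ddagger$. First I would identify the parabolic subgroup $W_J$ of $\DSym_n$ corresponding to $J = \{\alpha_2, \ldots, \alpha_{n-1}, \gamma\}$: the reflections $s_{\alpha_2}, \ldots, s_{\alpha_{n-1}}$ generate the copy of $\Sym_{n-1}$ acting on $\{\pm 2, \ldots, \pm n\}$, and adjoining $s_\gamma = (n-1,-n)(n,-(n-1))$ gives a Coxeter group of type $D_{n-1}$ on these letters, namely the stabilizer of $1$ in $\DSym_n$. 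A brief Mackey calculation based on $\BSym_{n-1}\DSym_n = \BSym_n$ and $\BSym_{n-1} \cap \DSym_n = \DSym_{n-1}$ shows that $\pi^\ddagger = \pi^\dagger \res_{\DSym_n}$ coincides with $1_{W_J}\ind^{\DSym_n}$, so that by \cite[Theorem~1]{Solomon} the canonical basis element $\Xi^\ddagger_{\{\alpha_1\}}$ of $\D(\DSym_n)$ maps to $\pi^\ddagger$ under the canonical epimorphism.

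The main combinatorial step is to prove $\Xi^\ddagger = \Xi^\ddagger_{\{\alpha_1\}}$, i.e.\ that a permutation $\sigma \in \DSym_n$ is a summand of $\Xi^\ddagger_{\{\alpha_1\}}$ if and only if $\sigma^{-1}$ is a cheating random-to-top shuffle. Applying Lemma~\ref{lemma:interval} to the indices $2, \ldots, n$ yields two cases for the conditions $\alpha_i\sigma \in \Psi^+$ for $i = 2, \ldots, n-1$. A direct computation of when $\gamma\sigma \in \Psi^+$, handled by splitting on the signs of $(n-1)\sigma$ and $n\sigma$, rules out the all-negative variant of Lemma~\ref{lemma:interval}'s first conclusion and every sub-case of its second conclusion with switch point $q < n-1$; the residual case $q = n-1$ is then constrained by $(n-1)\sigma < |n\sigma|$. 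Imposing the Type~D parity condition next forces $1\sigma < 0$ in this residual case. One then checks that these residual permutations are precisely the inverses of $\bar\rho_m(-n,n)$ for $1 \le m \le n$, while the all-positive variant of Lemma~\ref{lemma:interval}'s first case produces exactly the inverses of $\rho_m$; a count of $2n = [\DSym_n : W_J]$ confirms that the identification is exhaustive.

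Finally, since $\id_{\DSym_n} \in \D(\DSym_n)$ maps to $1_{\DSym_n}$, both $\Xi^\ddagger$ and $\Delta^\ddagger = \Xi^\ddagger - \id_{\DSym_n}$ lie in $\D(\DSym_n)$ and map respectively to $\pi^\ddagger$ and $\pi^\ddagger - 1_{\DSym_n}$. The hypothesis $n \ge 3$ ensures that $\gamma$ is distinct from $\alpha_1$ and that the interval $\{2, \ldots, n\}$ has at least two elements, so that Lemma~\ref{lemma:interval} has genuine content; I expect the main obstacle to be the combinatorial case analysis, and in particular the subtle interplay between the sign of $\gamma\sigma$ and the $\DSym_n$-parity condition that selects the $(-n,n)$-twisted summands on the nose.
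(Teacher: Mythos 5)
Your proposal is correct and follows essentially the same route as the paper: the same case analysis via Lemma~\ref{lemma:interval} on $\alpha_2,\ldots,\alpha_{n-1}$ together with the condition $\gamma\sigma\in\Psi^+$ and the Type~D parity constraint, identifying the support of $\Xi^\ddagger_{\{\alpha_1\}}$ with the inverses of the cheating random-to-top shuffles, followed by the identification of the parabolic subgroup generated by $s_{\alpha_2},\ldots,s_{\alpha_{n-1}},s_\gamma$ as the stabiliser of $1$ and an appeal to Solomon's theorem. Your Mackey argument for $\pi^\ddagger=1_{\DSym_{n-1}}\!\ind^{\DSym_n}$ and the count $2n=[\DSym_n:W_J]$ are just explicit justifications of steps the paper treats as immediate.
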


\begin{proof}
Let $\sigma \in \DSym_n$.
Suppose that $\alpha_1$ is the unique simple root~$\delta$ such
that $\delta\sigma \not\in \Psi^+$. Observe that if $n \sigma \in \{-1,\ldots, -n\}$
then, since $\gamma \sigma \in \Psi^+$, we have $(n-1)\sigma \in \{1,\ldots, n\}$
and $(n-1)\sigma < |n\sigma|$.
It therefore follows from Lemma~\ref{lemma:interval} that
$\{2,\ldots, n-1\}\sigma \subseteq \{1,\ldots, n\}$.
Since $\bigl| \{1,\ldots,n\}
\sigma \cap \{-1,\ldots,-n\} \bigr|$ is even, 
either $\{1,n\}\sigma \subseteq \{1,\ldots, n\}$ or $\{1,n\}\sigma \subseteq \{-1,\ldots, n\}$.
Hence \emph{either}
\begin{itemize}
\item[(1)] $1 \sigma, 2 \sigma
\in \{1,\ldots, n\}$ and $1\sigma > 2 \sigma < \ldots < n \sigma$, \emph{or}
\item[(2)] $1 \sigma, n \sigma \in \{-1,\ldots, -n\}$ and
\hbox{$2 \sigma < \ldots < (n-1)\sigma < |n \sigma|$}.
\end{itemize}
The permutations $\sigma$ in Case (1) are
$\{ \rho_m^{-1} : 1 \le m \le n \}$.
In Case (2) the chain of inequalities implies that $|n \sigma| \ge n-1$ and $(n-1)\sigma \ge n-2$.
If $n\sigma = -(n-1)$
then $(n-1)\sigma = n-2$ and the unique permutation is
\[ (-n, n) \bar{\rho}_n^{-1} = (n, -(n-1), \ldots -1)(-n, (n-1), \ldots, 1). \]
The permutations such that $n \sigma = -n$ are $(-n,n) \bar{\rho}_m^{-1}$
for $1 \le m \le n-1$. Hence $\Xi^\ddagger = \Xi_{\{\alpha_1\}} \in \DSym_n$.

It is easily seen that $\pi^\ddagger = 1_{\DSym_{n-1}}\hskip-2pt\ind^{\DSym_n}$, and
that $\DSym_{n-1}$ is generated by the reflections in the simple roots $\alpha_i$ for
$2 \le i \le n-1$
and~$\gamma$. Therefore, by \cite[Theorem~1]{Solomon}, under the
canonical algebra epimorphism $\D(\DSym_n) \rightarrow \Cl(\DSym_n)$
we have $\Xi^\ddagger \mapsto \pi^\ddagger$. This completes the proof.
\end{proof}


We are now ready to prove Theorem~\ref{thm:mainD}.
When $n \ge 3$ the theorem follows from Lemma~\ref{lemma:branchingD} and
Lemma~\ref{lemma:XiD} by the argument used to prove Lemma~\ref{lemma:daggerSM}.
In the remaining case $\DSym_2
= \langle (-1,-2)(1,2), (-1,1)(-2,2) \rangle$ is the Klein $4$-group
and every permutation in $\DSym_2$ is a cheating
top-to-random shuffle. Moreover,~$\pi$ is the sum of all irreducible characters
of $\DSym_2$ and $\theta$ is the sum of all non-trivial irreducible characters of $\DSym_2$.
Since $\DSym_2$ is abelian, the group algebra $\Q\!\DSym_2$ is isomorphic to the
algebra of class functions $\Cl(\DSym_2)$. Hence the coefficient of $\id_{\DSym_n}$ in
$\Xi^{\ddagger t}$ is  $\langle \pi^{\ddagger t}, 1_{\DSym_n} \rangle$,
and  the coefficient
of $\id_{\DSym_n}$ in $\Delta^{\ddagger t}$ is $\theta^{\ddagger t}$. The theorem
now follows from Lemma~\ref{lemma:branchingD}.

\subsection{Eigenvalues of the  cheating oriented $k$-shuffle}

For $k < n$ let $\Xi^\ddagger_k = \Xi^\ddagger_{\{\alpha_k\}}$
and let $\Xi^\ddagger_n = \Xi^\ddagger_{\{\gamma\}}$.
The following lemma is the analogue of Lemma~\ref{lemma:XiBgen}
and generalizes Lemma~\ref{lemma:XiD}.
Let $\mathcal{S}(\Gamma)$ denote the support of $\Gamma \in \Q\!\BSym_n$.

\begin{lemma}\label{lemma:XiDgen}
Let $k \le n$. Let $\sigma \in \DSym_n$. Then $\sigma$ is in the support of $\Xi^\ddagger_k$ if and only
if $\sigma^{-1}$ is
a cheating $k$-shuffle.
\end{lemma}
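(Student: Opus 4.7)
The plan is to extend the argument used to prove Lemma~\ref{lemma:XiBgen} by replacing the role of the simple root $\beta$ of Type B with the simple root $\gamma$ of Type D, exploiting the extra flexibility that this brings. Suppose first that $k < n$ and let $\sigma$ lie in the support of $\Xi^\ddagger_k = \Xi^\ddagger_{\{\alpha_k\}}$, so that $\alpha_i\sigma \in \Psi^+$ for every $i \in \{1,\ldots,n-1\}$ with $i \neq k$, and $\gamma\sigma \in \Psi^+$. Since each $\alpha_i\sigma$ has the form $\pm e_p \pm e_q$ with $p \neq q$, the conditions $\alpha_i\sigma \in \Psi^+$ and $\alpha_i\sigma \in \Phi^+$ coincide, so Lemma~\ref{lemma:interval} applies unchanged to the runs $\alpha_1,\ldots,\alpha_{k-1}$ and $\alpha_{k+1},\ldots,\alpha_{n-1}$. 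This yields on positions $1,\ldots,k$ the same sign-change pattern found in the proof of Lemma~\ref{lemma:XiBgen}, and an analogous tentative pattern on positions $k+1,\ldots,n$.

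Next I would use the condition $\gamma\sigma \in \Psi^+$ to restrict the signs of $(n-1)\sigma$ and $n\sigma$. A direct case analysis on the four sign combinations rules out the all-negative and late-sign-change sub-cases on positions $k+1,\ldots,n$, leaving exactly two possibilities: (A) the values $(k+1)\sigma,\ldots,n\sigma$ are positive and strictly increasing (the Type B pattern), or (B) the values $(k+1)\sigma,\ldots,(n-1)\sigma$ are positive and strictly increasing while $n\sigma < 0$ and $(n-1)\sigma < |n\sigma|$. This is the step at which $\gamma$ takes over from $\beta$, and I expect the main technical obstacle to be the edge case $k = n-1$, where the chain $\alpha_{k+1},\ldots,\alpha_{n-1}$ is empty and $\gamma$ alone controls positions $n-1$ and $n$.

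In case~(A), $\sigma$ satisfies all the descent conditions for $\mathcal{S}(\Xi^\dagger_k)$, so Lemma~\ref{lemma:XiBgen} identifies $\sigma^{-1}$ as an oriented $k$-shuffle; since $\sigma \in \DSym_n$, it is a cheating $k$-shuffle. In case~(B), setting $\tau = (-n,n)\sigma$ negates $n\sigma$ while leaving the earlier values unchanged, converting pattern~(B) into pattern~(A) for $\tau$; hence $\tau \in \mathcal{S}(\Xi^\dagger_k)$, and $\sigma^{-1}(-n,n) = \tau^{-1}$ is an oriented $k$-shuffle, so $\sigma^{-1}$ is a cheating $k$-shuffle via the second clause of its definition. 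The converse direction is symmetric: starting from $\sigma \in \DSym_n$ with either $\sigma \in \mathcal{S}(\Xi^\dagger_k)$ or $(-n,n)\sigma \in \mathcal{S}(\Xi^\dagger_k)$, one verifies $\gamma\sigma \in \Psi^+$ (and the Type D $\alpha_i$-ascents, which coincide with their Type B counterparts) by running the case analysis in reverse. Finally, for $k = n$ the same framework applies with $\gamma$ playing the role of the `free' descent: Lemma~\ref{lemma:interval} applied to the full chain $\alpha_1,\ldots,\alpha_{n-1}$, together with the parity constraint $\sigma \in \DSym_n$, pins down the support of $\Xi^\ddagger_n = \Xi^\ddagger_{\{\gamma\}}$, and the identification with cheating $n$-shuffles follows by the same two-case split according to whether the underlying oriented $n$-shuffle already lies in $\DSym_n$ or requires post-composition with $(-n,n)$.
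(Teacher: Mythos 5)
Your strategy is the one the paper uses: for $k<n$ split on the sign of $n\sigma$, combine Lemma~\ref{lemma:interval} with the condition $\gamma\sigma\in\Psi^+$, and reduce to Lemma~\ref{lemma:XiBgen}, passing to $(-n,n)\sigma$ when $n\sigma$ is negative. For $k<n-1$ your cases (A) and (B) reproduce that argument, and there your ``symmetric'' converse is indeed sound, because $\alpha_{n-1}\notin I$: the conditions $\alpha_{n-1}\sigma\in\Phi^+$ and $e_n\sigma\in\Phi^+$ force $(n-1)\sigma$ and $n\sigma$ to be positive with $(n-1)\sigma<n\sigma$, so $\gamma\sigma=e_{(n-1)\sigma}+e_{n\sigma}\in\Psi^+$ comes for free.

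The genuine gap is the boundary $k\in\{n-1,n\}$, which you flag as ``the main technical obstacle'' and then never resolve; in fact the converse cannot simply be ``run in reverse'' there. When $k=n-1$ the root $\alpha_{n-1}$ is allowed to be a descent, and membership in $\mathcal{S}(\Xi^\dagger_{n-1})\cap\DSym_n$ does not give $\gamma\sigma\in\Psi^+$. Concretely, take $n=4$ and let $\sigma$ send $1\mapsto 1$, $2\mapsto -3$, $3\mapsto -2$, $4\mapsto 4$: this $\sigma$ is an involution, it is itself an oriented $3$-shuffle lying in $\DSym_4$ (remove the cards in positions $1,2,3$ and flip those from positions $2$ and $3$), hence a cheating $3$-shuffle, and its only Type B descent is $\alpha_3$, so $\sigma\in\mathcal{S}(\Xi^\dagger_3)\cap\DSym_4$; yet $\gamma\sigma=e_4-e_2\notin\Psi^+$, so $\sigma\notin\mathcal{S}(\Xi^\ddagger_3)$. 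So the step you defer is exactly where the argument breaks, and these values of $k$ need separate treatment rather than symmetry (the paper's own proof is terse at this point, but its remark immediately after the lemma records that for $k=n-1$ and $k=n$ one has $\mathcal{S}(\Xi^\ddagger_k)=\mathcal{S}(\Xi^\dagger_k)\cap\DSym_n$, so no cheating flip occurs in the support). Relatedly, your description of $k=n$ is not what happens: since the conditions on $\alpha_1,\ldots,\alpha_{n-1}$ are identical in Types B and D, $\mathcal{S}(\Xi^\ddagger_n)=\mathcal{S}(\Xi^\dagger_n)\cap\DSym_n$ outright, and there is no two-case split involving post-composition with $(-n,n)$; the paper argues that case by this equality and accordingly identifies the support only with cheating $n$-shuffles that do not use the cheating flip. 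In summary: your treatment agrees with the paper for $k<n-1$, but for $k=n-1$ and $k=n$ the converse as you sketch it would fail and a genuinely different (more careful) argument is required.
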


\begin{proof}
 Let $\sigma \in \BSym_n$. Suppose first of all that $k < n$.
It is clear that $\sigma \in \mathcal{S}(\Xi^\ddagger_k)$ and $n\sigma \in \{1,\ldots, n\}$
if and only if $\sigma \in \mathcal{S}(\Xi^\dagger_k) \cap \DSym_n$. By Lemma~\ref{lemma:XiBgen},
this is the case if and only if $\sigma^{-1}$ is a cheating $k$-shuffle
that leaves the bottom card unflipped.
If $n\sigma \in \{-1,\ldots,-n\}$ and $\gamma \sigma \in \Psi^+$, then, as seen
in the proof of Lemma~\ref{lemma:XiD}, we have
$(n-1)\sigma \in \{1,\ldots, n\}$ and $(n-1)\sigma < |n \sigma|$.
Hence if $\tau = (-n,n)\sigma \in \BSym_n$ then $n\tau \in \{1,\ldots,n\}$ and
$(n-1)\tau < n\tau$. It follows that $\sigma \in \mathcal{S}(\Xi^\ddagger_k)$ and $n\sigma \in \{-1,
\ldots,-n\}$ if and only if $(-n,n)\sigma \in \mathcal{S}(\Xi^\dagger_k) \cap \DSym_n$.
By Lemma~\ref{lemma:XiBgen}, this is the case if and only if $\sigma^{-1}$ is a cheating
$k$-shuffle that flips the bottom card.

Since $\sigma \in \mathcal{S}(\Xi^\ddagger_n)$ if and only if $\alpha_j \sigma \in \Psi^+$
for $1\le j \le n-1$, which is the case if and only if $\alpha_j \sigma \in \Phi^+$ for
$1 \le j \le n-1$, we have $\mathcal{S}(\Xi^\ddagger_n) = \mathcal{S}(\Xi^\dagger_n) \cap \DSym_n$.
It now follows from Lemma~\ref{lemma:XiBgen}
that $\sigma \in \mathcal{S}(\Xi^\ddagger_n)$ if and only if $\sigma^{-1}$
is a cheating $k$-shuffle that leaves the bottom card unflipped.
\end{proof}

It is worth noting that a small extension of this argument shows
that $\mathcal{S}(\Xi^\ddagger_k) = \mathcal{S}(\Xi^\dagger_k) \cap \DSym_n$
(and so no permutations in $\mathcal{S}(\Xi^\ddagger_k)$ involve the cheating flip of the bottom
card) if and only
if $k = n-1$ or $k=n$.


Under the canonical epimorphism $\D(\DSym_n) \rightarrow \Cl(\DSym_n)$, the image of
$\Xi_k^\ddagger$ is the permutation character $\pi_k^\ddagger$ of $\DSym_n$ acting
on the cosets of the parabolic subgroup $\bigl(\Sym_k \times (C_2 \wr \Sym_{n-k})\bigr) \cap \DSym_n$.
Thus $\pi_k^\ddagger = \pi_k^\dagger\res_{\DSym_n}$.

Let $P^\ddagger(k)$ be the transition matrix of the Markov chain on
$\DSym_n$ in which each step is given by choosing one of the
$2^k\binom{n}{k}$ cheating $k$-shuffles uniformly at random. Let $P^{\ddagger'}(k)$ be the analogous chain
where only non-identity shuffles are chosen.
The same argument used in \S\ref{sec:Phatarfod} now proves the following proposition.

\begin{proposition}\label{prop:eigenvaluesD} 
The eigenvalues of $P^\ddagger(k)$
are $\pi^\ddagger_k(\tau)/ 2^k\binom{n}{k}$
for $\tau \in \DSym_n$ and the eigenvalues of
$P^{\ddagger'}(k)$ are $(\pi^\ddagger_k(\tau)-1) / ( 2^k\binom{n}{k} - 1)$.\hfill$\qed$
\end{proposition}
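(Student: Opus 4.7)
The plan is to replay the short argument from \S\ref{sec:Phatarfod} in the Type D setting, with Lemma~\ref{lemma:XiDgen} playing the role that $\Xi^{(k,n-k)}$ played in Type A. First I would note that $(\Tr P^\ddagger(k)^t)/|\DSym_n|$ equals the probability that $t$ independent, uniformly random cheating $k$-shuffles multiply to $\id_{\DSym_n}$, and hence equals $(2^k\binom{n}{k})^{-t}[\id_{\DSym_n}]\,\Sigma^t$, where $\Sigma$ is the sum of the inverses of all cheating $k$-shuffles.

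Next, by Lemma~\ref{lemma:XiDgen} we have $\Sigma = \Xi^\ddagger_k \in \D(\DSym_n)$, and the remark immediately following that lemma identifies its image under the canonical epimorphism $\D(\DSym_n) \to \Cl(\DSym_n)$ as $\pi^\ddagger_k$. The Type D version of the isometry statement \cite[Theorem~3.1]{HohlwegSolomon} (which holds uniformly for every finite Coxeter group, and was already invoked in Type B in the proof of Lemma~\ref{lemma:daggerSM}) then gives, exactly as in \eqref{eq:iso}, the identity
\[ [\id_{\DSym_n}] (\Xi^\ddagger_k)^t \;=\; \langle (\pi^\ddagger_k)^t, 1_{\DSym_n} \rangle. \]

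Combining these observations with the standard character identity $\langle \phi^t, 1_G\rangle = |G|^{-1}\sum_{\tau \in G}\phi(\tau)^t$, I would conclude that
\[ \frac{\Tr P^\ddagger(k)^t}{|\DSym_n|} \;=\; \frac{1}{\bigl(2^k\binom{n}{k}\bigr)^t}\cdot\frac{1}{|\DSym_n|}\sum_{\tau \in \DSym_n}\pi^\ddagger_k(\tau)^t \]
for every $t \in \N_0$. Since the left side is the $t$th power sum of the eigenvalues of $P^\ddagger(k)$, equality of all power sums (together with Newton's identities) forces the multiset of eigenvalues of $P^\ddagger(k)$ to coincide with $\{\pi^\ddagger_k(\tau)/2^k\binom{n}{k} : \tau \in \DSym_n\}$, which is the first claim.

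Finally, the statement for $P^{\ddagger'}(k)$ follows formally, just as in the proof of Proposition~\ref{prop:eigenvaluesA}, from the matrix identity
\[ \Bigl(2^k\binom{n}{k}-1\Bigr) P^{\ddagger'}(k) \;=\; 2^k\binom{n}{k}\, P^\ddagger(k) \;-\; I. \]
The only non-routine ingredient is the appeal to \cite[Theorem~3.1]{HohlwegSolomon} to ensure that the descent-algebra-to-class-function map is an isometry in Type D; this is the point on which one must be careful, but it is part of Hohlweg and Solomon's general framework and requires no new work.
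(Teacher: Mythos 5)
Your proposal is correct and is essentially the paper's own argument: the paper proves the proposition by remarking that the argument of \S\ref{sec:Phatarfod} applies verbatim, using Lemma~\ref{lemma:XiDgen} to identify the sum of inverse cheating $k$-shuffles with $\Xi^\ddagger_k$, its image under the canonical epimorphism with $\pi^\ddagger_k$, the isometry of the descent-algebra epimorphism, and the power-sum/affine-shift arguments exactly as you describe. Your explicit note that the Hohlweg isometry holds for Type D as well (not just Type B) is the only point the paper leaves implicit, and you handle it correctly.
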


In particular, we observe
that the eigenvalues of the Type D shuffle are exactly the eigenvalues of the Type B shuffle
coming from elements of $\DSym_n \le \BSym_n$.

\section{Generating functions and asymptotics}

For $t \in \N_0$ and $n \in \N$,
the Stirling number of the second kind $\stir{t}{n}$ satisfies $\stir{t}{n} = B_t(n)-
B_{t}(n-1)$. By analogy we define $\stir{t}{n}' = B'_t(n) - B'_{t}(n-1)$,
$\stir{t}{n}^\dagger = B^\dagger_t(n) - B^\dagger_t(n-1)$ and
$\stir{t}{n}^{\dagger\prime} = B^{\dagger\prime}_t(n) - B^{\dagger\prime}_t(n-1)$.
When $n=0$ each generalized Stirling number is defined to be $1$ if $t=0$ and otherwise $0$.

Theorem~\ref{thm:main} and Theorem~\ref{thm:mainB} give several
combinatorial interpretations of these numbers. In particular, we note
that $\stir{t}{n}$ is the number of sequences of $t$ random-to-top shuffles
that leave a deck of $n$ cards invariant while lifting every card at least once, and
$\stir{t}{n}^\dagger$ is the number of such sequences of oriented random-to-top shuffles.
Moreover $\stir{t}{n}'$ and $\stir{t}{n}^{\dagger\prime}$ have similar interpretations,
considering only sequences of non-identity shuffles.

In this section we give generating functions and asymptotic results on the generalized Bell and Stirling
Numbers. Along the way we shall see a number of relationships between these numbers.
Some of these results are obtained using basic arguments from residue calculus: we refer
the reader to \cite[Section 5.2]{Wilf} for an account of this method.


\subsection{Relating $B_t(n)$ to $B'_t(n)$ and the asymptotics of $B'_t(t)$}

The numbers $B'_t(t)$ are considered by Bernhart \cite{Bernhart}, who describes the associated set partitions
as \emph{cyclically spaced}. The following lemma generalizes a result
in \S 3.5 of \cite{Bernhart}. The bijective proof given therein also generalizes,
but we give instead a short algebraic proof as an application of Theorem~\ref{thm:main} and
Lemma~\ref{lemma:branching}.

\begin{lemma}\label{lemma:Bernhart}
If $t$, $n \in \N$ then $B'_t(n) + B'_{t-1}(n) = B_{t-1}(n-1)$.
\end{lemma}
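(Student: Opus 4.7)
The plan is to prove this algebraically, exploiting the identifications in Lemma~\ref{lemma:branching} and Theorem~\ref{thm:main} rather than attempting a bijective argument (which would presumably generalize Bernhart's proof, but be longer). Specialising Lemma~\ref{lemma:branching} to $\lambda = \mu = (n)$ and using $\chi^{(n)} = 1_{\Sym_n}$ gives
\[
B_t(n) = \langle \pi^t, 1_{\Sym_n} \rangle, \qquad B'_t(n) = \langle \theta^t, 1_{\Sym_n} \rangle,
\]
where $\pi$ and $\theta = \pi - 1_{\Sym_n}$ are the characters of $\Sym_n$ defined before Lemma~\ref{lemma:branching}.

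The key algebraic trick is then the factorisation
\[
\theta^t + \theta^{t-1} \,=\, \theta^{t-1}(\theta + 1_{\Sym_n}) \,=\, \theta^{t-1}\pi,
\]
so that $B'_t(n) + B'_{t-1}(n) = \langle \theta^{t-1}\pi, 1_{\Sym_n}\rangle$. Since $\pi = 1_{\Sym_{n-1}}\!\ind^{\Sym_n}$, Frobenius reciprocity (in its multiplicative form $\langle \chi \cdot (\phi\!\ind^{\Sym_n}), 1_{\Sym_n}\rangle = \langle \chi\!\res_{\Sym_{n-1}}, \phi\rangle$) converts this to
\[
B'_t(n) + B'_{t-1}(n) \,=\, \langle \theta^{t-1}\!\res_{\Sym_{n-1}},\, 1_{\Sym_{n-1}}\rangle.
\]

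The final step identifies the restricted character. Writing $\pi'$ for the natural permutation character of $\Sym_{n-1}$ acting on $\{1,\ldots,n-1\}$, direct inspection of fixed points gives $\pi\!\res_{\Sym_{n-1}} = 1_{\Sym_{n-1}} + \pi'$, and hence $\theta\!\res_{\Sym_{n-1}} = \pi'$. Since restriction is multiplicative, $\theta^{t-1}\!\res_{\Sym_{n-1}} = (\pi')^{t-1}$, and applying the same representation-theoretic formula for $B_{t-1}(n-1)$ gives $\langle (\pi')^{t-1}, 1_{\Sym_{n-1}}\rangle = B_{t-1}(n-1)$, completing the proof.

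There is no real obstacle here; the only thing one needs to notice is the telescoping factorisation $\theta^{t-1}(\theta + 1) = \theta^{t-1}\pi$, which is precisely what makes the induction-from-$\Sym_{n-1}$ in $\pi$ cancel into a restriction. The small cases $t=1$ or $n=1$ hold trivially ($B'_0(n) = 1$, $B'_1(n) = 0$, $B_0(n-1) = 1$), so no separate verification is required.
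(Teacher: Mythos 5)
Your proof is correct and is essentially the paper's own argument: both reduce the identity via Theorem~\ref{thm:main} and Lemma~\ref{lemma:branching} to inner products of powers of $\theta$ with $1_{\Sym_n}$, exploit $\pi = \theta + 1_{\Sym_n}$ together with Frobenius reciprocity, and use that $\theta\res_{\Sym_{n-1}}$ is the natural permutation character of $\Sym_{n-1}$ (with restriction commuting with products). The paper simply runs the same chain of equalities in the opposite direction, starting from $M_{t-1}(n-1)$ instead of from $B'_t(n)+B'_{t-1}(n)$.
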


\begin{proof}
By Theorem~\ref{thm:main} it is equivalent to prove that
$M'_t(n) + M'_{t-1}(n) = M_{t-1}(n-1)$. Let $\pi$ be the natural permutation
character of $\Sym_n$ and let
$\theta = \chi^{(n-1,1)}$, as in Lemma~\ref{lemma:branching}.
Note that $\theta\!\res_{\Sym_{n-1}}$ is the natural permutation character of $\Sym_{n-1}$.
Since restriction commutes with taking products of characters, it
follows from Lemma~\ref{lemma:branching}  that
$M_{t-1}(n-1)  = \langle \theta^{t-1}\hspace*{-3pt}\res_{\hspace*{1pt}\Sym_{n-1}}, 1_{\Sym_{n-1}} \rangle$. By Frobenius reciprocity, $ \langle \theta^{t-1}\hspace*{-3pt}\res_{\hspace*{1pt}\Sym_{n-1}}, 1_{\Sym_{n-1}} \rangle
= \langle \theta^{t-1}, \pi \rangle$. Now
\[ \langle \theta^{t-1}, \pi \rangle = \langle \theta^{t-1}, \theta \rangle +
\langle \theta^{t-1}, 1_{\Sym_n} \rangle = \langle \theta^t, 1_{\Sym_n} \rangle
+ \langle \theta^{t-1}, 1_{\Sym_n} \rangle \]
which is equal to $M'_t(n) + M'_{t-1}(n)$, again by Lemma~\ref{lemma:branching}.
%
\end{proof}

We note that the quantity $B_t'(n) + B_{t-1}'(n)$  appearing in Lemma~\ref{lemma:Bernhart}  has a natural interpretation: it counts set partitions which are \emph{spaced}, but not necessarily \emph{cyclically spaced}. To prove this we use the associated Stirling numbers:
for $t \in \N$ and $n \in \N_0$ define $\stir{t}{n}^{\star} = \stir{t}{n}' + \stir{t-1}{n}'$.

\begin{proposition}\label{prop:star}
For $t \in \N$ and $n\in \N_0$, $\stir{t}{n}^\star$ is equal to the number of set partitions of $\{1,\ldots, t\}$ into $n$ parts, such that $i$ and $i+1$ are not in the same part for any $i$.
\end{proposition}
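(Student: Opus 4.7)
The plan is to identify the sum $\stir{t}{n}^\star$ with a difference of counts of spaced (but not necessarily cyclically spaced) set partitions. To this end, I would introduce the auxiliary quantity $\widetilde{B}_t(n)$, defined as the number of set partitions of $\{1,\ldots,t\}$ into at most $n$ parts in which no part contains both $i$ and $i+1$ for $i \in \{1,\ldots,t-1\}$ (dropping the cyclic condition on $\{1,t\}$). The crucial identity to establish bijectively is
\[ \widetilde{B}_t(n) = B'_t(n) + B'_{t-1}(n). \]

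To prove this, I would split the partitions counted by $\widetilde{B}_t(n)$ according to whether $1$ and $t$ lie in different parts or the same part. The first class is by definition exactly the cyclically spaced partitions, contributing $B'_t(n)$. For the second class, I would construct a bijection with cyclically spaced partitions of $\{1,\ldots,t-1\}$ into at most $n$ parts. Given a spaced partition $\mathcal{P}$ of $\{1,\ldots,t\}$ in which $1$ and $t$ lie in a common block $B$, the spaced condition forces $B$ to omit both $2$ and $t-1$; so deleting $t$ from $B$ produces a partition $\mathcal{P}'$ of $\{1,\ldots,t-1\}$ with the same number of parts, still spaced, and with the additional property that the block containing $1$ avoids $t-1$, which is precisely the cyclically spaced condition. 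Conversely, given a cyclically spaced partition of $\{1,\ldots,t-1\}$, inserting $t$ into the block containing $1$ produces a spaced partition of $\{1,\ldots,t\}$ in the second class. This gives the required bijection and establishes the identity.

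To finish, I would observe that the number of spaced partitions of $\{1,\ldots,t\}$ into \emph{exactly} $n$ parts is $\widetilde{B}_t(n) - \widetilde{B}_t(n-1)$. Applying the identity above and rearranging,
\[ \widetilde{B}_t(n) - \widetilde{B}_t(n-1) = \bigl(B'_t(n) - B'_t(n-1)\bigr) + \bigl(B'_{t-1}(n) - B'_{t-1}(n-1)\bigr) = \stir{t}{n}' + \stir{t-1}{n}' = \stir{t}{n}^\star, \]
which is precisely the claimed interpretation. The argument bypasses Lemma~\ref{lemma:Bernhart}; in fact, summing the auxiliary identity over $n$ recovers that lemma and gives it a purely bijective proof. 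The main subtlety is just checking the edge cases (small $t$, or $n=0$), all of which are immediate from the definitions, so I expect no real obstacle: the only content of the argument is the bijection described above, which is essentially forced by the spaced condition.
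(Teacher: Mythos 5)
This is correct and essentially the paper's own argument: the paper splits the spaced partitions of $\{1,\ldots,t\}$ into exactly $n$ parts according to whether $1$ and $t$ lie in a common part, and uses precisely your delete-$t$ bijection (spacedness forces $t-1$ out of that part, so the image is cyclically spaced). Phrasing it at the level of the at-most-$n$ counts $\widetilde{B}_t(n)$ and then differencing, together with your remark that summing over $n$ recovers Lemma~\ref{lemma:Bernhart} bijectively, is only a light repackaging of the same idea (and, as in the paper's proof, the deletion/insertion step tacitly requires $t \ge 2$).
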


\begin{proof}
The set partitions of $\{1,\ldots, t\}$ into exactly $n$ sets
counted by $\stir{t}{n}^\star$ but not by $\stir{t}{n}'$ are those with $1$ and $t$ in the same part. But in this case $t-1$ cannot also be in the same part as $1$, and so deleting $t$ gives a bijection between these extra set partitions and the set partitions counted by $\stir{t-1}{n}'$.
\end{proof}
%
%

We now use Lemma~\ref{lemma:Bernhart}
to get the exponential generating function for the~$B'_t(t)$.

\begin{proposition}\label{prop:Bernhart}
\[ \sum_{t=0}^\infty \frac{B'_t(t)}{t!} x^t = \exp\bigl( \exp(x) - 1 - x \bigr). \]
\end{proposition}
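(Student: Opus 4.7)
The plan is to reduce the problem to an ordinary differential equation via the recurrence of Lemma~\ref{lemma:Bernhart}, and then solve it using the well-known exponential generating function
\[ g(x) = \sum_{t=0}^\infty \frac{B_t}{t!} x^t = \exp\bigl(\exp(x) - 1\bigr) \]
for the usual Bell numbers.

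First I would specialize Lemma~\ref{lemma:Bernhart} to $n = t$ to obtain $B'_t(t) + B'_{t-1}(t) = B_{t-1}(t-1)$ for $t \ge 1$. Since a set partition of $\{1,\ldots,t-1\}$ has at most $t-1 \le t$ parts, the restriction ``at most $t$ parts'' is redundant, so $B'_{t-1}(t) = B'_{t-1}(t-1)$. Setting $a_t = B'_t(t)$ and using that $B_{t-1}(t-1) = B_{t-1}$, this gives the clean recurrence
\[ a_t + a_{t-1} = B_{t-1} \qquad (t \ge 1), \]
with initial value $a_0 = B'_0(0) = 1$.

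Next I would translate into generating functions. Let $f(x) = \sum_{t=0}^\infty a_t x^t/t!$. Multiplying the recurrence by $x^{t-1}/(t-1)!$ and summing over $t \ge 1$ yields
\[ f'(x) + f(x) = g(x). \]
This is a first-order linear ODE. Multiplying by the integrating factor $e^x$ and using the identity $g'(x) = e^x g(x)$ (immediate from $g(x) = \exp(\exp(x)-1)$), we obtain
\[ \bigl(e^x f(x)\bigr)' = e^x g(x) = g'(x). \]
Integrating and using $f(0) = g(0) = 1$ to evaluate the constant of integration gives $e^x f(x) = g(x)$, hence $f(x) = \exp(\exp(x) - 1 - x)$ as required.

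This argument is essentially mechanical once the recurrence is in hand; the only place to be careful is the reduction $B'_{t-1}(t) = B'_{t-1}(t-1)$, which is what allows the application of Lemma~\ref{lemma:Bernhart} at $n=t$ to produce a pure recurrence in the diagonal sequence $B'_t(t)$.
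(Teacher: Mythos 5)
Your proposal is correct and follows essentially the same route as the paper: specialize Lemma~\ref{lemma:Bernhart} to the diagonal (the paper takes $n\ge t$ directly, you take $n=t$ and note $B'_{t-1}(t)=B'_{t-1}(t-1)$, which amounts to the same reduction), translate the recurrence into the ODE $F'(x)+F(x)=\exp(\exp(x)-1)$, and solve with the initial condition at $x=0$. The only difference is cosmetic — you solve the ODE via an integrating factor, while the paper simply verifies that $\exp(\exp(x)-1-x)$ satisfies it.
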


\begin{proof}
Taking $t \in \N$ and $n \ge t$, Lemma~\ref{lemma:Bernhart} gives
$B'_t(t) +B'_{t-1}(t-1) = B_{t-1}(t-1)$. It follows that if
$F(x) = \sum_{t=0}^\infty B'_t(t)x^t/t!$ is the exponential
generating function for $B'_t(t)$ then
\[ F(x) + F'(x) = \sum_{t=0}^\infty \frac{B_t(t)}{t!}x^t = \exp(\exp(x) - 1). \]
Since $\exp(\exp(x)-1-x)$ solves this differential equation, and agrees
with~$F$ when $x=0$, we
have $F(x) = \exp(\exp(x) -1 -x)$.
\end{proof}

Thus $\sum_{t=0}^\infty B'_t(t) x^t/t! = \exp(-x) \sum_{t=0}^\infty B_t(t) x^t/t!$.
As an immediate corollary we obtain  $B'_t(t) = \sum_{s=0}^t \binom{t}{s} (-1)^{t-s} B_s(s)$
and $B_t(t) = \sum_{s=0}^t \binom{t}{s} B'_s(s)$; these formulae are related by binomial inversion.
Using this formula for $B'_t(t)$
and the standard result
\begin{equation} \label{eq:BExp} B_t(t) = \frac{1}{\mathrm{e}} \sum_{j=0}^\infty
\frac{j^t}{j!} \end{equation}
(see for example \cite[Equation (1.41)]{Wilf}, or
sum Equation~\eqref{eq:stirling} below over all $n \in \N_0$)
we obtain 
\begin{equation} \label{eq:BprimedExp} B'_t(t) =
\frac{1}{\mathrm{e}} \sum_{j=0}^\infty \frac{(j-1)^t}{j!}. \end{equation}
This equation appears to offer the easiest route to the asymptotics of $B_t(t)'$.
Let $W(t)$ denote Lambert's $W$ function,
defined for $x \in \R^{\ge 0}$ by the equation \hbox{$W(x)\mathrm{e}^{W(x)} = x$}.

\begin{corollary}\label{cor:Bernhart}
We have
\[ \frac{B'_t(t)}{B_t(t)} \sim \frac{W(t)}{t} \text{ as $t \rightarrow \infty$.} \]
\end{corollary}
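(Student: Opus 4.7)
The plan is to apply the saddle-point method to the Dobi\'nski-type series \eqref{eq:BExp} and \eqref{eq:BprimedExp}. Writing $a_j = j^t/j!$, we have
\[ \frac{B'_t(t)}{B_t(t)} = \frac{\sum_{j \ge 1} (1-1/j)^t\, a_j}{\sum_{j \ge 0} a_j} = \E\bigl[(1-1/J)^t\bigr], \]
where $J$ is the random variable on $\N_0$ with mass function proportional to $a_j$. My strategy is to show that $J$ concentrates on a narrow window around a saddle point $j^\star$, that on this window $(1-1/j)^t$ is uniformly close to $W(t)/t$, and hence that the expectation is $(W(t)/t)(1+o(1))$.

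By Stirling, $\log a_j \sim t\log j - j\log j + j$, whose critical point $j^\star$ satisfies $j^\star\log j^\star = t$, giving $j^\star = t/W(t)$ and $\log j^\star = W(t)$ from the defining relation $W(t)\e^{W(t)} = t$. The second derivative of $\log a_j$ at $j^\star$ is of order $W(t)^2/t$, so the local Gaussian approximation has standard deviation $\sigma_t \sim \sqrt{t}/W(t)$. A short Taylor expansion yields, for $j = j^\star(1+\eta)$,
\[ t\log(1-1/j) = -W(t)/(1+\eta) + O\bigl(W(t)^2/t\bigr), \]
so that $(1-1/j)^t = \e^{-W(t)}(1+o(1)) = (W(t)/t)(1+o(1))$ uniformly in the window $|j-j^\star| \le R_t\sigma_t$, provided $R_tW(t) = o(\sqrt{t})$.

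The final ingredient is the concentration of $J$. Standard saddle-point tail estimates for Bell-like series (cf.\ \cite[\S 5.2]{Wilf}) give $\P(|J-j^\star| > R\sigma_t) \le C\e^{-R^2/3}$ for moderate $R$; any $R_t$ with $R_t^2 \gg \log t$ and $R_tW(t) = o(\sqrt{t})$ therefore makes this tail $o(W(t)/t)$. Splitting $\E[(1-1/J)^t]$ over the window and its complement, and using the trivial bound $(1-1/J)^t \le 1$ on the latter, yields $\E[(1-1/J)^t] \sim W(t)/t$, which is the required conclusion. The main technical point is that the two constraints on $R_t$ are simultaneously satisfiable exactly because $W(t) = O(\log t)$: since $W(t)^3 = o(t)$, one has $\log t \ll t/W(t)^2$, leaving room for $R_t$ between $\sqrt{\log t}$ and $\sqrt{t}/W(t)$. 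Calibrating this balance to extract a Gaussian-quality (rather than merely polynomial) tail decay for $J$ is the principal obstacle.
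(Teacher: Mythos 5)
Your argument is correct in outline and rests on the same underlying method as the paper's proof, namely a saddle-point (Laplace) analysis of the Dobi\'nski-type series \eqref{eq:BExp} and \eqref{eq:BprimedExp} around $j^\star = \e^{W(t)} = t/W(t)$, but it is organized genuinely differently. The paper adapts the known asymptotic formula for the Bell numbers (the solution of Exercise 9.46 in \cite{CMath}) to the shifted series \eqref{eq:BprimedExp}, obtains a full asymptotic formula for $B'_t(t)$ with saddle $m(t)=\lfloor \exp W(t-1/2)\rfloor$, divides by the analogous formula for $B_t(t)$ to get $B'_t(t)/B_t(t)\sim 1/m(t)$, and finishes with $m(t)\sim t/W(t)$. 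You instead treat the ratio directly as $\E\bigl[(1-1/J)^t\bigr]$ for the tilted variable $J$ with weights $a_j=j^t/j!$, so you never need the full asymptotics of either quantity, only the location of the mode, the value $(1-1/j^\star)^t\sim\e^{-W(t)}=W(t)/t$, and a concentration estimate; that is a tidier route to the ratio, at the price of having to justify the concentration yourself rather than outsourcing it to \cite{CMath}.

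That concentration step is the one place your writeup is not yet a proof: the bound $\P(|J-j^\star|>R\sigma_t)\le C\e^{-R^2/3}$ is asserted with a pointer to \cite[\S 5.2]{Wilf}, which explains the saddle-point method for coefficient asymptotics but contains no such probabilistic tail inequality. The bound is true and can be supplied without much work: the weights are log-concave, since $a_{j+1}a_{j-1}/a_j^2=(1-j^{-2})^t\,j/(j+1)<1$, so once the ratio $a_{j+1}/a_j=(1+1/j)^t/(j+1)$ has dropped below a fixed constant $<1$ at distance of order $R\sigma_t$ from the mode (which your second-derivative computation provides), the terms decay at least geometrically thereafter; this gives both the moderate-deviation bound you quote and control of the far tail $j\ge 2j^\star$. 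The far tail does need to be covered, because $(1-1/j)^t$ is close to $1$ there, so your trivial bound on the complement genuinely requires the upper-tail probability to be $o(W(t)/t)$ -- a point your splitting handles correctly once the tail bound is in place. Two minor remarks: the $j=0$ term of \eqref{eq:BprimedExp} equals $(-1)^t/\e$ and is silently dropped when you pass to the expectation (harmless, being $\O(1/B_t(t))$ relative to $B_t(t)$), and your final feasibility check for $R_t$ (possible because $W(t)\le\log t$, so $\sqrt{\log t}\ll\sqrt t/W(t)$) is exactly right.
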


\begin{proof}
Let $m(t) = \lfloor \exp W(t-1/2) \rfloor$. The solution to Exercise~9.46 in \cite{CMath}
can easily be adapted to show
that
\[ B_t(t)' = \e^{m(t)-t-1/2}m(t)^{t-1} \sqrt{\frac{m(t)}{m(t)+t}}
\bigl( 1 + \O(t^{-1/2} \log t) \bigr).\]
Comparing with the analogous formula for $B_t(t)$ proved in \cite{CMath} we obtain
$B'_t(t)/B_t(t) \sim 1/m(t)$ as $t \rightarrow \infty$.
We now use the fact that
\[ m(t) = \exp W(t-1/2) = \frac{t-1/2}{W(t-1/2)} \sim \frac{t}{W(t)} \text{ as $t \rightarrow \infty$}, \]
where the final asymptotic equality follows from the elementary bounds $\log t - \log\log t \le W(t) \le \log t$.
\end{proof}


We also obtain the following result,
 which is proved bijectively in \cite[\S 3.5]{Bernhart}.

\begin{corollary}\label{cor:Bernhart2}
If $t\in\N_0$ then $B'_t(t)$ is the number of set partitions of $\{1,\ldots,t\}$ into parts
of size at least two.
\end{corollary}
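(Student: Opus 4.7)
The plan is to identify $B'_t(t)$ via its exponential generating function. Proposition~\ref{prop:Bernhart} gives
\[ \sum_{t=0}^\infty \frac{B'_t(t)}{t!} x^t = \exp\bigl(\exp(x) - 1 - x\bigr), \]
so it suffices to show that the right-hand side is also the exponential generating function of the sequence counting set partitions of $\{1,\ldots,t\}$ into parts of size at least two.

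For this I would invoke the exponential formula for labelled combinatorial structures. A single nonempty part on a set of size $k \ge 2$, viewed as the trivial labelled structure, has EGF $\sum_{k \ge 2} x^k/k! = \exp(x) - 1 - x$. A set partition into parts of size at least two is an unordered disjoint assembly of such parts covering $\{1,\ldots,t\}$, so by the exponential formula its EGF is $\exp\bigl(\exp(x) - 1 - x\bigr)$. Comparing coefficients of $x^t/t!$ yields the corollary.

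Alternatively, one could avoid generating function machinery by a direct inclusion-exclusion on singleton parts. If $a_t$ denotes the number of set partitions of $\{1,\ldots,t\}$ into parts of size at least two, then splitting a general set partition of $\{1,\ldots,t\}$ into its singletons together with the partition induced on the remaining elements gives $B_t(t) = \sum_{s=0}^t \binom{t}{s} a_{t-s}$, and binomial inversion recovers exactly the formula $B'_t(t) = \sum_{s=0}^t \binom{t}{s} (-1)^{t-s} B_s(s)$ noted immediately after Proposition~\ref{prop:Bernhart}. Either way the argument is very short, and I do not foresee any real obstacle beyond recalling the exponential formula.
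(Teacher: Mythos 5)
Your main argument is exactly the paper's proof: it combines Proposition~\ref{prop:Bernhart} with the exponential formula (the paper cites this as Theorem~3.11 of Wilf) to identify $\exp\bigl(\exp(x)-1-x\bigr)$ as the exponential generating function for set partitions with all parts of size at least two. Your alternative via singleton-splitting and binomial inversion is a correct minor variant, but it still rests on the identity $B'_t(t)=\sum_{s=0}^t\binom{t}{s}(-1)^{t-s}B_s(s)$ that the paper extracts from the same proposition, so it is not a genuinely different route.
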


\begin{proof}
Since $\exp(x)-1-x$ is the exponential
generating function enumerating sets of size at least two, the corollary
follows from Proposition~\ref{prop:Bernhart} using \cite[Theorem~3.11]{Wilf}.
\end{proof}

It is worth noting that this corollary does not extend to the Stirling numbers $\stir{t}{n}'$.
For example, if $t \ge 2$ then $\stir{t}{1}' = 0$, whereas the unique
set partition of $\{1,\ldots, t\}$ into a single part obviously has all parts of size at least $2$.

\subsection{Generating functions and asymptotics for $\stir{t}{n}'$ and $B'_t(n)$}

Let $t$, $n \in \N$. By definition we have $\stir{t}{n}' = B'_t(n) - B'_{t}(n-1)$. Provided
$n \ge 2$, Lemma~\ref{lemma:Bernhart} applies to both summands, and we obtain
$\stir{t}{n}' + \stir{t-1}{n}' = \stir{t-1}{n-1}$.
From the ordinary generating function
$\sum_{t=0}^\infty \stir{t}{n} x^t = x^n \prod_{j=1}^n 1/(1-jx)$
 (see for example \cite[(1.36)]{Wilf}) we now get
\begin{equation}\label{eq:gfstir'tn} \sum_{t=0}^\infty \stir{t}{n}' x^t = \frac{x^{n}}{1+x} \prod_{j=1}^{n-1} \frac{1}{1-jx}
\end{equation}
for $n \ge 2$. (When $n=1$ we have $\stir{t}{1}' = 0$ for all $t \in \N$, so the generating function
is zero.)
A simple residue calculation now shows that provided $n \ge 3$, we have
\[ \stir{t}{n}' \sim \frac{(n-1)^{t}}{n!} \quad\text{as $t \rightarrow \infty$}.\]
(When $n=2$ we have $\stir{t}{2}' = 1$ if $t$ is even, and $\stir{t}{2}' = 0$ if $t$ is odd.)
It easily follows that same asymptotic relation holds for $B'_t(n)$. Thus if $n \ge 3$ then
$\frac{1}{n}\stir{t}{n-1}$, $\stir{t}{n}'$, $\frac{1}{n}B_t(n-1)$
and $B'_t(n)$ are all asymptotically equal to
$(n-1)^t/n!$ as $t \rightarrow \infty$. Moreover, by calculating all residues
in~\eqref{eq:gfstir'tn} we obtain
the explicit formula
\begin{equation}
\label{eq:stirlingprime}  
\stir{t}{n}' = \frac{1}{n!}\sum_{k=0}^{n-1} (-1)^k\binom{n}{k} (n-k-1)^t + \frac{(-1)^{n+t}}{n!}
\end{equation}
valid for $n \ge 2$.
This formula may be compared with the well known identity
\begin{equation} \label{eq:stirling}
\stir{t}{n} = \frac{1}{n!}\sum_{k=0}^{n}(-1)^k \binom{n}{k} (n-k)^t ,
 \end{equation}
valid for all $t$, $n \in \N_0$,
which has a short direct proof using the Principle of Inclusion and
Exclusion. When $n\ge 3$, an easy corollary of \eqref{eq:gfstir'tn} is the
recurrence $\stir{t}{n}' = \stir{t-1}{n-1}' + (n-1)\stir{t-1}{n}'$,
analogous
to the well known $\stir{t}{n} = \stir{t-1}{n-1} + n\stir{t-1}{n}$.
However, while the recurrence for $\stir{t}{n}$ has a very simple bijective proof,
the authors know of no such proof for the recurrence for $\stir{t}{n}'$.



\subsection{Generating function and asymptotics of $B^\dagger_t(t)$}

The exponential generating function enumerating non-empty sets with an even number of their
elements marked is $\mfrac{1}{2}(\exp 2x - 1)$. Hence, by \cite[Theorem~3.11]{Wilf},
we have
\[ \sum_{n=0}^\infty \frac{B^\dagger_t(t)}{t!}x^t = \exp \bigl( \mfrac{1}{2} (\exp 2x - 1) \bigr). \]
Since there are $2^{t-n}$ ways to mark the elements of a set partition of $\{1,\ldots, t\}$
into $n$ parts so that an even number of elements in each part are marked, we have $\stir{t}{n}^\dagger
= 2^{t-n}\stir{t}{n}$. Using this, a routine adaption of the
proof of \cite[Equation 1.41]{Wilf} shows that
\begin{equation}
\label{eq:BdaggerExp} B^\dagger_t(t) =\frac{1}{\sqrt{e}} 
\sum_{j=0}^\infty \frac{(2j)^t}{2^jj!}.
\end{equation}
The method used to prove Corollary~\ref{cor:Bernhart} then shows that
\[ B^\dagger_t(t) = 2^t \e^{\ell(t)-t}\ell(t)^{t} \sqrt{\frac{\ell(t)}{\ell(t)+t}}
\bigl( 1 + \O( t^{-1/2} \log t) \bigr) \]
where $\ell(t)$ is defined by the equation $(\log \ell(t) + \log 2)\ell(t) = t- 1/2$.


\subsection{Relating $B^\dagger_t(n)$ to $B^{\dagger\prime}_t(n)$ and the asymptotics
of $B_t^{\dagger\prime}(n)$}

Lemma~\ref{lemma:Bernhart} has the following analogue in Type~B.

\begin{lemma}\label{lemma:relationshipB}
For $t$, $n\in\N$ we have
\[
B_t^{\dagger\prime}(n) + B_{t-1}^{\dagger\prime}(n)= \sum_{s=0}^{t-1} \binom{t-1}{s} B^\dagger_{s}(n-1).
\]
\end{lemma}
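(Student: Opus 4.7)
The plan is to adapt the character-theoretic argument of Lemma~\ref{lemma:Bernhart} to Type~B. By Theorem~\ref{thm:mainB} and Lemma~\ref{lemma:branchingB}, the left-hand side equals $\langle \theta^{\dagger t} + \theta^{\dagger(t-1)}, 1_{\BSym_n}\rangle$; writing $\pi^\dagger_{n-1}$ for the analogue of $\pi^\dagger$ in $\BSym_{n-1}$, namely $1_{\BSym_{n-2}}\ind^{\BSym_{n-1}}$, the same results give $B^\dagger_s(n-1) = \langle (\pi^\dagger_{n-1})^s, 1_{\BSym_{n-1}}\rangle$.

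First I would combine the two summands on the left using the identity $\theta^\dagger + 1_{\BSym_n} = \pi^\dagger$ to obtain
\[ \langle \theta^{\dagger(t-1)} \pi^\dagger, 1_{\BSym_n}\rangle, \]
and then apply Frobenius reciprocity, together with $\pi^\dagger = 1_{\BSym_{n-1}}\ind^{\BSym_n}$ and the fact that restriction is a ring homomorphism, to reduce this to
\[ \langle \bigl(\theta^\dagger\res_{\BSym_{n-1}}\bigr)^{t-1}, 1_{\BSym_{n-1}}\rangle. \]

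The key step is the computation of $\theta^\dagger\res_{\BSym_{n-1}}$. Since $\pi^\dagger$ is afforded by the natural action of $\BSym_n$ on $\{\pm 1, \ldots, \pm n\}$, and since $\BSym_{n-1}$ is the pointwise stabilizer of $\{-1,1\}$ acting naturally on $\{\pm 2, \ldots, \pm n\}$, the orbit decomposition gives
\[ \pi^\dagger\res_{\BSym_{n-1}} = 2 \cdot 1_{\BSym_{n-1}} + \pi^\dagger_{n-1}, \qquad \theta^\dagger\res_{\BSym_{n-1}} = 1_{\BSym_{n-1}} + \pi^\dagger_{n-1}. \]
Expanding $(1_{\BSym_{n-1}} + \pi^\dagger_{n-1})^{t-1}$ by the binomial theorem and pairing term-by-term with the trivial character yields the stated sum.

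There is no serious obstacle. The Type~A proof of Lemma~\ref{lemma:Bernhart} worked because $\theta\res_{\Sym_{n-1}}$ is itself a permutation character; in Type~B the analogous restriction carries an extra trivial summand, and it is precisely this summand that converts the single term $B_{t-1}(n-1)$ of Lemma~\ref{lemma:Bernhart} into the binomial sum on the right-hand side here.
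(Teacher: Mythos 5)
Your proposal is correct and follows essentially the same route as the paper: rewrite the left-hand side as $\langle (\theta^\dagger)^{t-1}\pi^\dagger, 1_{\BSym_n}\rangle$ using $\theta^\dagger + 1_{\BSym_n} = \pi^\dagger$, apply Frobenius reciprocity and multiplicativity of restriction, establish $\theta^\dagger\!\res_{\BSym_{n-1}} = 1_{\BSym_{n-1}} + \pi^\dagger_{(n-1)}$, and expand binomially. The only divergence is that you justify this restriction identity by identifying $\pi^\dagger$ with the permutation character of the natural action of $\BSym_n$ on the $2n$ signed points and counting $\BSym_{n-1}$-orbits (two fixed points plus one transitive orbit), whereas the paper derives it from the Type~B Branching Rule applied to the three irreducible constituents of $\pi^\dagger$; both computations give the same identity, so this is a minor, slightly more elementary variation rather than a different method.
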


\begin{proof}
Let $\pi^\dagger_{(n)} = 1_{\BSym_{n-1}}\!\ind^{\BSym_n}$ and
 $\theta^\dagger_{(n)} = \pi^\dagger_{(n)} - 1_{\BSym_n}$.
By Lemma~\ref{lemma:branchingB},
\[ B_t^{\dagger\prime}(n) + B_{t-1}^{\dagger\prime}(n)
= \bigl\langle \bigl( \theta^\dagger_{(n)} \bigr)^{t-1}
\pi^\dagger_{(n)}, 1_{\BSym_n} \bigr\rangle. \]
Recall from \S4.2 that 
\[ \pi^\dagger_{(n)} = 
\chi^{((n) \sep \varnothing)} +  \chi^{( (n-1,1) \sep \varnothing)}
 + \chi^{( (n-1) \sep (1) )}. \]
By the Branching Rule for $\BSym_n$ stated in Lemma \ref{lemma:branchingB} we have
\[
\theta^\dagger_{(n)} \Res_{\BSym_{n-1}} =
 2\chi^{((n-1) \sep \varnothing )} + \chi^{( (n-2,1) \sep \varnothing )} + \chi^{((n-2) \sep (1))} \\
 = \pi^\dagger_{(n-1)} + 1_{\BSym_{n-1}}.
 \]
%
A straightforward calculation using Lemma~\ref{lemma:branchingB} now shows that
\begin{align*}
B_t^{\dagger\prime}(n) + B_{t-1}^{\dagger\prime}(n) &= \bigl\langle
(\theta^\dagger_{(n)})^{t-1} \pi_{(n)}, 1_{\BSym_n} \bigr\rangle \\
&= \bigl\langle \bigl( \theta^\dagger_{(n)} \Res_{\BSym_{n-1}}\bigr)^{t-1}
\Ind^{\BSym_n}, 1_{\BSym_n} \bigr\rangle \\
&= \bigl\langle \bigl( \pi^\dagger_{(n-1)}
+ 1_{\BSym_{n-1}} \bigr)^{t-1} \Ind^{\BSym_n}, 1_{\BSym_n} \bigr\rangle \\
&= \bigl\langle \bigl( \pi^\dagger_{(n-1)} + 1_{\BSym_{n-1}} \bigr)^{t-1}, 1_{\BSym_{n-1}}
\bigr\rangle \\
&= \sum_{s=0}^{t-1} \binom{t-1}{s} \bigl\langle (\pi^\dagger_{(n-1)})^s, 1_{\BSym_{n-1}}
\bigr\rangle,
\end{align*}
and the result follows.
\end{proof}

In order to state a Type B analogue of Proposition~\ref{prop:star}, we define
a further family of Type B Stirling numbers by  $\stir{t}{n}^{\dagger\star}
= \stir{t}{n}^{\dagger\prime} + \stir{t-1}{n}^{\dagger\prime}$ for $t \in \N$ and $n \in \N_0$.

\begin{proposition}\label{prop:starB}
For $t \in \N$ and $n\in \N_0$,  $\stir{t}{n}^{\dagger\star}$ is equal to the number of marked set partitions of $\{1,\ldots, t\}$ into $n$ parts, such that an even number of elements of each part are marked, and such that if $i$ and $i+1$ are in the same part then $i+1$ is marked.
\end{proposition}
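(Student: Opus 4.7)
The plan is to mimic the bijective argument used in the proof of Proposition~\ref{prop:star}. Since by definition $\stir{t}{n}^{\dagger\star} = \stir{t}{n}^{\dagger\prime} + \stir{t-1}{n}^{\dagger\prime}$, it suffices to identify the marked set partitions satisfying the conditions of the proposition but \emph{not} counted by $\stir{t}{n}^{\dagger\prime}$, and to exhibit a bijection between these ``extra'' ones and the marked set partitions enumerated by $\stir{t-1}{n}^{\dagger\prime}$.

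The sole additional condition imposed by $\stir{t}{n}^{\dagger\prime}$, beyond those in the statement of the proposition, is that $1$ be marked whenever it shares a part with $t$. Hence the extras are precisely those marked set partitions in which $1$ and $t$ lie in a common part $P$ with $1$ unmarked. I would immediately observe that in this case the consecutive condition applied at $i = t-1$ forces $t$ to be marked whenever $t-1 \in P$; this small fact will be the pivot of the whole argument.

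The bijection I propose sends such an extra partition $\mathcal{P}$ to the marked set partition $\mathcal{P}'$ of $\{1,\ldots,t-1\}$ obtained by deleting $t$ from $P$ and then marking $1$ if and only if $t$ was marked in $\mathcal{P}$. Since $|P| \ge 2$, this yields an $n$-part set partition; because $1$ was initially unmarked, the change of mark on $1$ exactly compensates for the (possible) loss of a mark when $t$ is removed, so the parity of marks in each part is preserved. The inverse adds $t$ to the part of $\mathcal{P}'$ containing $1$, marks $t$ according to the mark on $1$, and then unmarks $1$.

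The main step requiring care, and the one I expect to be the chief obstacle, is verifying that $\mathcal{P}'$ satisfies the condition of $\stir{t-1}{n}^{\dagger\prime}$ that $1$ be marked whenever $1$ and $t-1$ share a part. This follows from the pivotal observation above: if $t-1$ and $1$ share a part in $\mathcal{P}'$, then $t-1 \in P$ in $\mathcal{P}$, so $t$ was marked in $\mathcal{P}$, so $1$ is marked in $\mathcal{P}'$. The consecutive conditions at $i = 1,\ldots,t-2$ transfer cleanly since only the mark on~$1$ has changed and $1$ appears only on the left of any such constraint; a symmetric verification shows that the inverse map sends $\stir{t-1}{n}^{\dagger\prime}$-partitions to valid extras, completing the bijection.
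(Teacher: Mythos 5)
Your proof is correct and follows essentially the same route as the paper: you identify the extra partitions as those with $1$ and $t$ in a common part with $1$ unmarked, and your map (delete $t$, marking $1$ exactly when $t$ was marked) is precisely the paper's bijection ``transfer the mark from $t$ to $1$, then delete $t$,'' with the verification of the parity and adjacency conditions spelled out in slightly more detail than the paper bothers to.
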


\begin{proof}
By definition of $\stir{t}{n}^{\dagger\prime}$, the marked set partitions of $\{1,\ldots,t\}$ into
exactly $n$ parts counted by $\stir{t}{n}^{\dagger\star}$ but not by $\stir{t}{n}^{\dagger\prime}$
are those with $1$ and $t$ in the same part, but with $1$ unmarked. The following procedure defines a  bijection between these partitions and those counted by $\stir{t-1}{n}^{\dagger\prime}$: if $t$ is marked then transfer the mark to $1$; then delete~$t$.
\end{proof}

Continuing the analogy with Type A, we now use
Lemma~\ref{lemma:relationshipB} to get the generating function for $B^{\dagger\prime}_t(t)$.

\begin{proposition}\label{prop:BdaggerprimedGF}
\[ \sum_{t=0}^\infty \frac{B^{\dagger\prime}_t(t)}{t!}x^t
= \exp \bigl( \mfrac{1}{2}(\exp 2x-1-2x) \bigr).\]
\end{proposition}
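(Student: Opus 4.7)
The plan is to follow the template used in the proof of Proposition~\ref{prop:Bernhart}, using Lemma~\ref{lemma:relationshipB} in place of Lemma~\ref{lemma:Bernhart}. Set
\[ F(x) = \sum_{t=0}^\infty \frac{B^{\dagger\prime}_t(t)}{t!}x^t, \qquad G(x) = \sum_{t=0}^\infty \frac{B^\dagger_t(t)}{t!}x^t = \exp\bigl( \mfrac{1}{2}(\exp 2x - 1)\bigr), \]
where the closed form for $G$ was established in \S 6.3. The first step is to specialize Lemma~\ref{lemma:relationshipB} to the case $n=t$. Since a marked set partition of $\{1,\ldots,t-1\}$ has at most $t-1$ parts, we have $B^{\dagger\prime}_{t-1}(t) = B^{\dagger\prime}_{t-1}(t-1)$, and likewise $B^{\dagger}_s(t-1) = B^{\dagger}_s(s)$ for every $s \le t-1$. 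The lemma therefore reads
\[ B^{\dagger\prime}_t(t) + B^{\dagger\prime}_{t-1}(t-1) = \sum_{s=0}^{t-1} \binom{t-1}{s} B^\dagger_s(s) \]
for every $t \in \N$.

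Next I would multiply this identity by $x^{t-1}/(t-1)!$ and sum over $t \ge 1$. The left-hand side assembles into $F'(x) + F(x)$. On the right-hand side, reindexing $m=t-1$ gives a binomial convolution of the sequence $(B^\dagger_s(s))_{s \ge 0}$ with the constant sequence~$1$, so the right-hand side is $G(x)\mathrm{e}^x$. Substituting the closed form for $G$ produces the linear first-order ordinary differential equation
\[ F'(x) + F(x) = \exp\bigl( \mfrac{1}{2}(\exp 2x - 1) + x \bigr), \qquad F(0)=1. \]

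The final step is to verify that the claimed generating function solves this initial value problem. Writing $\Phi(x) = \exp\bigl(\mfrac{1}{2}(\exp 2x - 1 - 2x)\bigr)$, one computes $\Phi'(x) = \Phi(x)(\exp 2x - 1)$, so $\Phi'(x) + \Phi(x) = \Phi(x)\exp 2x$, which equals $\exp\bigl(\mfrac{1}{2}(\exp 2x - 1) + x\bigr)$ after combining exponents. Since $\Phi(0)=1=F(0)$ and the ODE has a unique solution, this gives $F(x) = \Phi(x)$ as required. The only real subtlety in the argument is the careful handling of the two ``stability'' identities $B^{\dagger\prime}_{t-1}(t) = B^{\dagger\prime}_{t-1}(t-1)$ and $B^\dagger_s(t-1) = B^\dagger_s(s)$ that allow Lemma~\ref{lemma:relationshipB} to be converted into a clean differential equation for $F$; once this is in hand, the rest is a direct computation.
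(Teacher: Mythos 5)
Your proposal is correct and follows essentially the same route as the paper: specializing Lemma~\ref{lemma:relationshipB} to large $n$ (the paper takes $n\ge t$, you take $n=t$ and spell out the stabilization identities), assembling the resulting recurrence into the differential equation $F'(x)+F(x)=\exp\bigl(\mfrac{1}{2}(\exp 2x-1)+x\bigr)$ with $F(0)=1$, and checking that $\exp\bigl(\mfrac{1}{2}(\exp 2x-1-2x)\bigr)$ solves it. Your explicit treatment of the binomial convolution as an EGF product and of the initial condition is a slightly more detailed write-up of the same argument.
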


\begin{proof}
Taking $t\in \N$ and $n \ge t$,
Lemma~\ref{lemma:relationshipB} gives $B^{\dagger\prime}_t(t) + B^{\dagger\prime}_{t-1}(t-1)
= \sum_{s=0}^{t-1} B^{\dagger\prime}_s(s)
\binom{t-1}{s}$. Observe that if $G(x) = \sum_{t=0}^\infty a_t x^t/t!$
then $x^kG(x)/k! = \sum_{t=0}^\infty a_t x^{t+k}\binom{t+k}{k}/(t+k)!$.
It follows that if
$H(x) = \sum_{t=0}^\infty B^{\dagger\prime}_t(t)x^t/t!$ then
\[ H(x) + H'(x) = \sum_{k=0}^\infty \frac{x^k}{k!} \sum_{t=0}^\infty \frac{B^\dagger_t(t)}{t!} x^t
= \exp \bigl(x+ \mfrac{1}{2} (\exp 2x-1)  \bigr). \]
Since $ \exp \bigl( \mfrac{1}{2}(\exp 2x-1-2x) \bigr)$ solves this differential equation,
and agrees with $H$ when $x=0$, we have $H(x) =\exp \bigl( \mfrac{1}{2}(\exp 2x-1-2x) \bigr)$.
\end{proof}

Thus $\sum_{t=0}^\infty B^{\dagger\prime}_t(t) x^t/t! = \exp(-x) \sum_{t=0}^\infty B^\dagger_t(t) x^t/t!$.
As an immediate corollary we get $B^{\dagger\prime}_t(t) = \sum_{s=0}^t \binom{t}{s} (-1)^{t-s}
B^\dagger_s(s)$
and $B^\dagger_t(t) = \sum_{s=0}^t \binom{t}{s} B^{\dagger\prime}_s(s)$;
again these formulae are related by binomial inversion.
Using this formula for $B^{\dagger\prime}_t(t)$
and~\eqref{eq:BdaggerExp},
we obtain
\begin{equation}
\label{eq:BdaggerprimedExp}
B^{\dagger\prime}_t(t) = 
\frac{1}{\sqrt{e}}
\sum_{j=0}^\infty \frac{(2j-1)^t}{2^j j!}.
\end{equation}
This equation has the same relationship to~\eqref{eq:BdaggerExp} as~\eqref{eq:BprimedExp}
has to~\eqref{eq:BExp}, and the same method used to prove Corollary~\ref{cor:Bernhart} gives
\[ B^{\dagger\prime}_t(t) = 2^{t-1/2} \e^{\ell(t)-t}\ell(t)^{t-1/2} \sqrt{\frac{\ell(t)}{\ell(t)+t}} \bigl( 1 + \O( t^{-1/2} \log t) \bigr)
\]
and
\[ \frac{B^{\dagger\prime}_t(t)}{B^\dagger_t(t)} \sim \frac{1}{\sqrt{2\ell(t)}}
\text{ as $t \rightarrow \infty$} \]
where $\ell(t)$ is as defined in \S6.3.

We also obtain
the expected analogue of Corollary~\ref{cor:Bernhart2}. The example of $\stir{t}{1}^{\dagger\prime}$
shows that, as before, this corollary does not extend to the corresponding Stirling numbers.

\begin{corollary}
If $t\in\N_0$ then $B^{\dagger\prime}_t(t)$ is the number of
set partitions of $\{1,\ldots,t\}$ into parts
of size at least two with an even number of their
elements marked.\hfill$\qed$
\end{corollary}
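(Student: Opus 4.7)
The plan is to mimic the proof of Corollary~\ref{cor:Bernhart2} exactly, using Proposition~\ref{prop:BdaggerprimedGF} in place of Proposition~\ref{prop:Bernhart} and the exponential formula \cite[Theorem~3.11]{Wilf}.

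The key computation is to identify $\frac{1}{2}(\exp 2x - 1 - 2x)$ as the exponential generating function enumerating non-empty sets of size at least two in which an even number of elements are marked. For a set of size $k \geq 1$, the number of ways to mark an even number of its elements is
\[ \sum_{j \text{ even}} \binom{k}{j} = 2^{k-1}, \]
so the exponential generating function for such marked sets of size at least two is
\[ \sum_{k=2}^\infty 2^{k-1}\frac{x^k}{k!} = \frac{1}{2}\sum_{k=2}^\infty \frac{(2x)^k}{k!} = \mfrac{1}{2}(\exp 2x - 1 - 2x). \]

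Applying \cite[Theorem~3.11]{Wilf} to this generating function then shows that
$\exp\bigl(\mfrac{1}{2}(\exp 2x - 1 - 2x)\bigr)$ is the exponential generating function enumerating set partitions of $\{1,\ldots,t\}$ all of whose parts have size at least two and in which each part has an even number of its elements marked. By Proposition~\ref{prop:BdaggerprimedGF}, this is precisely the exponential generating function for $B^{\dagger\prime}_t(t)$, and comparing coefficients of $x^t/t!$ gives the result. There is no substantive obstacle; the only small point to verify is the marking count $2^{k-1}$, which follows from the standard identity for even binomial sums.
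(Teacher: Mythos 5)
Your proof is correct and is essentially the paper's own argument: the paper likewise deduces the corollary from Proposition~\ref{prop:BdaggerprimedGF} and \cite[Theorem~3.11]{Wilf}, observing that $\mfrac{1}{2}\bigl(\exp(2x)-1-2x\bigr)$ is the exponential generating function for sets of size at least two with an even number of elements marked. The only difference is that you spell out the count $2^{k-1}$ of even-sized markings, which the paper takes as read.
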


\begin{proof}
As in the Type A case, this follows from  \cite[Theorem~3.11]{Wilf}
since $\mfrac{1}{2}\bigl( \exp(2x)-1-2x \bigr)$ is the exponential
generating function enumerating sets of size at least two with an even
number of their elements marked.
\end{proof}


\subsection{Relating $B^\dagger_t(n)$ to $B^{\dagger\prime}_t(n)$ and the asymptotics of $\stir{t}{n}^{\dagger}$, $\stir{t}{n}^{\dagger\prime}$,
$B'_t(n)$ and $B^{\dagger\prime}_t(n)$} 

%

Let $t$, $n \in \N$.
Since $\stir{t}{n}^\dagger = 2^{t-n}\stir{t}{n}$, Equation~\eqref{eq:stirling} implies that
\begin{equation}
\label{eq:StBformula}
\stir{t}{n}^\dagger =  \frac{1}{2^nn!}\sum_{k=0}^{n} (-1)^k\binom{n}{k} \bigl(2 (n-k)\bigr)^t.
\end{equation}
 Moreover,
the ordinary generating function for $\stir{t}{n}^\dagger$ is
\begin{equation}
\label{eq:StBgf}
\sum_{t=0}^\infty \stir{t}{n}^\dagger x^t = x^n \prod_{j=1}^{n} \frac{1}{1-2jx}
\end{equation}
and so $\stir{t}{n}^\dagger$ and $B^\dagger_t(n)$ are both asymptotically equal to
$(2n)^t/2^nn!$ as $t \rightarrow \infty$.
It follows from Lemma~\ref{lemma:relationshipB} that $\stir{t}{n}^{\dagger\prime}+
\stir{t-1}{n}^{\dagger\prime} = \sum_{s=0}^{t-1} \binom{t-1}{s} \stir{s}{n-1}^\dagger$
provided $n \ge 2$. Hence
\begin{equation}
\label{eq:StBdaggergf}
\sum_{t=0}^\infty \stir{t}{n}^{\dagger\prime} x^t = \frac{x^n}{1+x} \prod_{j=1}^{n} \frac{1}{1-(2j-1)x}
\end{equation}
provided $n \ge 2$. A residue calculation then shows that
\[
\stir{t}{n}^{\dagger\prime} \sim \frac{(2n-1)^{t}}{2^nn!} \quad\text{as $t \rightarrow \infty$}. 
\]
provided $n \ge 2$. (When $n=1$ we have $\stir{t}{1}^{\dagger\prime} = 1$ if $t$ is even and $\stir{t}{0}^{\dagger\prime} = 0$ if $t$ is odd.)
It easily follows that if $n \ge 2$ then
 $\stir{t}{n}^{\dagger\prime}$ and $B^{\dagger\prime}_t(n)$ are both asymptotically equal to
$(2n-1)^{t-1}/2^n n!$ as $t\rightarrow \infty$.
Moreover, by calculating all residues we obtain the explicit formula
\begin{equation}
\label{eq:StBdaggerformula} \stir{t}{n}^{\dagger\prime}
= \frac{1}{2^n n!}\sum_{k=0}^{n-1} (-1)^k\binom{n}{k} \bigl( 2(n-k)-1 \bigr)^t
+ \frac{(-1)^{n+t}}{2^n n!}
\end{equation}
valid for $n \ge 2$.
This formula has a striking similarity to~\eqref{eq:stirlingprime}.
We remark that an alternative derivation of~\eqref{eq:BdaggerprimedExp} is given by summing
this formula over all $n \in \N_0$, using the identity
\[ \sum_{n=0}^\infty \frac{(-1)^n}{2^nn!}\binom{n}{j} = \frac{1}{ \sqrt{e}}\frac{(-1)^j}{2^j j!}, \]
which can easily be proved by comparing coefficients of $x^j$ in
\[ \sum_{j=0}^\infty \sum_{n=0}^\infty \frac{(-1)^n}{2^nn!}\binom{n}{j}x^j
= \sum_{n=0}^\infty \frac{(-1)^n}{2^nn!} (1+x)^n = \exp \bigl( -\mfrac{1}{2}(1+x)
\bigr) = \frac{\exp(-\mfrac{1}{2}x)}{\sqrt{e}}.
\]
The recurrences for $\stir{t}{n}^\dagger$ and $\stir{t}{n}^{\dagger\prime}$ obtained from~\eqref{eq:StBgf}
and~\eqref{eq:StBdaggergf} are
$\stir{t}{n}^\dagger = \stir{t-1}{n-1}^\dagger + 2n \stir{t-1}{n-1}^\dagger$ for $n \ge 1$
and
$\stir{t}{n}^{\dagger\prime} = \stir{t-1}{n-1}^{\dagger\prime} + (2n-1)\stir{t-1}{n}^{\dagger\prime}$
for $n \ge 3$.

\subsection{Stirling numbers in Type D}
It is possible to define Stirling numbers for Type D, by
$\stir{t}{n}^\ddagger = \MD_t(n) - \MD_t(n-1)$ and
$\stir{t}{n}^{\ddagger\prime} = \MDp_t(n) -\MDp_t(n-1)$.
However, just as the set partitions corresponding to Type D shuffles have no apparent natural description, these Type D Stirling numbers (which take negative values in many cases) have no obvious combinatorial interpretation. We shall say nothing more about these quantities here. We
note however that there is an analogue
of Lemmas~\ref{lemma:Bernhart} and~\ref{lemma:relationshipB} in Type D, namely
\[M_t^{\ddagger\prime}(n) + M_{t-1}^{\ddagger\prime}(n)
= \sum_{s=0}^{t-1} \binom{t-1}{s} B^{\ddagger}_t(n) \]
for $t$, $n \in \N$. This has a similar proof to Lemma~\ref{lemma:relationshipB},
replacing Lemma~\ref{lemma:branchingB}
with  Lemma~\ref{lemma:branchingD}. We also note that $S^\dagger_t(n) = S^{\ddagger}_t(n)$
if $t < n$, since no sequence of $t$ oriented random-to-top shuffles leaving the deck
fixed can lift the bottom card, and evenly many card flips occur in any such sequence.
Moreover $S^\dagger_t(t) + 1 = S^{\ddagger}_t(t)$; the exceptional sequence
lifts the bottom card to the top $t$ times.

\section{Earlier work on shuffles and Kronecker powers, and an obstruction to an explicit bijection between the sequences counted by $B_t(n)$ and $M_t(n)$}

The connection between the random-to-top shuffle (or its inverse, the top-to-random shuffle)
and Solomon's descent algebra is well known. See for example
the remark on Corollary 5.1 in \cite{DFP}.

In \cite{GoupilChauve} the authors study the powers of the irreducible character $\chi^{(n-1,1)}$.
Suppose that $t \le n$. 
The special case $\lambda = \varnothing$ of Lemma 2 of \cite{GoupilChauve}
gives a bijection between the sequences of partition moves counted by $M'_t(n)$ and
permutations $\pi$ of $\{1,\ldots, t\}$ such that every cycle of $\pi$ is decreasing,
and $\pi$ has no fixed points. Such permutations are in bijection with set
partitions of $\{1,\ldots,t\}$ into non-singleton parts, and so, using the result of
Bernhart mentioned earlier, we obtain a bijective proof that $M'_t(t) = M'_t(n) = B'_t(n) = B'_t(t)$.
The restriction $t \le n$, which allows the use of the Bernhart result,
arises from the use of the RSK correspondence for oscillating tableaux,
and appears essential to the proof in \cite{GoupilChauve}.

In \cite{FulmanCardShuffling} Fulman defines, for each finite group $G$ and each subgroup $H$
of~$G$,
a Markov chain on the irreducible representations
of $G$. In the special case when $G = \Sym_n$ and $H = \Sym_{n-1}$
this gives a Markov chain $J$ on the set of partitions of $n$ in
which the transition probability from the partition $\lambda$ to the partition $\mu$ is
\[ \frac{M_1(\lambda,\mu)}{n} \frac{\chi^\mu(1)}{\chi^\lambda(1)} . \]
The relevant special case of his Theorem~3.1 is as follows.

\begin{proposition}[Fulman] 
Let $\lambda$ be a partition of $n$.
Starting at $(n)$, the probability that $J$ is at $\lambda$ after $t$ steps is equal to the
probability that a product 
of $t$ random-to-top shuffles 
is sent to a pair
of tableaux of shape $\lambda$ by the RSK correspondence.
\end{proposition}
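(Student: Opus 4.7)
The plan is to show that both probabilities equal $\chi^\lambda(1) \langle \pi^t, \chi^\lambda \rangle / n^t$, where $\pi$ is the natural permutation character of $\Sym_n$. First I would unwind the definition of $J$: summing over length-$t$ paths $(n) = \lambda_0, \lambda_1, \ldots, \lambda_t = \lambda$, the character-dimension ratios telescope to $\chi^\lambda(1)/\chi^{(n)}(1) = \chi^\lambda(1)$, so the probability that $J$ is at $\lambda$ after $t$ steps equals $\chi^\lambda(1) M_t((n), \lambda)/n^t$. Applying Lemma~\ref{lemma:branching} with $\chi^{(n)} = 1_{\Sym_n}$ turns this into $\chi^\lambda(1) \langle \pi^t, \chi^\lambda \rangle / n^t$.

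For the shuffle side, set $X = \sum_{m=1}^n \sigma_m \in \Q\Sym_n$, so the probability that the product of $t$ uniformly random random-to-top shuffles equals a specific $\sigma$ is $[\sigma] X^t/n^t$. Although $X$ itself does not lie in $\D(\Sym_n)$, the element $\Xi = \sum_m \sigma_m^{-1}$ from Lemma~\ref{lemma:SM} does, and a direct counting of shuffle sequences gives $[\sigma] X^t = [\sigma^{-1}] \Xi^t$. Since $\RSK(\sigma^{-1}) = (Q(\sigma), P(\sigma))$, the RSK shape of $\sigma^{-1}$ equals that of $\sigma$, and so summing over permutations of a fixed shape is invariant under $\sigma \mapsto \sigma^{-1}$:
\[ \sum_{\mathrm{sh}(\sigma) = \lambda} [\sigma] X^t = \sum_{\mathrm{sh}(\sigma) = \lambda} [\sigma] \Xi^t. \]
It therefore suffices to prove the identity $\sum_{\mathrm{sh}(\sigma) = \lambda} [\sigma] \Gamma = \chi^\lambda(1) \langle \phi, \chi^\lambda \rangle$ whenever $\Gamma \in \D(\Sym_n)$ maps to $\phi \in \Cl(\Sym_n)$, and apply it to $\Gamma = \Xi^t \mapsto \pi^t$.

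To prove this identity, expand $\Gamma = \sum_\alpha a_\alpha \Xi^\alpha$ in the canonical basis of $\D(\Sym_n)$ indexed by compositions $\alpha$ of $n$; then $\phi = \sum_\alpha a_\alpha \pi_\alpha$ where $\pi_\alpha = 1_{\Sym_\alpha}\!\ind^{\Sym_n}$, and $[\sigma] \Gamma$ depends on $\sigma$ only through its descent set. The calculation now assembles four classical pieces: (i) $\RSK$ preserves descents, so $\mathrm{des}(\sigma) = \mathrm{des}(Q(\sigma))$; (ii) for each standard Young tableau~$Q$ of shape~$\lambda$, there are exactly $f^\lambda = \chi^\lambda(1)$ permutations $\sigma$ with $Q(\sigma) = Q$; (iii) the number of standard Young tableaux of shape~$\lambda$ with descent set contained in the subset $I(\alpha)$ of $\{1,\ldots,n-1\}$ corresponding to $\alpha$ equals the Kostka number $K_{\lambda,\alpha}$; and (iv) Young's rule $\langle \pi_\alpha, \chi^\lambda \rangle = K_{\lambda,\alpha}$. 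Grouping the sum first by RSK pair $(P, Q)$ and then by $Q$, and applying (i)--(iv) in turn, yields the identity by direct bookkeeping.

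The hardest part will be assembling the classical ingredients (i)--(iv), none of which is established in the paper, into a single clean argument; once this is done, the remaining computation is essentially an elaboration of the proof of Lemma~\ref{lemma:SM}, with the Plancherel-style identity~\eqref{eq:iso} there replaced by its refinement that sums coefficients of $\Gamma$ across an entire RSK shape class.
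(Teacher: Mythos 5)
Your proposal is correct, but note that the paper does not prove this proposition at all: it is quoted verbatim as a special case of Theorem~3.1 of \cite{FulmanCardShuffling}, so there is no internal proof to compare against, and your argument is a genuinely self-contained alternative to citing Fulman. Your two computations are sound: on the chain side the dimension ratios telescope and the path count is exactly $M_t\bigl((n),\lambda\bigr)$, so Lemma~\ref{lemma:branching} gives $\chi^\lambda(1)\langle \pi^t,\chi^\lambda\rangle/n^t$; on the shuffle side the passage from $X^t$ to $\Xi^t$ via reversal and inverse-invariance of RSK shape is fine, and your key identity
\[
\sum_{\sh(\sigma)=\lambda} [\sigma]\Gamma \;=\; \chi^\lambda(1)\,\langle \phi, \chi^\lambda\rangle
\qquad (\Gamma \in \D(\Sym_n),\ \Gamma \mapsto \phi)
\]
is a shape-refined version of \eqref{eq:iso} --- indeed it specializes to \eqref{eq:iso} for $\lambda=(n)$, since the identity is the only permutation of RSK shape $(n)$ --- and it does follow from the classical ingredients (i)--(iv) you list: any $\Gamma$ in the descent algebra has coefficients constant on descent classes, $\D(\sigma)=\D(Q(\sigma))$, each recording tableau arises from $f^\lambda=\chi^\lambda(1)$ permutations of shape $\lambda$, standardization gives $\#\{Q : \D(Q)\subseteq I(\alpha)\}=K_{\lambda\alpha}$, and Young's rule converts $K_{\lambda\alpha}$ into $\langle \pi_\alpha,\chi^\lambda\rangle$. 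The only point to watch is the descent convention (whether $\D(\sigma)$ matches $Q(\sigma)$ or $P(\sigma)$ under your choice of RSK and of one-line notation), but since you group by one tableau and count the other freely, either convention yields the same bookkeeping. This route buys a transparent representation-theoretic proof in the spirit of Lemma~\ref{lemma:SM} and of the eigenvalue argument in \S\ref{sec:Phatarfod}, at the cost of importing standard RSK facts that the paper itself never develops; Fulman's cited theorem, by contrast, is proved in greater generality and is simply invoked here.
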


Fulman's theorem relates partitions
and shuffle sequences, but it is qualitatively different to Lemma~\ref{lemma:SM}.
In particular, we note that Lemma~\ref{lemma:SM} has no very natural probabilistic interpretation,
since choosing individual moves uniformly at random fails to give a uniform distribution on the
set of sequences of all partition moves from $(n)$ to $(n)$.

Despite this, it is very natural to ask whether the RSK correspondence can be used
to give a bijective proof of Lemma~\ref{lemma:SM}, particularly in view of the related
RSK correspondence used in \cite{GoupilChauve}, and the fact
that if $\tau \in \Sym_n$ then the RSK shape of $\tau \sigma_m$
differs from the RSK shape of $\tau$ by a move. (We leave this remark
as a straightforward exercise.)
The following proposition
shows that, perhaps surprisingly, the answer to this question is negative.
The proof is a brute-force verification, which to make this paper self-contained
we present in Figure~1 above.

\begin{proposition}\label{prop:noseqs}
Let $\sh(\tau)$ denote the RSK shape of the permutation $\tau$ and
let
\[ (\lambda^{(0)}, \ldots, \lambda^{(8)}) =
\bigl( (5), (4,1), (3,2), (4,1), (3,2), (2,2,1), (3,2), (4,1), (5) \bigr). \]
There does not exist a sequence $(\tau_1, \ldots, \tau_8)$ of random-to-top shuffles such
that $\sh(\tau_1\ldots \tau_i) = \lambda^{(i)}$ for all $i \in \{0,1,\ldots,8\}$.\hfill$\qed$
\end{proposition}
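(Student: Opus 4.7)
The plan is a direct exhaustive enumeration, exploiting the small sizes $n = 5$ and $t = 8$. I build a search tree whose root is $\tau_0 = \mathrm{id}_{\Sym_5}$, which has $\sh(\tau_0) = (5) = \lambda^{(0)}$, and whose nodes at depth $i$ are exactly those products $\tau_i = \sigma_{m_1}\cdots\sigma_{m_i}$ satisfying $\sh(\tau_j) = \lambda^{(j)}$ for every $0 \le j \le i$. The proposition asserts that this tree is empty at depth $8$, which is a finite claim to be checked by direct computation.

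At level $1$ the candidates are precisely $\sigma_2,\sigma_3,\sigma_4,\sigma_5$: each of these has RSK shape $(4,1)$, whereas $\sigma_1 = \mathrm{id}$ has shape $(5)$. For each subsequent level $i \ge 2$, for every surviving node $\tau_{i-1}$ I would compute the five right multiplications $\tau_{i-1}\sigma_m$ for $m = 1,\ldots,5$, apply RSK to each, and keep only those whose shape equals $\lambda^{(i)}$. Iterating this for $i = 2,\ldots,8$ generates the complete tree, which can be laid out compactly in a diagram (this is what Figure~1 does).

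I expect the tightest pruning to occur at the transitions $\lambda^{(4)} \to \lambda^{(5)} \to \lambda^{(6)}$, that is $(3,2) \to (2,2,1) \to (3,2)$. Reaching a three-row shape by a single random-to-top shuffle requires a bumping cascade that propagates to the third row under RSK insertion, and returning to $(3,2)$ in one further shuffle is similarly constrained. Tracking the small set of nodes that survive these two transitions all the way down to depth~$8$ should exhibit the asserted obstruction.

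The main difficulty is bookkeeping rather than technique: with up to fivefold branching across eight levels, the naive bound on the candidate count is $5^8$, but the shape constraints prune the tree so aggressively that the entire verification fits into a single diagram, avoiding any conceptual difficulty.
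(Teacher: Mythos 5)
Your proposal is exactly the paper's own argument: a brute-force tree search from the identity, right-multiplying by $\sigma_1,\ldots,\sigma_5$ at each level, pruning by the prescribed RSK shape $\lambda^{(i)}$, and observing that no branch survives to depth $8$ --- this is precisely the verification the paper records in Figure~1 (where vertices are labelled by the inverse permutations, i.e.\ deck orders). The only caveat is that you defer the actual finite computation rather than exhibiting it, but since the statement is a finite check and the paper's proof is nothing more than carrying it out and displaying the resulting tree, your approach coincides with theirs.
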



\begin{figure}[t]
\includegraphics[width=\textwidth]{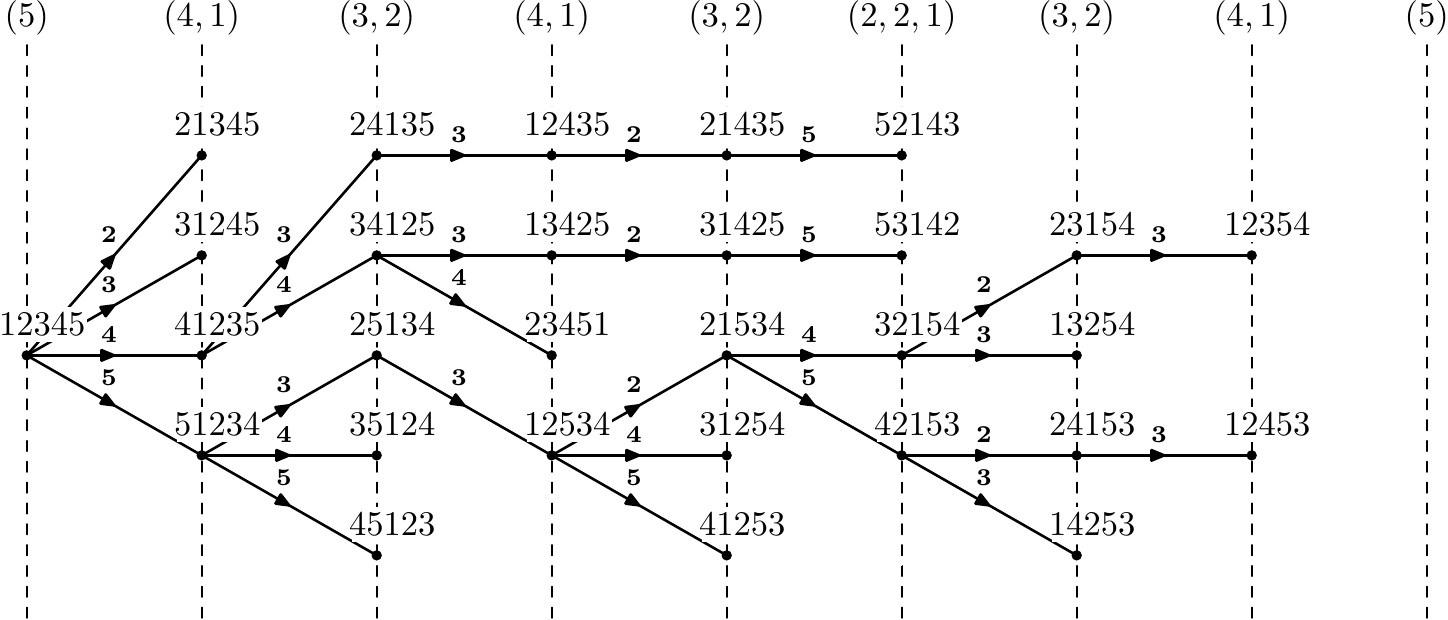}
\caption{All sequences $(\tau_1, \tau_2, \ldots, \tau_r)$ of random-to-top shuffles such
that $\sh(\tau_1,\ldots \tau_i) = \lambda^{(i)}$ for each $i \in \{0,1,\ldots, r\}$,
where $\lambda^{(1)}, \ldots, \lambda^{(8)}$ are as
defined in Proposition~\ref{prop:noseqs}. Labels on vertices show the
inverses of the permutations $\tau_1 \ldots \tau_i$ and so give the order of the deck;
an arrow labelled $m$ corresponds to the random-to-top shuffle $\sigma_m$.}
\end{figure}

Computer calculations show that if $t \le 7$ then taking shapes of RSK correspondents
gives a bijection proving Theorem~\ref{thm:main} whenever $n \le 12$ and that when $t =8$,
similar examples  to the one above
exists for $5 \le n \le 8$. Since a shuffle sequence counted by $S_t(n)$ can never move a
card that starts in position $t+1$ or lower, taking shapes of RSK correspondents
fails to give a bijective proof of Theorem~\ref{thm:main} whenever $t=8$ and $n \ge 5$.
It is therefore an open problem to find a bijective proof
of Theorem~\ref{thm:main} that deals with the case $t > n$ not covered by
Lemma 2 of \cite{GoupilChauve}. Finding bijective
proofs of Theorems~\ref{thm:mainB} and~\ref{thm:mainD} are also open problems.

\section{Occurrences of Type A and B Bell and Stirling numbers in the On-Line Encyclopedia of Integer Sequences}

The numbers $B'_t(t)$ appear in the On-Line Encyclopedia of Integer Sequences
(OEIS) \cite{OEIS} as sequence A000296; the interpretations
at the start of \S 6.1 and in Corollary~\ref{cor:Bernhart2} are both given. The sequences $B'_t(n)$ and
$\stir{t}{n}'$ as $t$ varies
appear for $n \le 3$, and the sequence $\stir{t}{4}'$ appears as A243869,
counting the number of set partitions of $\{1,\ldots,t\}$ into four parts, satisfying our condition.

The Type B Stirling numbers $\stir{t}{n}^\dagger$ appear
as sequence A075497; they are defined there by their characterization as ordinary Stirling numbers scaled by particular powers of $2$. They do not appear to have been connected before with Type~B Coxeter groups or descent algebras.

The other statistics introduced here for types B and D have not hitherto appeared in OEIS in any generality, and appear not to have been defined prior to this investigation. They appear in OEIS only for certain very particular choices of parameters, usually those for which the numbers have particularly simple expressions. For instance, we see from (\ref{eq:StBformula}) that $\stir{t}{1}^\dagger = 2^{t-1}$, and that $\stir{t}{2}^\dagger=2^{t-2}(2^{t-1}+1)$, giving sequences A000079 and A007582 respectively .

The sequence $B^\dagger_t(3)$ also appears, as sequence A233162. Let $C$ be a set of~$2n$ unlabelled colours arranged into $n$ pairs. Let $A$ be an $s\times t$ array of boxes. Define $Q_{s,t}(n)$ to be the number of ways of colouring $A$ with colours of $C$, so that no two horizontally or vertically adjacent boxes receive colours from the same pair. (Since the colours are unlabelled,
two colourings of $A$ that differ by a permutation of the $n$ pairs,
or by swapping colours within a pair, are regarded as the same.)
The sequence A233162 gives the numbers $Q_{1,t}(4)$.
However the coincidence with the Type B Bell numbers extends to any number of colours, as follows.
\begin{proposition}
$Q_{1,t}(n)=B^\dagger_{t-1}(n-1)$ for all $t\ge 1$.
\end{proposition}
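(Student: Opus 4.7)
My plan is to count equivalence classes of colourings of the $1\times t$ array by choosing a canonical representative in each orbit, and then to match the resulting count against the definition of $B^\dagger_{t-1}(n-1)$.

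First I would use the pair-permutation symmetry to send the pair occupying box~$1$ to pair~$1$, and then the flip within pair~$1$ to put the reference colour of that pair on top of box~$1$. The residual symmetry group is $\Sym_{n-1}\times C_2^{n-1}$, permuting the labels $\{2,\ldots,n\}$ and flipping inside those pairs. Renaming the remaining labels in their order of first appearance, the pair sequence $1=p_1,p_2,\ldots,p_t$ (with $p_i\ne p_{i+1}$) encodes a set partition of $\{1,\ldots,t\}$ into some number $m\le n$ of parts in which no two consecutive integers share a part; write $\stir{t}{m}''$ for the number of such partitions. Inside the part $P_1$ containing box~$1$ the colours on the $|P_1|-1$ non-anchor indices are rigid, giving $2^{|P_1|-1}$ orbit choices, while each of the other $m-1$ parts $P_j$ contributes $2^{|P_j|-1}$ orbits under the free $C_2$-flip on $\{0,1\}^{P_j}$. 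Multiplying and summing gives
\[ Q_{1,t}(n) = \sum_{m=1}^{n} \stir{t}{m}''\cdot 2^{t-m}. \]
Since $\stir{s}{k}^\dagger = 2^{s-k}\stir{s}{k}$ by \S6.4, we have $B^\dagger_{t-1}(n-1) = \sum_{m'=0}^{n-1}\stir{t-1}{m'}\cdot 2^{t-1-m'}$, so the substitution $m=m'+1$ reduces the proposition to the combinatorial identity $\stir{t}{m}'' = \stir{t-1}{m-1}$.

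To finish I would establish this identity by a bijection on restricted growth functions. The map sends an RGF $r_1\cdots r_{t-1}$ of maximum value $m-1$ to the sequence $v_1\cdots v_t$ defined by $v_1=1$ and $v_{i+1}=r_i+[r_i\ge v_i]$; a short induction shows that $v$ is a valid RGF of length $t$ with maximum value $m$, the spacedness $v_{i+1}\ne v_i$ is immediate from the two cases of the Iverson bracket, and the inverse is $r_i=v_{i+1}-[v_{i+1}>v_i]$. The step where I expect the most care to be needed is the orbit count above: one must verify that anchoring box~$1$ consumes exactly one colour degree of freedom in pair~$1$ (so that pair~$1$ contributes $2^{|P_1|-1}$ and not $2^{|P_1|}$ or $2^{|P_1|-2}$) and that the residual $C_2$-flips act without fixed points on each $\{0,1\}^{P_j}$, so that the factor $2^{|P_j|-1}$ really emerges uniformly across parts. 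Once these verifications are in place, the identity and the proposition follow at once.
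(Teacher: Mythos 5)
Your argument is correct, but it takes a genuinely different route from the paper. You count orbits by canonical representatives, which cleanly factors the answer as $Q_{1,t}(n)=\sum_{m=1}^{n}\stir{t}{m}^{\star}2^{t-m}$, where $\stir{t}{m}^{\star}$ counts the spaced partitions of $\{1,\ldots,t\}$ into $m$ parts (your $\stir{t}{m}''$ is exactly the quantity in Proposition~\ref{prop:star}); you then finish with the identity $\stir{t}{m}^{\star}=\stir{t-1}{m-1}$, proved by your restricted-growth-function bijection, together with $\stir{t-1}{m-1}^{\dagger}=2^{t-m}\stir{t-1}{m-1}$. The paper instead builds a single explicit bijection from colourings using exactly $n$ colour pairs to the marked set partitions of $\{1,\ldots,t-1\}$ into exactly $n-1$ parts counted by $\stir{t-1}{n-1}^{\dagger}$: the pair labels (ordered by first appearance) are compressed by the map $f(i)=c_i$ or $c_i-1$ into a partition of $\{1,\ldots,t-1\}$, and the marks are read off from which colour of each pair appears first, with a parity-completion rule on the least element of each block. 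Your approach is more modular and avoids handling marks at all — indeed you could shorten it further by citing Lemma~\ref{lemma:Bernhart} and Proposition~\ref{prop:star}, which already give $\stir{t}{m}^{\star}=\stir{t}{m}'+\stir{t-1}{m}'=\stir{t-1}{m-1}$, in place of your RGF bijection (which is nonetheless correct; I checked both directions and the spacedness and maximum-value claims). What the paper's proof buys in exchange is a single direct bijection onto the Type B objects themselves, which exhibits the marked-partition structure rather than recovering it through the scaling $\stir{s}{k}^{\dagger}=2^{s-k}\stir{s}{k}$ (stated in \S6.3). The delicate points you flag — that anchoring box $1$ uses up exactly one flip in its pair, and that each residual $C_2$-flip acts freely on $\{0,1\}^{P_j}$ — do check out, since flipping all coordinates of a nonempty binary string has no fixed points, so each orbit contains exactly one representative with the first box of every used pair given its reference colour.
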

\begin{proof}

Let the boxes be labelled $0,\dots, t-1$ from left to right. We shall show that the number of colourings using exactly $n$ colour pairs (though not necessarily using both colours in each pair) is
$\stir{t-1}{n-1}^\dagger$; this is the number of marked  partitions of a set of size $t-1$ into exactly $n-1$ parts. Since $B^\dagger_{t-1}(n-1)=\sum_{k=0}^{n-1}\stir{t-1}{k}^\dagger$,
this is enough to prove the proposition.

Suppose we are given a colouring of the boxes using exactly $n$ colour pairs, such that consecutive boxes receive colours from distinct pairs. We label the colour pairs with the numbers $1,\dots, n$ in order of their first appearance in the colouring, reading from left to right. Let $c_i$ be the label of the colour of box $i$. We define a function $f$ on $\{1,\dots,t-1\}$ by
\[
f(i) = \left\{\begin{array}{ll} c_i & \textrm{\ if\ } c_i<c_{i-1},\\ c_i-1 & \textrm{\ if\ } c_i>c_{i-1}.\end{array}\right.
\]
(By assumption, the case $c_i=c_{i-1}$ does not occur.) We notice that if the first occurrence of the colour pair $k$ occurs in box $i$, where $i>0$, then $c_j<k$ for all $j<i$. It follows that $f(i)=k-1$, and hence that the image of $f$ is $\{1,\dots, n-1\}$. Now the kernel congruence of $f$ defines a set partition of $\{1,\dots,t-1\}$ into exactly $n-1$ parts; it remains to determine the marks.

Within each colour pair, we now distinguish a marked and an unmarked colour: the marked colour is the one which first appears in our colouring. (It is not necessary that the unmarked colour should appear at all.) Let $X$ be a part of the partition just described, and let $x$ be the least element of~$X$. We add marks to the elements of $X$ as follows: for each element $y\in X$ other than $x$, we add a mark to $y$ if box $y$ receives a marked colour. Then we add a mark to $x$ if necessary to make the total number of marks in $X$ even.

Given a marked partition of $\{1,\dots,t-1\}$ into exactly $n-1$ parts, it is easy to reverse the procedure just described in order to reconstruct the unique colouring of $t$ boxes with $n$ unlabelled colour pairs with which it is associated. So we have a bijective correspondence, and the proposition is proved.
\end{proof}



\def\cprime{$'$} \def\Dbar{\leavevmode\lower.6ex\hbox to 0pt{\hskip-.23ex
  \accent"16\hss}D} \def\cprime{$'$}
\providecommand{\bysame}{\leavevmode\hbox to3em{\hrulefill}\thinspace}
\providecommand{\MR}{\relax\ifhmode\unskip\space\fi MR }
\providecommand{\MRhref}[2]{%
  \href{http://www.ams.org/mathscinet-getitem?mr=#1}{#2}
}
\providecommand{\href}[2]{#2}

\end{document}